\documentclass[12pt, reqno]{amsart}
\usepackage[utf8]{inputenc}

\usepackage{amssymb,latexsym}
\usepackage{amsfonts}
\usepackage{amsmath}
\usepackage{enumerate}
\usepackage{float}
\usepackage{mathtools}
\usepackage{enumerate}

\usepackage{booktabs}

\usepackage[usenames, dvipsnames]{xcolor}
\usepackage[colorlinks,linkcolor=blue,anchorcolor=blue,citecolor=blue]{hyperref}

\usepackage{graphicx}

\usepackage{caption}
\usepackage{subcaption}

\makeatletter
\@namedef{subjclassname@2020}{%
  \textup{2020} Mathematics Subject Classification}
\makeatother





\newcommand\D{{\mathbb D}}
\newcommand\pD{{\partial\mathbb D}}

\newcommand\N{{\mathbb N}}

\newcommand\BB{{\mathcal B}}
\newcommand\DD{{\mathcal D}}
\newcommand\GG{{\mathcal G}}

\newcommand\al{\alpha}
\newcommand\be{\beta}

\newcommand\ze{\zeta}
\newcommand\la{\lambda}
\newcommand\ka{\kappa}


\newcommand\var{\operatorname{var}}
\newcommand\dist{\buildrel d \over =}
\newcommand\abs[1]{\lvert#1\rvert}

\newcommand{\E}[1]{\mathbb{E}{#1}}

\newcommand{\vphi}{\varphi}
\def\leq{\leqslant}
\def\geq{\geqslant}

\newtheorem{theorem}{Theorem}

\newtheorem{corollary}[theorem]{Corollary}
\newtheorem{lemma}[theorem]{Lemma}
\newtheorem{proposition}[theorem]{Proposition}

\newtheorem{conjecture}[theorem]{Conjecture}

\theoremstyle{definition}
\newtheorem{remark}[theorem]{Remark}

\title[On $2\times2$ determinants for survival probabilities]{On $\mathbf{2\times2}$ determinants originating from survival probabilities in homogeneous discrete time risk model}

\author[A.Grigutis, J. Jankauskas]{Andrius Grigutis and Jonas Jankauskas}
\address{Institute of Mathematics, Vilnius university, Naugarduko 24, LT-03225 Vilnius, Lithuania}
\email{andrius.grigutis@mif.vu.lt}
\address{Institute of Mathematics, Vilnius university, Naugarduko 24, LT-03225 Vilnius, Lithuania}
\email{jonas.jankauskas@gmail.com}

\date{March 2021}

\subjclass[2020]{60G50, 60J80, 91G05}

\keywords{Discrete time risk model, random walk, ultimate time survival probability, recurrent sequences, initial values, determinants, generating functions}

\begin{document}

\maketitle

\begin{abstract}
    We analyse $2\times2$ Hankel-like determinants $D_n$ that arise in search of initial values for the ultimate time survival probability $\varphi(u)=\mathbb{P}\left(\cap_{n=1}^{\infty}\left\{W(n)>0\right\}\right)$ in homogeneous discrete time risk model $W(n)=u+\kappa n-\sum_{i=1}^nZ_i$, where $Z_i$ are non-negative integer valued i.i.d. random claims, the initial surplus $u \in \N_0$ and the income rate $\kappa=2$. We prove the asymptotic version of a recent conjecture on the non--vanishing and monotonicity of $D_n$ and derive explicit formulas for the initial values $\vphi(0)$, $\vphi(1)$ of a recurrence that yields survival probabilities. In cases when $Z_i$ are Bernoulli or Geometrically distributed, the conjecture on $D_n$ is shown to hold for all $n\in\mathbb{N}_0$. Additionally, a generating function $\Xi(s)$ for ultimate survival probabilities $\vphi(u)$ is derived.      
\end{abstract}

\maketitle

\section{Introduction}\label{sec:intro}

Let $Z_i$, $1 \leqslant i \leqslant n$ be i.i.d. copies of a discrete random variable (r.v.) $Z$ that takes only non-negative integer values. The sum $\sum_{i=1}^{n}Z_i$ is called {\it the random walk} (\emph{r.w.}). Random walks appear on various occasions in many fields of mathematics, both pure and applied. For instance, in finance and insurance, the accumulated wealth in discrete moments of time $n \in \{0, 1, 2 \dots\}=:\N_0$, can be modeled by formulas
\begin{align}\label{eq:model}
W(0):=u,\,W(n)=u+\kappa n-\sum_{i=1}^{n}Z_i,\,n\geqslant1,
\end{align}
where the parameter $u\in \mathbb{N}_0$ is called \emph{the initial surplus}, $\kappa\in\mathbb{N}$ is \emph{the income rate} and $Z_i$ represent randomly occurring \emph{claims}. The model defined in \eqref{eq:model} is a simplified discrete version of more general Sparre Andersen model \cite{Andersen}. In such a form as in (\ref{eq:model}) it was introduced and studied in \cite{GS}. 

The main concern on the accumulated wealth in (\ref{eq:model}) is whether $W(n)>0$ for every $n$. In other words, whether the initial savings and the subsequent income is always sufficient to cover incurred expenses. One desires to know the probability for the r.w. to hit the line $u+\kappa n$ at least once up to some natural $n$. To answer that for a finite $n$ is just a simple probabilistic problem, see \cite[Theorem 1]{GS}. However, the qualitative break appears as $n\to\infty$. For this, let us define
\begin{align}\label{eq:surv}
\varphi(u):=\mathbb{P}\left(\bigcap_{n=1}^{\infty}\left\{W(n) > 0\right\}\right).
\end{align}
The function $\varphi(u)$ is called {\it the ultimate time survival probability}. Using the law of total probability and elementary rearrangements (see \cite[p. 3]{GS}) it can be shown that
\begin{align}\label{eq:main}
\varphi(u)=\sum_{n=1}^{u+\kappa}h_{u+\kappa-n}\varphi(n),
\end{align}
where $h_k=\mathbb{P}(Z=k)$ for all $k\in\mathbb{N}_0$. We assume that $h_0>0$, for otherwise one could replace every $Z_i$ in (\ref{eq:model}) with $Z_i-1$, and $\kappa$ with $\kappa -1$, which causes the reduction in order of the recurrence, see \cite[Theorems 3 and 4]{GS}.

The recursive nature of the equation \eqref{eq:main} makes it very appealing for the numerical computation of the survival probabilities $\vphi(u)$. To avoid getting entangled in too many details, for the demonstration let us switch to $\kappa=2$. For the moment, assume that the initial values  $\vphi(0), \vphi(1)$ are known beforehand. Then, the remaining values $\varphi(u)$, $u \geq 2$ can be solved for, by setting $u=0,\, 1,\,\ldots ,\,n$ in (\ref{eq:main}) as follows:
\begin{flalign*}
\varphi(0)=h_1\varphi(1)+h_0\varphi(2) \implies \varphi(2)=\frac{1}{h_0}\varphi(0)-\frac{h_1}{h_0}\varphi(1),
\end{flalign*}
\begin{flalign*}
\begin{split}
(1-h_2)\varphi(1)-h_1\varphi(2)-h_0\varphi(3)&=0 \implies\\ \varphi(3) & =\frac{-h_1}{h_0^2}\varphi(0)+\frac{h_0+h_1^2+h_0h_2}{h_0^2}\varphi(1).
\end{split}
\end{flalign*}
Using mathematical induction the following equalities can be derived (for details see \cite[p. 17]{GS}),
\begin{align}\label{n via 01}
\varphi(n)=x_n\varphi(0)+y_n\varphi(1),
\end{align}
where the deterministic sequences $x_n$ and $y_n$ are given by
\begin{equation}\label{def:xn}
x_0 := 1,\, x_1 := 0, \, x_n := \frac{1}{h_0} \left( x_{n-2} -\sum_{i=1}^{n-1} h_{n-i} x_i\right), \text{ for } n \geqslant 2
\end{equation} 
and
\begin{equation}\label{def:yn}
y_0 := 0,\,y_1 := 1,\,  y_n := \frac{1}{h_0} \left( y_{n-2} -\sum_{i=1}^{n-1} h_{n-i} y_i\right), \text{ for } n \geqslant 2.
\end{equation}

A standard way to obtain the initial values $\varphi(0)$, $\varphi(1)$ is an application of stationarity and total expectation for distribution of maximum of sums $\sum_{i=1}^n (Z_i - \kappa)$. Such a derivation is outlined in Section \ref{sec:alt}. However, this standard method requires the explicit knowledge on the poles of the probability generating function (p.g.f.). In real life problems, the precise computation of poles might not be very efficient or even impossible in cases where the distribution is not given analytically and must be estimated from the limited number of observations (especially for heavy tailed distributions). However, as far as \emph{the numerical approximation} of initial values $\vphi(0)$, $\vphi(1)$ is concerned, there exists a neat procedure where one does not need to solve for the poles of the p.g.f. explicitly. It works as follows. For every $n\in\mathbb{N}_0$, \eqref{n via 01} implies 
\begin{align}\label{system}
\left( \begin{array}{cc}
{x}_{n} & y_n  \\
x_{n+1} & y_{n+1}
\end{array} \right)
\times
\left( \begin{array}{c}
\varphi(0) \\
\varphi(1)
\end{array} \right)
=
\left( \begin{array}{c}
\varphi(n) \\
\varphi(n+1)
\end{array} \right).
\end{align}

Let
\begin{equation}\label{def:Dn}
    D_n:= \begin{vmatrix}
            x_n & y_n\\
            x_{n+1} & y_{n+1}\\
        \end{vmatrix},\, n\in\mathbb{N}_0
\end{equation}
be the principal determinant of \eqref{system}. If $D_n \ne 0$, then $\varphi(0)$ and $\varphi(1)$ can be solved from the system \eqref{system}:
\begin{equation}\label{eq:limits}
\varphi(0) := \frac{y_{n+1}}{D_n}\varphi(n) - \frac{y_n}{D_n}\varphi(n+1), \, \varphi(1) := \frac{x_n}{D_n}\varphi(n+1) - \frac{x_{n+1}}{D_n}\varphi(n)
\end{equation}
Obviously, $\varphi(n)\leqslant\varphi(n+1)\leqslant1$ for all $n\in\mathbb{N}_0$. This implies that the limit $\varphi(\infty):=\lim_{n \to \infty}\varphi(n) \in [0, 1]$ exists. In addition, if the limits of the ratios
\begin{equation}\label{eq:limrat}
\frac{x_n}{D_n}, \quad \frac{x_{n+1}}{D_n}, \quad \frac{y_n}{D_n}, \quad \frac{ y_{n+1}}{D_n},
\end{equation}
exist, then
\begin{align}\label{eq:ivp}
\varphi(0) = \varphi(\infty)\lim_{n \to \infty}\frac{y_{n+1} - y_n}{D_n},
\,\varphi(1) = \varphi(\infty)\lim_{n \to \infty}\frac{x_n - x_{n+1}}{D_n}.
\end{align}
Using the law of large numbers it can be proved \cite[Lemma 1]{GS} that $\varphi(\infty)=1$ if the expectation $\mathbb{E}Z<2$. If $\mathbb{E}Z \geqslant 2$, then $\varphi(n)$, $n \in \N_0$ and $\varphi(\infty)$ all vanish \cite[Theorem 9]{GS}. Intuitively, this means that the survival is possible if claims, represented by $Z$, are not too "aggressive" on average. Thus, it remains to determine the conditions under which the limits of the ratios \eqref{eq:limrat} exist and can be used in \eqref{eq:limits}.

For the reasons discussed above, the determinants $D_n$ are the main objects of interest in the present paper.  The non-vanishing of $D_n$ for each $n \in \N$, and the asymptotic value for large $n$  is considered in the context of the numerical reconstruction of the survival probabilities \eqref{eq:surv}. In \cite[p. 6]{GS}, numerical computations with some selected distributions of $Z$ led to the following conjecture.

\begin{conjecture}\label{conj:Dn}
For every $n \in \N_0$, 
\[
1 \leqslant D_{2n} \leqslant D_{2n+2}
\qquad \text{ and } \qquad D_{2n+3} \leqslant D_{2n+1} \leqslant -1.
\]
\end{conjecture}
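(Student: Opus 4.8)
The plan is to convert the two–sequence determinant $D_n$ into an ordinary Hankel determinant of a single sequence. Multiplying the recurrences \eqref{def:xn}, \eqref{def:yn} by $s^n$, summing, and using $h_0>0$ gives the ordinary generating functions
\[
X(s):=\sum_{n\ge0}x_ns^n=\frac{H(s)}{H(s)-s^2},\qquad Y(s):=\sum_{n\ge0}y_ns^n=\frac{h_0s}{H(s)-s^2},
\]
where $H(s)=\sum_{k\ge0}h_ks^k$. Since then $sY(s)=h_0\bigl(X(s)-1\bigr)$, matching coefficients yields the identity $y_n=h_0x_{n+1}$ for every $n\in\N_0$ (this can also be verified by a short induction from \eqref{def:xn}, \eqref{def:yn} using $x_1=0$). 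Substituting into \eqref{def:Dn},
\[
D_n=x_ny_{n+1}-x_{n+1}y_n=h_0\bigl(x_nx_{n+2}-x_{n+1}^2\bigr)=h_0\begin{vmatrix}x_n&x_{n+1}\\x_{n+1}&x_{n+2}\end{vmatrix}.
\]
In particular $D_0=1$ and $D_1=-x_2=-1/h_0$, so the two boundary inequalities $D_0\ge1$, $D_1\le-1$ already hold; moreover $(-1)^0D_0=1$ and $(-1)^1D_1=1/h_0$ are $\ge1$, so by induction the entire conjecture reduces to the single monotonicity claim
\[
(-1)^n\bigl(D_{n+2}-D_n\bigr)\ge0\qquad\text{for all }n\in\N_0,
\]
which I would adopt as the target (it forces both the alternating sign $\sgn D_n=(-1)^n$ and the bound $|D_n|\ge1$).

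Second, I would dispose of the cases where $H$ is rational, namely Bernoulli ($H(s)=h_0+h_1s$) and Geometric ($H(s)=(1-q)/(1-qs)$). Here $X(s)$ is rational, its denominator $H(s)-s^2$ vanishes at $s=1$ together with one (Bernoulli) resp.\ two (Geometric) further simple real roots, one of which is negative (for Bernoulli it is $s=-h_0$); partial fractions give $x_n=\sum_i r_i\alpha_i^{\,n}$ with explicit reciprocal roots $\alpha_i$ and residues $r_i$, and a one–line computation turns the Hankel determinant into a finite sum of geometric sequences,
\[
x_nx_{n+2}-x_{n+1}^2=\sum_{i<j}r_ir_j(\alpha_i-\alpha_j)^2(\alpha_i\alpha_j)^n .
\]
For Bernoulli there is a single term and $D_n=(-1)^n(1-p)^{-n}$, so $(-1)^nD_n=(1-p)^{-n}\ge1$ is strictly increasing and the conjecture is immediate. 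For Geometric there are three terms, one with ratio $\alpha_i\alpha_j>0$ and two with ratio $<0$; using the relations $s_+s_-=-(1-q)/q$ and $s_++s_-=(1-q)/q$ between the roots of the quadratic factor, one compares the moduli of the three ratios, checks the (nonzero, appropriately signed) coefficients $r_ir_j(\alpha_i-\alpha_j)^2$, and concludes $(-1)^n(D_{n+2}-D_n)\ge0$ for every $n$. These are finite, elementary verifications.

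Third, the general case. Iterating \eqref{def:xn}, \eqref{def:yn} once inside \eqref{def:Dn} and using $y_i=h_0x_{i+1}$ gives, for $n\ge1$,
\[
h_0D_n=-D_{n-1}+h_0\sum_{i=1}^{n-1}h_{n+1-i}\bigl(x_ix_{n+1}-x_{i+1}x_n\bigr),
\]
in which the summand $i=n-1$ is a multiple of $D_{n-1}$ (as $x_{n-1}x_{n+1}-x_n^2=D_{n-1}/h_0$) and the remaining summands are long–range minors $x_ix_{n+1}-x_{i+1}x_n$, $i<n-1$. The strategy would be a single induction on $n$ controlling simultaneously $\sgn D_n=(-1)^n$, $|D_n|\ge1$, the monotonicity $|D_{n+2}|\ge|D_n|$, and sign/size information on those minors. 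The hard part, and the place I expect the argument to stall in full generality, is precisely the control of $x_ix_{n+1}-x_{i+1}x_n$ for $i<n-1$: unlike the Casoratian of a fixed–order linear recurrence these minors are not sign–definite (already $i=0$ gives $x_{n+1}$, which changes sign), so the naive induction does not close. Making it work seems to require extra structure — a total–positivity property of a suitably twisted sequence such as $\bigl((-1)^{\floor{n/2}}x_n\bigr)_{n\ge0}$, or a probabilistic/renewal representation of $x_n$ rendering the Hankel minors transparent. Without such an input one can still read off the behaviour of $D_n$ from the singularity of $X(s)$ at its radius of convergence (governed by the smallest–modulus root of $H(s)=s^2$, which for $\E Z<2$ is a negative real root) and obtain the \emph{asymptotic} form of the conjecture; upgrading this to the stated inequalities for every $n\in\N_0$ in the general case is the crux I would leave open.
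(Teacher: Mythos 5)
You were asked to prove Conjecture \ref{conj:Dn}, and you should first note that the paper itself does not prove it: it is stated as a conjecture, open precisely in the primitive case $\mathbb{P}(Z\in 2\N_0+1)\neq 0$, and the paper only establishes it for Bernoulli and Geometric claims (Theorem \ref{thm:bern_geom}), for the imprimitive case (Proposition \ref{lemma:imprDn}), and in an asymptotic form for $n>n_0$ (Theorem \ref{thm:main_res}). Within that scope, your route is essentially the paper's: your identity $y_n=h_0x_{n+1}$ and the rewriting $D_n=h_0(x_nx_{n+2}-x_{n+1}^2)$ are exactly \eqref{eq:yfromx} and \eqref{eq:Dx}; your reduction of the conjecture to $(-1)^n(D_{n+2}-D_n)\geq 0$ together with $D_0=1$, $D_1=-1/h_0\leq -1$ is valid; your Bernoulli computation $D_n=(-1)^nq^{-n}$ agrees with Section \ref{sec:examples}; and your closing observation that only the asymptotic behaviour of $D_n$ follows from the dominant singularity of $X(s)$ is the content of Theorem \ref{thm:asymtD_n}. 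Your honest admission that the general primitive case does not close matches the status of the problem in the paper.

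Three concrete shortfalls relative to what the paper actually carries out. First, your Geometric case is only a sketch: ``compare the moduli \dots and conclude'' hides the real work, since for $p<1/3$ one has $\be>1$ and the dominant ratio among $\{\al,\ \be,\ \al\be\}$ is $\al\be$, while for $p>1/3$ it is $\al$; the paper instead analyses $(-1)^n(D_{n+2}-\al^2D_n)$ and must show that a specific function $f(p)=\frac{c_1(1-\be)(\al-\be)}{a(1+\be)(\al+\be)}$ stays below $1$ on $(0,1)$, a verification it delegates in part to computer algebra, and it also treats the degenerate value $p=1/3$ (double root at $s=1$) separately --- none of this is done in your proposal. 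Second, you omit the imprimitive case $\mathbb{P}(Z\in 2\N_0)=1$, where the full conjecture is in fact easy: there $x_{2n+1}=0$, so $D_{2n}=h_0x_{2n}x_{2n+2}$ and $D_{2n+1}=-h_0x_{2n+2}^2$, and monotonicity of $x_{2n}$ (Proposition \ref{prop:xn_mon}) finishes the proof without any total-positivity input. Third, your asymptotic fallback needs hypotheses you did not state: since $H(s)$ in general has radius of convergence $1$, knowing the location of the root $-\al^{-1}$ is not enough; to get $D_n\sim(-1)^n h_0\,a\,c_r(1+\al)^2 n^{r-1}\al^n$ one must control the remainder in the decomposition of $X(s)$, which the paper does via moment conditions ($\E{Z^2}<\infty$, or $\E{Z^4}<\infty$ when $\E{Z}=2$), Lemma \ref{lem:diffdiv} and a Riemann--Lebesgue argument (Theorems \ref{thm:decompX} and \ref{thm:asymtx}). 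With these caveats, your assessment of the crux is correct: the general primitive case of Conjecture \ref{conj:Dn} remains open in the paper as well.
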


So far, numerical calculations did not reveal any counterexamples of Conjecture \ref{conj:Dn}. It is worth to mention that different generalizations, comparing to the model in \eqref{eq:model}, covering different r.w. setups, time, income rates and etc. are known: one can refer to \cite{Andersen}, \cite{DS}, \cite{Dickson_Waters}, \cite{Gerber}, \cite{Gerber1}, \cite{GKS}, \cite{GS1}, \cite{Shiu}, and many other papers. In this paper we show that in cases where $\mathbb{P}{(Z \in 2\N_0+1)} = 0$, Conjecture \ref{conj:Dn} admits almost a trivial proof which is given in Section \ref{sec:genfun}, Proposition \ref{lemma:imprDn}. When $\mathbb{P}{(Z \in 2\N_0+1)} \ne 0$, no proof is available yet. In Section \ref{sec:examples}, we verify the conjecture in cases when $Z$ is Bernoulli or Geometric r.v. - Theorem \ref{thm:bern_geom} in Section \ref{sec:results}.

In the present paper we prove an asymptotic version of Conjecture \ref{conj:Dn} that requires the finiteness of higher moments of $Z$ but does not depend on the specific distribution - Theorem \ref{thm:main_res} in Section \ref{sec:results}.

More precisely, in Section \ref{sec:asymp_exp}, by Corollary \ref{cor:cfs}, Theorem \ref{thm:asymtx} and Theorem \ref{thm:asymtD_n} we derive an exact expressions for dominant terms of $x_n$, $y_n$ and $D_n$ in \eqref{eq:ivp} which yield the explicit initial values $\vphi(0)$ and $\vphi(1)$ for recurrence \eqref{eq:main} - Corollary \ref{cor:ivp} in Section \ref{sec:results}. Moreover, we give a closed-form expression of  generating function of $\vphi(u+1),\,u\in\mathbb{N}_0$ - Theorem \ref{thm:Phi} in Section \ref{sec:results}.

It should be noted that proving monotonicity or even non-vanishing of $D_n$ for all $0\leqslant n \leqslant n_0$ seems to be a non-trivial problem in all but few trivial cases. For instance, when the p.g.f. of $Z$ is a rational function (the ratio of two polynomials with real coefficients), then to solve $D_n=0$ for $n$ is equivalent to finding all zeros of a certain linear recurrence. This includes even the very basic case when the r.v. $Z$ has a finite support (its p.g.f. is polynomial). There are no general explicit analytic formulas for finding such $n$. Typically, the solution involves obtaining numerical upper bound $n_0$ for $n\in\mathbb{N}$ or the total number of such possible solutions using some sophisticated number--theoretical machinery, and then checking the range of possible $n\in\mathbb{N}$ with computers. Monograph \cite{Graham} is an excellent source of references on this and related topics.

As the initial values of survival probabilities depend on the location of zeros of a p.g.f. of the r.v. $Z$, it is unlikely that there exists one simple formula that covers all possible cases for arbitrary $\kappa \in \N$. To avoid being buried by extensive technical details and the large number of cases to work through, in the present paper we deal with the most simple version of Conjecture \ref{conj:Dn} for $\kappa=2$. For this particular $\kappa$ , Conjecture \ref{conj:Dn} is very accessible to our analysis. As we prepare mathematical tools to extend these results beyond $\kappa\geq 3$ or for models with more complicated r.w. setups in the future, some of our lemmas in Section \ref{sec:zero_locs} are stated in a more general form for arbitrary $\kappa \in \N$.

\section{Main results}\label{sec:results}

In this section we formulate main results of the paper. As mentioned in introduction, Conjecture \ref{conj:Dn} is correct if $Z$ is Bernoulli or Geometric distributed. 

\begin{theorem}\label{thm:bern_geom}
Let $1-\mathbb{P}(Z=0)=p=\mathbb{P}(Z=1)$ or $\mathbb{P}(Z=k)=p(1-p)^k,\,k\in\mathbb{N}_0$, where $p \in (0, 1)$. For such r.v. $Z$, Conjecture \ref{conj:Dn} is true.
\end{theorem}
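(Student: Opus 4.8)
The plan is to handle the two families separately, in each case by finding a closed form for the generating functions of $x_n$ and $y_n$, extracting explicit formulas (or at least tractable recurrences) for $x_n,y_n$, and then computing $D_n$ directly. Recall from \eqref{def:xn}--\eqref{def:yn} that if we set $X(s):=\sum_{n\geq 0}x_ns^n$ and $Y(s):=\sum_{n\geq 0}y_ns^n$, then multiplying the recurrences by $s^n$ and summing yields functional equations of the shape $h_0 X(s) = s^2 X(s) - (H(s)-h_0)X(s) + (\text{polynomial correction from initial terms})$, where $H(s)=\sum_k h_ks^k$ is the p.g.f. of $Z$. For Bernoulli $Z$ we have $H(s)=(1-p)+ps$, and for Geometric $Z$ we have $H(s)=p/(1-(1-p)s)$; in both cases $H$ is rational, so $X(s)$ and $Y(s)$ are rational functions with explicitly computable denominators (a quadratic in the Bernoulli case, a cubic in the Geometric case). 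Partial fractions then give $x_n$ and $y_n$ as explicit linear combinations of $n$-th powers of the reciprocal roots.

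First I would carry out the Bernoulli case. Here $h_0=1-p$, $h_1=p$, and $h_k=0$ for $k\geq 2$, so the recurrences \eqref{def:xn}--\eqref{def:yn} become the genuinely second-order linear recurrences $h_0 x_n = x_{n-2} - p\,x_{n-1}$, i.e. $(1-p)x_n + p\,x_{n-1} - x_{n-2}=0$, and likewise for $y_n$. The characteristic equation is $(1-p)\lambda^2 + p\lambda - 1 = 0$, whose roots are $\lambda_1 = 1$ and $\lambda_2 = -1/(1-p)$. Thus $x_n = A + B(-1/(1-p))^n$ and $y_n = C + E(-1/(1-p))^n$ with constants fixed by the initial data $x_0=1,x_1=0,y_0=0,y_1=1$. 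Plugging into \eqref{def:Dn} and using the standard fact that for two sequences solving the same linear recurrence the determinant $x_ny_{n+1}-x_{n+1}y_n$ is $(\det\text{ of initial block})\cdot(\text{product of roots})^n$ up to sign — here the product of roots is $-1/(1-p)$ — one gets $D_n = (-1)^{?}\,(1-p)^{-n}\cdot D_0$ with $D_0 = x_0y_1 - x_1y_0 = 1$. Since $1/(1-p)>1$, this immediately gives $D_{2n}=(1-p)^{-2n}\geq 1$ increasing and $D_{2n+1}=-(1-p)^{-(2n+1)}\leq -1$ decreasing in absolute value the right way, which is exactly Conjecture \ref{conj:Dn}. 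The sign bookkeeping (why the odd-indexed determinants are negative) comes from the negativity of the product of the two characteristic roots.

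For the Geometric case the extra pole of $H$ raises the order by one: clearing denominators in the functional equation produces $X(s),Y(s)$ rational with a cubic denominator $q(s)$, and correspondingly $x_n,y_n$ satisfy a common third-order linear recurrence. The cleanest route is to again use the ``Casoratian'' viewpoint: for sequences $x,y$ satisfying a common order-$r$ recurrence $\sum_{j=0}^r c_j u_{n+j}=0$ (here $r=3$ once we account for the shift), the $2\times 2$ minor $D_n = x_ny_{n+1}-x_{n+1}y_n$ itself satisfies a first-order recurrence with ratio determined by the coefficients $c_0,c_r$, so $D_{n+1}/D_n$ is an explicit constant $\rho$ (a ratio of leading/trailing coefficients of $q$), giving $D_n = D_0\,\rho^n$ up to the usual sign pattern; one then checks $|\rho|>1$ and that the sign of $\rho$ forces the alternation $D_{2n}\geq 1$, $D_{2n+1}\leq -1$ with the stated monotonicity, and verifies $D_0=1$ from the initial values. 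I would compute $q(s)$ and $\rho$ explicitly in terms of $p$; a convenient consistency check is that letting the geometric parameter degenerate should recover the Bernoulli-type answer. The main obstacle is purely computational rather than conceptual: getting the cubic denominator $q(s)$ and its roots (or at least the constant $\rho=q\text{'s trailing over leading coefficient, up to sign}$) correct, and then verifying the two inequalities $|\rho|>1$ and $\rho<0$ for all $p\in(0,1)$ — this last step is where one must be careful, since it is exactly the content of the conjecture and a sign error would invalidate the whole argument.
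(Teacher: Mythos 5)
Your Bernoulli argument is correct, and in fact a cleaner route than the paper's: the paper plugs the explicit formula $x_n = (1 + (-1)^n q^{1-n})/(1+q)$ into \eqref{def:Dn} and simplifies by hand to get $D_n = (-1)^n/q^n$, whereas you observe that $D_n$ is the Casoratian of two solutions of the genuine second-order recurrence $(1-p)u_{n+2} + p u_{n+1} - u_n = 0$ and therefore satisfies $D_{n+1} = (c_0/c_2) D_n = -\tfrac{1}{1-p}\,D_n$ with $D_0 = 1$. That gives the same answer with no algebra.

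However, your Geometric case has a genuine gap, and it is exactly the step you flag as ``purely computational.'' The statement that for two sequences $x, y$ satisfying a common order-$r$ linear recurrence the $2\times 2$ determinant $D_n = x_n y_{n+1} - x_{n+1} y_n$ satisfies a \emph{first-order} recurrence with constant ratio is true only for $r = 2$. The object that evolves by a single multiplicative factor (the ratio of trailing to leading recurrence coefficients) is the full $r\times r$ Casoratian built from $r$ independent solutions, not the $2\times 2$ minor of two of them. For $r = 3$, which is the Geometric case (the denominator of $X(s)$ is the cubic $p - s^2 + qs^3$), the $2\times 2$ Hankel determinant $D_n = h_0(x_n x_{n+2} - x_{n+1}^2)$ is generically a linear combination of \emph{three} distinct geometric sequences $(\al\be)^n$, $(-\be)^n$, $\al^n$ coming from the $\binom{3}{2}$ products of pairs of characteristic roots; it is not itself geometric, so there is no constant $\rho$ with $D_n = D_0 \rho^n$. (A toy counterexample: for $u_{n+3} = u_n$ with $x = (1,0,0,1,\dots)$, $y = (0,1,0,0,\dots)$ one gets $D_0 = 1$, $D_1 = D_2 = 0$.) The paper has to work for this: it derives
\[
\frac{(-1)^n}{h_0}D_n = ab(\al+\be)^2(\al\be)^n + (-1)^n b c_1 (\be - 1)^2 \be^n + c_1 a (1+\al)^2 \al^n,
\]
then studies the sign of $(-1)^n\bigl(D_{n+2} - \al^2 D_n\bigr)$, reduces it to $\al^{n+2} + (-1)^n f(p) > 0$ for an explicit rational function $f(p)$, and verifies $0 < f(p) < 1 < \al$ on $(0,1)$; the degenerate value $p = 1/3$ (double root at $s = 1$) is handled separately. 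Your sketch replaces all of that with a lemma that simply is not true for cubic characteristic polynomials, so the argument does not close.
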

We prove Theorem \ref{thm:bern_geom} in Section \ref{sec:examples}. As noted, Conjecture \ref{conj:Dn} admits almost a trivial proof if $\mathbb{P}({Z} \in 2\N_0+1)=0$ - see Proposition \ref{lemma:imprDn} in Section \ref{sec:genfun}. However, when $\mathbb{P}({Z} \in 2\N_0+1)\neq0$, Conjecture \ref{conj:Dn} is still open for all $n\in\mathbb{N}$, while for $n\to\infty$, the following statement is correct.

\begin{theorem}\label{thm:main_res}
Assume that $\mathbb{P}({Z} \in 2\N_0+1) \ne 0$, and $\E{Z} < +\infty$. Furthermore, suppose that the  higher moments of $Z$ satisfy:
\[
    \E{Z^2} < +\infty, \text{ if }\; \E{Z} \ne 2 \quad \text{ or } \quad \E{Z^4} < +\infty, \text{ if } \; \E{Z} = 2.
\]
Then,
$$
1 < D_{2n} < D_{2n+2}, \quad D_{2n+3} < D_{2n+1} < -1,
$$
for every $n > n_0$, where $n_0$ depends on $Z$ only, and
$$
D_{2n} \to +\infty, \qquad D_{2n+1} \to -\infty,
$$
as $n \to +\infty$.
\end{theorem}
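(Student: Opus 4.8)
The plan is to establish the asymptotic behavior of the sequences $x_n$, $y_n$ by studying their generating functions. Let $X(s) = \sum_{n \geq 0} x_n s^n$ and $Y(s) = \sum_{n \geq 0} y_n s^n$. From the recurrences \eqref{def:xn}, \eqref{def:yn} and the convolution structure of the sum $\sum h_{n-i} x_i$, one should obtain closed-form rational-type expressions $X(s) = P_X(s)/Q(s)$ and $Y(s) = P_Y(s)/Q(s)$ with a common denominator $Q(s)$ built from the p.g.f. $H(s) = \sum_{k \geq 0} h_k s^k$. Concretely, multiplying the recurrence by $s^n$ and summing yields $h_0 s^2 X(s) = s^2(s^2 X(s)) - s^2(H(s) - h_0)(\ldots) + \text{(initial-value corrections)}$, so that $Q(s)$ is essentially $s^{\kappa} H(1/s)$-related, i.e. $s^2 H(1/s) - 1 + (\text{polynomial adjustments})$ for $\kappa = 2$. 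The roots of $Q$ inside and on the unit disk, and in particular the root structure near $s = \pm 1$, will govern the growth of $x_n$ and $y_n$ through partial-fraction expansion.

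The key step is then to locate the dominant singularity of $X(s)$ and $Y(s)$. Since $H$ is a p.g.f. with $H(1) = 1$, the value $s = 1$ is a root of the relevant characteristic polynomial; the hypothesis $\mathbb{P}(Z \in 2\N_0 + 1) \neq 0$ forces $s = -1$ to \emph{not} be a root (or to be a simple one of a different nature), which is precisely what breaks the exact symmetry that makes the even case trivial. The condition on $\E Z$ versus the value $2 = \kappa$ determines whether $s = 1$ is a simple or double root of the characteristic polynomial (double when $\E Z = 2$, which is why the fourth moment is needed there to control the second-order term), and the moment hypotheses guarantee enough smoothness of $H$ near these points to extract the asymptotic expansions claimed in Corollary \ref{cor:cfs}, Theorem \ref{thm:asymtx}, and Theorem \ref{thm:asymtD_n}. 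From those, one reads off $x_n - x_{n+1} \sim c_1 \cdot r^n$ (or polynomial-times-constant growth) and similarly for $y_n$, and then computes $D_n = x_n y_{n+1} - x_{n+1} y_n$ directly. The sign alternation $D_{2n} > 0 > D_{2n+1}$ and the divergence $|D_n| \to \infty$ should emerge from a leading term of the form $D_n = A(-1)^n \rho^n + (\text{lower order})$ with $\rho > 1$ and $A$ of a definite sign, where $\rho$ is the reciprocal of the modulus of the smallest root of $Q$ outside (or the relevant root structure giving exponential growth); strict monotonicity of $D_{2n}$ and $D_{2n+1}$ for $n > n_0$ follows once the leading term dominates the remainder, which happens beyond some effective threshold $n_0$ depending only on the gap between $\rho$ and the next root modulus — hence depending on $Z$ only.

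The main obstacle I anticipate is the careful bookkeeping of the characteristic polynomial's roots: one must verify that the root giving exponential growth has modulus strictly greater than $1$ and is \emph{simple}, that no other root shares its modulus (so the asymptotic is genuinely single-term up to sign), and that the relevant combination of partial-fraction coefficients entering $D_n$ does not accidentally vanish. Establishing the root of largest relevant modulus is real and simple — rather than a complex conjugate pair or a repeated root — is where the structure of the p.g.f. (its coefficients being a probability distribution with $h_0 > 0$) must be used, likely via a Rouché- or Pringsheim-type argument applied to $s^2 H(1/s) - \text{(polynomial)}$, and this is the technical heart that Section \ref{sec:zero_locs} presumably develops. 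The borderline case $\E Z = 2$ is delicate because the would-be dominant contribution from $s = 1$ degenerates, so the exponentially growing root must be shown to still dominate, requiring the sharper $\E Z^4 < \infty$ control on the Taylor remainder of $H$ at $s = 1$.

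Once the asymptotic formulas for $x_n$, $y_n$, $D_n$ are in hand, the remaining work is routine: substitute into $D_n = x_n y_{n+1} - x_{n+1} y_n$, simplify using the explicit constants, read off the sign from the parity $(-1)^n$, bound the error term by a geometrically smaller quantity, and choose $n_0$ large enough that the error is dominated for all $n > n_0$ — which immediately gives both $1 < D_{2n} < D_{2n+2}$, $D_{2n+3} < D_{2n+1} < -1$, and the divergence $D_{2n} \to +\infty$, $D_{2n+1} \to -\infty$.
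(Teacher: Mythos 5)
Your overall strategy mirrors the paper's: express $X(s)$, $Y(s)$ as meromorphic functions whose poles are the zeros of $H(s) - s^2$ in $\overline{\D}$, use the location-of-zeros machinery (Rouch\'e, primitivity, moment conditions for smoothness at $s=1$) to pin those zeros down, expand $x_n$, $y_n$ via partial fractions, and then read off the sign and growth of $D_n$. This is precisely what the paper does in Sections 4--5. One small confusion in your setup: the denominator is $H(s)-s^2$ itself, not $s^2 H(1/s)-1$; the recurrence is a full-history convolution, not a fixed-order linear one, so the "characteristic polynomial" heuristic does not quite apply, though it leads you to the right place.

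There is, however, one step you call "routine" that is actually the crux. When you substitute the dominant asymptotic $x_n \sim a(-1)^n\al^n$ (and $y_n = h_0 x_{n+1}$, which the paper derives in Eq.~\eqref{eq:yfromx} and which collapses $D_n$ into the Hankel form $h_0(x_n x_{n+2}-x_{n+1}^2)$) into $D_n$, the leading $\al^{2n+2}$ contributions cancel \emph{identically}. The surviving asymptotic is a cross-term: $D_n \sim (-1)^n h_0 a c_r (1+\al)^2 n^{r-1}\al^n$ when $\E{Z}\leq 2$, or $D_n \sim (-1)^n h_0 a b (\al+\be)^2 (\al\be)^n$ when $\E{Z}>2$. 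So the growth rate is $\al^n$ (or $(\al\be)^n$), not $\al^{2n}$, and the sign hinges on the \emph{products} $ac_r$ or $ab$ being positive, which requires computing the partial-fraction residues explicitly (the paper's Corollary~\ref{cor:cfs}). Your plan to "read off the sign from the parity $(-1)^n$" and "bound the error by a geometrically smaller quantity" has nothing dominant to bound against until you confront this cancellation and verify the residue signs; the outline as written would hit an unexpected $0\cdot\al^{2n}$ and needs to be supplemented at exactly this point.
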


We prove Theorem \ref{thm:main_res} in Section \ref{sec:asymp_exp}. Theorem \ref{thm:main_res} leads to the following expressions of the initial values $\vphi(0)$ and $\vphi(1)$ for the recurrence relation \eqref{eq:main}.

\begin{corollary}\label{cor:ivp}
Let $h_0=\mathbb{P}{(Z=0)}>0$ and $\E{Z} < 2$.

If $\mathbb{P}({Z} \in 2\N_0+1) = 0$, then
\[
\vphi(0) = \frac{2-\mathbb{E}Z}{2},\, \qquad \vphi(1) =\frac{2-\mathbb{E}Z}{2h_0}.
\]

If $\mathbb{P}({Z} \in 2\N_0+1) \ne 0$, and, in addition, $\E{Z}^2 < +\infty$, then
\[
\vphi(0) = \lim_{n \to \infty}\frac{y_{n+1} - y_n}{D_n} = \frac{\al(2-\E{Z})}{1+\al},
\]
\[
\vphi(1) = \lim_{n \to \infty}\frac{x_{n} - x_{n+1}}{D_n} =\frac{2-\E{Z}}{h_0(1+\al)},
\]
where $-\al^{-1}\in(-1,0)$ denotes the unique real solution of the equation $H(s)=s^2,\,s\in \mathbb{C}$ and $H(s)$ is the probability generating function (p.g.f.) of r.v. $Z$.
\end{corollary}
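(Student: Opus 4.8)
The plan is to feed the asymptotic expansions of $x_n$, $y_n$ and $D_n$ (established earlier as Theorem \ref{thm:asymtx}, Theorem \ref{thm:asymtD_n} and Corollary \ref{cor:cfs}) into the two limit formulas \eqref{eq:ivp}, using the fact that $\vphi(\infty)=1$ whenever $\E{Z}<2$ (\cite[Lemma 1]{GS}). There are two regimes to treat separately, matching the dichotomy in the statement.

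First I would dispose of the case $\mathbb{P}(Z\in 2\N_0+1)=0$. Here Proposition \ref{lemma:imprDn} from Section \ref{sec:genfun} gives $D_{2n}=1$ and $D_{2n+1}=-1$ for all $n$, so the ratios in \eqref{eq:limrat} are trivially controlled, and \eqref{eq:ivp} collapses to $\vphi(0)=\lim(y_{n+1}-y_n)$, $\vphi(1)=\lim(x_n-x_{n+1})$. The value of these limits should already be read off from the generating-function identity for $\Xi(s)$ (or equivalently from the classical stationarity argument sketched in Section \ref{sec:alt}): the even/odd splitting forces $\vphi(0)=(2-\E{Z})/2$, and then $\vphi(2)=h_0^{-1}\vphi(0)-h_1 h_0^{-1}\vphi(1)$ together with $h_1=0$ yields $\vphi(1)=\vphi(0)/h_0=(2-\E{Z})/(2h_0)$. (Alternatively, $\vphi(1)=\vphi(0)/h_0$ follows directly from \eqref{eq:main} at $u=0$ since $h_1=0$ in this case.)

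The substantive case is $\mathbb{P}(Z\in 2\N_0+1)\ne 0$ with $\E{Z^2}<\infty$. The strategy is: (i) by Theorem \ref{thm:asymtx}, $x_n$ and $y_n$ each grow like a constant times $\al^{\,n}$ (up to the parity-dependent lower-order corrections), where $\al>1$ is the reciprocal of the unique real root $-\al^{-1}\in(-1,0)$ of $H(s)=s^2$ — this root exists and is simple because $H$ is a p.g.f. with $H(0)=h_0>0$, $H(1)=1$, and on $(-1,0)$ the function $H(s)-s^2$ changes sign (with $\E{Z}<2$ pinning down which branch). (ii) By Theorem \ref{thm:asymtD_n}, $D_n$ grows like a constant times $\al^{\,n}$ as well, with the constant computed from the residue of the generating function $\Xi(s)$ at $s=-\al^{-1}$ (equivalently, from $1/(H'(-\al^{-1})+2\al^{-1})$ times the appropriate factor). (iii) Therefore each of $x_n/D_n$, $y_n/D_n$ converges, and the combinations $(y_{n+1}-y_n)/D_n$, $(x_n-x_{n+1})/D_n$ converge to explicit constants; simplifying, using $H(-\al^{-1})=\al^{-2}$ to eliminate $h_0$-type terms and the normalization $\sum h_k=1$, $\sum k h_k=\E{Z}$, should collapse everything to $\al(2-\E{Z})/(1+\al)$ and $(2-\E{Z})/(h_0(1+\al))$ respectively. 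Finally multiply by $\vphi(\infty)=1$.

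The main obstacle will be step (iii): verifying that the constants coming out of the two asymptotic theorems combine to give exactly these closed forms, and in particular that all the residue/derivative factors cancel cleanly against $2-\E{Z}$ and $h_0$. This is where one must be careful about (a) the parity-dependent secondary terms in $x_n,y_n,D_n$ — they are lower order so they drop out of the ratios, but one should confirm the dominant coefficients for $x_n$ versus $x_{n+1}$ differ by exactly the factor $\al$ so that $x_n-x_{n+1}$ does not accidentally vanish to leading order; and (b) keeping the sign bookkeeping straight, since $D_{2n}\to+\infty$ while $D_{2n+1}\to-\infty$, yet the two limits in \eqref{eq:ivp} along even and odd $n$ must agree. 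A clean way to sidestep the parity nuisance is to compute the limits directly from the partial-fraction expansion of $\Xi(s)$ at its dominant pole $s=-\al^{-1}$ rather than from the sequences term-by-term; the consistency of the even and odd subsequential limits is then automatic, and one only needs the single residue computation. I would present the argument in that form, invoking Theorem \ref{thm:asymtD_n} for the existence of the limits and the generating function $\Xi(s)$ from Theorem \ref{thm:Phi} for their evaluation.
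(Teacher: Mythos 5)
Your plan for the primitive case is essentially the paper's route: feed the asymptotics of Theorem~\ref{thm:asymtx} and the $D_n$ asymptotics from the proof of Theorem~\ref{thm:asymtD_n}, together with $y_n=h_0x_{n+1}$ from~\eqref{eq:yfromx}, into~\eqref{eq:ivp} and simplify using $c_1=1/(2-\E{Z})$ from Corollary~\ref{cor:cfs}. The paper does exactly this with $r=1$, $l=0$, and the parity bookkeeping you worry about resolves automatically because both $x_n$ and $D_n$ carry the same $(-1)^n$ factor. Your alternative of evaluating the limits via $\Xi(s)$ from Theorem~\ref{thm:Phi} is not circular in the paper's logical arrangement (Section~\ref{sec:alt} derives $\Xi(s)$ independently of Section~\ref{sec:asymp_exp}), but it is a detour: Theorem~\ref{thm:Phi} hands you $\vphi(1)$ as $\Xi(0)$ directly and $\vphi(0)$ from one application of~\eqref{eq:main}, with no need for the limit formulas at all, so you would not actually be \emph{evaluating} the limits, you'd be replacing them.

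The genuine error is in the imprimitive case. Proposition~\ref{lemma:imprDn} does \emph{not} give $D_{2n}=1$ and $D_{2n+1}=-1$. It gives $D_{2n}=h_0 x_{2n}x_{2n+2}$ and $D_{2n+1}=-h_0x_{2n+2}^2$ with $x_{2n}$ non-decreasing from $x_0=1$; combined with Proposition~\ref{prop:xn_mon} this yields only the inequalities of Conjecture~\ref{conj:Dn}, and in general $\abs{D_n}\to\infty$. Consequently~\eqref{eq:ivp} does not collapse to $\vphi(0)=\lim(y_{n+1}-y_n)$, $\vphi(1)=\lim(x_n-x_{n+1})$ — those limits diverge. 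Your parenthetical shortcut is also off: setting $u=0$ in~\eqref{eq:main} with $h_1=0$ gives $\vphi(2)=\vphi(0)/h_0$, not $\vphi(1)=\vphi(0)/h_0$; you still need the even/odd identification $\vphi(1)=\vphi(2)$ (valid in the imprimitive case, but it must be invoked). The paper handles this case by a different and cleaner reduction: since $Z$ is supported on even integers, pass to the $\ka=1$ model $W_1$ with claims $Z/2$, use the classical $\ka=1$ formula $\vphi_1(0)=1-\E{Z/2}$ to get $\vphi(0)=(2-\E{Z})/2$, and then apply the $\ka=1$ recursion to extract $\vphi(1)=\vphi_1(1)$. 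You should replace the false $D_n=\pm1$ claim with either that reduction or a correctly stated even/odd stationarity argument.
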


On the other hand, the statement of Corollary \ref{cor:ivp} can be derived differently - see Section \ref{sec:alt}. Moreover, arguments given in Section \ref{sec:alt} allows to set up the generating function of $\vphi(u+1),\,u\in\mathbb{N}_0$.

\begin{theorem}\label{thm:Phi}
The generating function $\Xi(s):=\sum_{n=0}^{\infty}\vphi(n+1)s^n$ of the ultimate time survival probability satisfies
\[
\Xi(s) = \frac{(2-\E{Z})^+(1+\al s)}{(1+\al)(H(s)-s^2)},
\]
where $\al$ and $H(s)$ are the same as in Corollary \ref{cor:ivp}, and, for $a \in \mathbb{R}$, $a^+ = \max\{0, a\}$ is the positive part function.
\end{theorem}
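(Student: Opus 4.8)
The plan is to deduce Theorem~\ref{thm:Phi} from the initial values already recorded in Corollary~\ref{cor:ivp} by means of a single generating-function identity extracted from the recurrence \eqref{eq:main}. Throughout fix $\kappa=2$ and write $\D=\{s\in\C:\abs{s}<1\}$. Since $0\leqslant\vphi(n+1)\leqslant1$, the series $\Xi(s)=\sum_{n=0}^{\infty}\vphi(n+1)s^n$ converges and is holomorphic on $\D$, and $H(s)=\sum_{k=0}^{\infty}h_ks^k$ converges on $\overline{\D}$.

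First I would turn the recurrence into a functional equation. Write \eqref{eq:main} with $\kappa=2$ as $\vphi(u)=\sum_{n=1}^{u+2}h_{u+2-n}\vphi(n)$, valid for every $u\in\N_0$; multiply by $s^u$ and sum over $u\geqslant0$. For $\abs{s}<1$ the double series converges absolutely (the inner sum is at most $1$, so the total is bounded by $(1-\abs{s})^{-1}$), so Fubini permits interchanging the two summations. The left side is $\sum_{u\geqslant0}\vphi(u)s^u=\vphi(0)+s\,\Xi(s)$. On the right side, collecting the coefficient of each $\vphi(n)$ gives $s^{-1}(H(s)-h_0)$ for $n=1$ and $s^{n-2}H(s)$ for $n\geqslant2$; using $\sum_{n\geqslant2}\vphi(n)s^{n-2}=s^{-1}(\Xi(s)-\vphi(1))$ and simplifying (the value at $s=0$ being recovered by continuity), one obtains
\[
\Xi(s)\bigl(H(s)-s^2\bigr)=s\,\vphi(0)+h_0\,\vphi(1),\qquad s\in\D.
\]

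Next I would substitute the initial values. If $\E{Z}\geqslant2$, then $\vphi(n)=0$ for all $n$ by \cite[Theorem 9]{GS}, so $\Xi\equiv0$, and since $(2-\E{Z})^+=0$ the asserted formula holds. If $\E{Z}<2$, Corollary~\ref{cor:ivp} gives $\vphi(0)=\al(2-\E{Z})/(1+\al)$ and $\vphi(1)=(2-\E{Z})/\bigl(h_0(1+\al)\bigr)$; in the sub-case $\mathbb{P}(Z\in2\N_0+1)=0$ this is the first part of that corollary, corresponding to $\al=1$ and the root $s=-1$ of $H(s)=s^2$. Plugging these in, the right-hand side of the functional equation collapses to $(2-\E{Z})^+(1+\al s)/(1+\al)$, hence
\[
\Xi(s)\bigl(H(s)-s^2\bigr)=\frac{(2-\E{Z})^+(1+\al s)}{1+\al},\qquad s\in\D.
\]
It remains to divide by $H(s)-s^2$ and confirm that the quotient $\dfrac{(2-\E{Z})^+(1+\al s)}{(1+\al)\bigl(H(s)-s^2\bigr)}$ is a bona fide holomorphic function on $\D$, equal to $\Xi(s)$. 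Away from the zeros of $H(s)-s^2$ this is immediate. At any zero $s_0\in\D$ of $H(s)-s^2$, the displayed identity forces the numerator to vanish at $s_0$, and since $\Xi$ is holomorphic, hence bounded, near $s_0$, the quotient has only a removable singularity there; it therefore extends holomorphically over all of $\D$ and coincides with $\Xi(s)$. (The zeros of $H(s)-s^2$ in $\overline{\D}$ are described in Section~\ref{sec:zero_locs}; the only one that can lie in $\D$ is the real point $s=-\al^{-1}$, where indeed $1+\al s=0$.) This gives the claimed formula.

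Given Corollary~\ref{cor:ivp}, the argument is essentially bookkeeping, so I do not expect a serious obstacle; the two points that need a little care are the justification of the double-sum interchange and the check that the closed form carries no spurious poles in $\D$. For completeness I note that the identity can also be reached along the probabilistic lines of Section~\ref{sec:alt}: setting $M=\sup_{n\geqslant1}\sum_{i=1}^{n}(Z_i-2)$ one has $\vphi(u+1)=\mathbb{P}(M\leqslant u)$ for $u\in\N_0$, whence $\Xi(s)=\E{s^{M}}/(1-s)$, and a Wiener--Hopf / ladder-height computation for the negative-drift walk $\sum_{i=1}^{n}(Z_i-2)$ evaluates $\E{s^{M}}$ and reproduces the same expression for $\Xi(s)$.
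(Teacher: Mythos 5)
Your argument is correct, and the functional equation you derive is sound: multiplying the recurrence \eqref{eq:main} with $\kappa=2$ by $s^u$, summing over $u\geqslant 0$ and interchanging (justified by absolute convergence) indeed produces $\Xi(s)\bigl(H(s)-s^2\bigr)=s\,\vphi(0)+h_0\,\vphi(1)$, into which the values from Corollary~\ref{cor:ivp} substitute cleanly to give the claimed formula, with the $(2-\E{Z})^+$ taking care of the trivial $\E{Z}\geqslant 2$ case. However, your route is genuinely different from the paper's. The paper does \emph{not} take Corollary~\ref{cor:ivp} as input; instead it derives Theorem~\ref{thm:Phi} from scratch by the stationarity identity $(M^++Z-2)^+\dist M^+$ for the maximum of the reflected walk (Lemma~\ref{lem:stat}). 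Conditioning on $M^+$ in both $\E{(M^++Z-2)^+-M^+}=0$ and $\E{s^{(M^++Z-2)^+}}=\E{s^{M^+}}$ produces two linear relations, \eqref{eq:cond1} and (after evaluating \eqref{eq:lygtis} at $s=-\alpha^{-1}$) \eqref{eq:cond2}, in the unknowns $\pi_0,\pi_1$; solving these yields the p.g.f. $G(s)$ of $M^+$ and hence $\Xi(s)=G(s)/(1-s)$, and at the end recovers Corollary~\ref{cor:ivp} as a by-product. Thus the paper's Section~\ref{sec:alt} is an independent, probabilistic derivation that re-proves the initial values, whereas your proof is shorter but leans on the asymptotic analysis of $D_n$ through which Corollary~\ref{cor:ivp} was established in Section~\ref{sec:asymp_exp}. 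Both are valid; your version buys brevity at the cost of that dependence, while the paper's buys a self-contained alternative route to both results. One small caveat: Theorem~\ref{thm:Phi} refers to $\alpha$ ``as in Corollary~\ref{cor:ivp},'' where $\alpha$ is only defined in the primitive case $\mathbb{P}(Z\in 2\N_0+1)\neq 0$; your convention $\alpha=1$ for the imprimitive case is a reasonable reading (it makes $s=-1$ the relevant root of $H(s)=s^2$ and the formulas collapse to the first part of the corollary), but you should state it explicitly rather than leave it implicit.
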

We derive the statement of Theorem \ref{thm:Phi} in Section \ref{sec:alt} too. 

\section{Generating functions}\label{sec:genfun}

Recall that the probabilities $h_n = \mathbb{P}(Z=n) \geqslant 0$ for $n\in\mathbb{N}_0$ satisfy  $\sum_{n\geqslant 0} h_n = 1$. Here we require $h_0 > 0$. The probability generating function $H(s)$ of the r.v. $Z$ is defined as the power series

$$
H(s) := \sum_{n \geqslant 0} h_n s^n,\, s\in\mathbb{C}.
$$

We call an arbitrary power series \emph{imprimitive}, if there exists an integer $d \geqslant 2$, such that $d$ divides $n$ (denoted $d\mid n$) whenever $s^n$ is present in the series. In particular, $H(s)$ is imprimitive when $h_n \neq 0$ only for $n$ divisible by $d$. In that case, one can re-write $H(s)$ as $H(s)=H_1(s^d)$ for a p.g.f. $H_1(s)$ of the r.v. $Z/d$. If no such $d \geqslant 2$ exists, then we call $H(s)$ \emph{primitive}.

For $\kappa=2$, the fact that the power series $H(s)-s^2$ is primitive means $h_{2n+1} \ne 0$ for at least one $n \in \N_0$; that is, $\mathbb{P}(Z\in 2\N_0 +1) \ne 0$. If $H(s)$ is not primitive, it means that $\mathbb{P}(Z \in 2\N_0)=1$. If such situation arises, one could consider the process $W_1(n)$, obtained by replacing each $Z_i$ with $Z_i/2$ and $\kappa=2$ with $\kappa/2=1$ in Eq. \eqref{eq:model}. By denoting the ultimate time survival probability function of $W_1(n)$ by $\vphi_1(u)$, one can show that, in the imprimitive case, $\vphi(2u)=\vphi(2u-1)=\vphi_1(u)$. So, it would be sufficient to consider the survival probabilities of the process $W_1(n)$ with the reduced income rate $\kappa=1$ instead of $W(n)$ with $\kappa=2$.

We also use the notations
\[
\D = \left\{s \in \mathbb{C}: \abs{s} < 1 \right\},\, \partial\D = \left\{s \in \mathbb{C}: \abs{s} = 1 \right\},\,
\overline{\D} = \left\{s \in \mathbb{C}: \abs{s} \leqslant 1 \right\}
\]
for the open unit disk and the unit circle in a complex plane $\mathbb{C}$.

The generating functions of sequences $x_n$, $y_n$ $n \in \N_0$ from \eqref{def:xn}, \eqref{def:yn} for $s\in\mathbb{C}$ are defined by
\begin{equation}\label{def:X}
    X(s) := \sum_{n \geqslant 0} x_ns^n, \qquad Y(s) := \sum_{n \geqslant 0} y_ns^n.
\end{equation}
From \eqref{def:xn} we have
\[
x_{n-2}=\sum_{i=0}^{n}x_i h_{n-i}-x_0h_n, \, n\geqslant 2.
\]
From this,
\[
\sum_{n\geqslant2}x_{n-2} s^n=\sum_{n\geqslant2}\left(\sum_{i=0}^{n}x_i h_{n-i}\right)s^n-x_0\sum_{n\geqslant2}h_n s^n
\]
or
\[
s^2X(s) = X(s)H(s) -x_0h_0 - (x_1h_0+x_0h_1)s - x_0(H(s)-h_0-h_1s),
\]
which simplifies to
\begin{equation}\label{eq:genX}
s^2X(s) = X(s)H(s) - x_1h_0s - x_0H(s).    
\end{equation}
From initial conditions $x_0=1$, $x_1=0$, one obtains
\begin{equation}\label{eq:X}
X(s) = \frac{H(s)}{H(s)-s^2}.
\end{equation}
By replacing $x_n$ with $y_n$ and using the appropriate initial conditions $y_0=0$, $y_1=1$ in \eqref{eq:genX}, one obtains
\begin{equation}\label{eq:Y}
Y(s) = \frac{h_0s}{H(s)-s^2}.
\end{equation}
Thus, from (\ref{eq:X}) and (\ref{eq:Y})
\begin{equation}\label{eq:eliminate}
h_0X(s)-sY(s) = h_0.
\end{equation}

The power series expansion of (\ref{eq:eliminate}) yields
\begin{equation}\label{eq:yfromx}
h_0x_n - y_{n-1} =0,\, n \geqslant 1 \quad \text{ or } \quad y_n = h_0x_{n+1},\, n \geqslant 0.
\end{equation}

Then, $D_n$ in \eqref{def:Dn} becomes
\begin{equation}\label{eq:Dx}
     D_n = \begin{vmatrix}
            x_n & y_n\\
            x_{n+1} & y_{n+1}\\
        \end{vmatrix} =
         \begin{vmatrix}
            x_n& h_0 x_{n+1}\\
            x_{n+1}& h_0 x_{n+2}\\
        \end{vmatrix} = h_0 \left(x_n x_{n+2} - x_{n+1}^2\right).
\end{equation}

Thus, $D_n$ is $h_0$ multiple of Hankel determinant of the second order, see \cite[Chapter 10]{Gantmacher}. 

\begin{proposition}\label{prop:xn_mon}
For every $n\in \mathbb{N}_0$,
\[
1\leqslant x_{2n} \leqslant x_{2n+2} \qquad \text{ and } \qquad  x_{2n+3} \leqslant x_{2n+1} \leqslant 0.
\]
\end{proposition}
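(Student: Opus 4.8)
\emph{Proof plan.} The plan is to deduce everything from a single strong induction based on one rearrangement of the defining recurrence \eqref{def:xn}. Writing \eqref{def:xn} in the convolution form already used above to obtain \eqref{eq:genX} (and using $x_0=1$, $x_1=0$), one has
\[
x_m=\sum_{j=0}^{m+1}h_j\,x_{m+2-j},\qquad m\geqslant 0,
\]
hence $h_0x_{m+2}=x_m-\sum_{j=1}^{m+1}h_jx_{m+2-j}$. Subtracting $h_0x_m$ from both sides and splitting $x_m(1-h_0)=x_m\sum_{j\geqslant 1}h_j$ according to $j\leqslant m+1$ or $j\geqslant m+2$, this becomes the key identity
\[
h_0\bigl(x_{m+2}-x_m\bigr)=\sum_{j=1}^{m+1}h_j\,(x_m-x_{m+2-j})+x_m\sum_{j\geqslant m+2}h_j,\qquad m\geqslant 0.
\]
Every ingredient of the proof lives inside this identity.

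I would then prove, by induction on $N\geqslant 0$, the statement
\[
P(N):\qquad 1\leqslant x_0\leqslant x_2\leqslant\cdots\leqslant x_{2N+2}\quad\text{and}\quad 0\geqslant x_1\geqslant x_3\geqslant\cdots\geqslant x_{2N+3}.
\]
The base case $P(0)$ is the explicit evaluation $x_0=1$, $x_1=0$, $x_2=1/h_0\geqslant 1$, $x_3=-h_1/h_0^2\leqslant 0$ (which also comes straight out of the identity with $m=0,1$). For the inductive step, assume $P(N-1)$. First apply the identity with $m=2N$: for each $j\in\{1,\dots,2N+1\}$ the difference $x_{2N}-x_{2N+2-j}$ is $\geqslant 0$ — if $j$ is even, then $x_{2N+2-j}$ is an even‑indexed term of index $\leqslant 2N$ and the inequality is part of $P(N-1)$; if $j$ is odd, then $x_{2N+2-j}$ is odd‑indexed, so $x_{2N+2-j}\leqslant 0<1\leqslant x_{2N}$ — and the tail term $x_{2N}\sum_{j\geqslant 2N+2}h_j$ is $\geqslant 0$ since $x_{2N}>0$; hence $x_{2N+2}\geqslant x_{2N}\geqslant 1$. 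Then apply the identity with $m=2N+1$: for each $j\in\{1,\dots,2N+2\}$ the difference $x_{2N+1}-x_{2N+3-j}$ is $\leqslant 0$ — if $j$ is even, then $x_{2N+3-j}$ is an odd‑indexed term of index $\leqslant 2N+1$ and the inequality is part of $P(N-1)$; if $j$ is odd, then $x_{2N+3-j}$ is an even‑indexed term, hence $\geqslant 1$ (for $j=1$ this uses the inequality $x_{2N+2}\geqslant 1$ just established in the previous sentence), while $x_{2N+1}\leqslant 0$ — and the tail term $x_{2N+1}\sum_{j\geqslant 2N+3}h_j$ is $\leqslant 0$; hence $x_{2N+3}\leqslant x_{2N+1}\leqslant 0$. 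This establishes $P(N)$, and the proposition is exactly the conjunction of all the $P(N)$.

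There is no deep obstacle: the single idea is spotting the rearrangement of \eqref{def:xn} into the identity above, after which the parity–alternating sign pattern of the $x_n$ makes every summand on the right‑hand side manifestly of the correct sign under the induction hypothesis. The only points that require a little care are (i) carrying out the two inequalities of the inductive step in the correct order, since the $j=1$ summand of the $m=2N+1$ instance of the identity needs $x_{2N+2}\geqslant 1$; (ii) noting that the $j=2$ summand contributes $x_m-x_m=0$; and (iii) treating the small‑index boundary cases $m=0,1$ directly — all of which are routine. I expect this inductive argument to be short and self‑contained; an alternative via the generating function $X(s)=H(s)/(H(s)-s^2)$ from \eqref{eq:X} seems less direct here, so I would not pursue it.
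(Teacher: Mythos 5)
Your proof is correct and takes essentially the same approach as the paper: the same induction on $n$, using the recurrence \eqref{def:xn} directly together with the parity-based sign pattern ($x_{\text{even}}\geqslant 1$, $x_{\text{odd}}\leqslant 0$) to bound the convolution term by term. The ``key identity'' you derive is just an algebraic rearrangement of the estimate the paper carries out in place (the paper bounds $\sum h_j x_{m+2-j}\leqslant x_m(1-h_0)$ rather than rewriting it as $\sum h_j(x_m-x_{m+2-j})+x_m\sum_{j\geqslant m+2}h_j$), so the two arguments are mathematically identical.
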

\begin{proof}[Proof of Proposition \ref{prop:xn_mon}.]
For $n=0$, $x_0=1\leqslant1/h_0=x_2$ and $x_1=0\geqslant-h_1/h^2_0=x_3$. By induction,
\begin{align*}
&x_{2n+2}=\frac{1}{h_0}\left(x_{2n}-\sum_{i=1}^{2n+1}h_{2n+2-i}x_i
\right)=\\
&=\frac{1}{h_0}\left(x_{2n}-(h_{2n+1}x_1+\ldots+h_1x_{2n+1})
-(h_{2n}x_2+\ldots+h_2x_{2n})\right) \\
&\geqslant
\frac{1}{h_0}\left(x_{2n}-x_{2n}(h_1+h_2+\ldots)\right)=x_{2n}
\end{align*}
and
\begin{align*}
&x_{2n+3}=\frac{1}{h_0}\left(x_{2n+1}-\sum_{i=1}^{2n+2}h_{2n+3-i}x_i
\right)\\
&=\frac{1}{h_0}\left(x_{2n+1}-(h_{2n+2}x_1+\ldots+h_2x_{2n+1})
-(h_{2n+1}x_2+\ldots+h_1x_{2n+2})\right)\\
&\leqslant
\frac{1}{h_0}\left(x_{2n+1}-x_{2n+1}(h_1+h_2+\ldots)\right)=x_{2n+1}.
\end{align*}
Similar inequalities were obtained in \cite{DS} for a different model than defined in (\ref{eq:model}).
\end{proof}
It is curious that the monotonicity property from Proposition \ref{prop:xn_mon} is sufficient to establish Conjecture \ref{conj:Dn} when $H(s)-s^2$ is imprimitive.
\begin{proposition}\label{lemma:imprDn}
If $\mathbb{P}(Z\in 2\N_0+1) = 0$, then Conjecture \ref{conj:Dn} is true.
\end{proposition}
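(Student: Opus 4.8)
The plan is to use the hypothesis $\mathbb{P}(Z\in 2\N_0+1)=0$ to reduce everything to the vanishing of the odd-indexed terms of $(x_n)$, after which Conjecture \ref{conj:Dn} becomes a direct consequence of Proposition \ref{prop:xn_mon}. First I would observe that, since $h_{2k+1}=0$ for all $k$, the p.g.f.\ $H(s)$ is an even power series, hence $H(s)-s^2$ is even as well and has nonzero constant term $h_0>0$; therefore $X(s)=H(s)/(H(s)-s^2)$ from \eqref{eq:X} is a genuine even formal power series, and comparing coefficients in $X(s)=X(-s)$ forces $x_{2n+1}=0$ for every $n\in\N_0$. (Alternatively, the same follows by a one-line induction on \eqref{def:xn}: for odd $n$, every term of $\sum_{i=1}^{n-1}h_{n-i}x_i$ has either an odd $h$-index or an odd $x$-index $i<n$, and $x_{n-2}=0$, so $x_n=0$, with base case $x_1=0$.)

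Next I would feed $x_{2n+1}=0$ into the identity \eqref{eq:Dx} for $D_n$. It collapses to $D_{2n}=h_0\,x_{2n}x_{2n+2}$ and $D_{2n+1}=-h_0\,x_{2n+2}^2$. At this point Proposition \ref{prop:xn_mon} supplies the monotone chain $1\leq x_0\leq x_2\leq x_4\leq\cdots$, and \eqref{def:xn} gives $x_2=1/h_0$, so that $x_{2n}\geq 1$ for all $n$ and $x_{2n+2}\geq 1/h_0$ for all $n\geq 0$. The four required inequalities then drop out: $D_{2n}=h_0 x_{2n}x_{2n+2}\geq h_0\cdot 1\cdot h_0^{-1}=1$; $D_{2n+1}=-h_0 x_{2n+2}^2\leq -h_0 h_0^{-2}=-h_0^{-1}\leq -1$ since $h_0\in(0,1]$; $D_{2n+2}-D_{2n}=h_0 x_{2n+2}(x_{2n+4}-x_{2n})\geq 0$ since $x_{2n+2}>0$ and $x_{2n+4}\geq x_{2n}$; and $D_{2n+1}-D_{2n+3}=h_0(x_{2n+4}^2-x_{2n+2}^2)\geq 0$ since $0<x_{2n+2}\leq x_{2n+4}$.

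I do not expect a genuine obstacle here --- this is the ``almost trivial'' case flagged in the introduction. The only two spots calling for a little care are checking that $H(s)-s^2$ is not identically zero, so that $X(s)$ really is an even power series (hence the odd coefficients vanish), and remembering to use $h_0\leq 1$ in the bounding step, so that the estimates land at $1$ and $-1$ rather than at $h_0$ and $-h_0$; everything else is bookkeeping on top of Proposition \ref{prop:xn_mon}.
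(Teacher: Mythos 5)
Your proof is correct and follows the same route the paper takes: infer $x_{2n+1}=0$ from the evenness of $X(s)$, reduce $D_n$ via \eqref{eq:Dx} to $D_{2n}=h_0x_{2n}x_{2n+2}$ and $D_{2n+1}=-h_0x_{2n+2}^2$, and then invoke the monotonicity in Proposition \ref{prop:xn_mon}. You are slightly more explicit than the paper in the final bookkeeping step, noting that $x_2=1/h_0$ and $h_0\leq 1$ are what push the bounds to $1$ and $-1$ rather than $h_0$ and $-h_0$, which is a welcome clarification of a point the paper leaves implicit.
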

\begin{proof}[Proof of Proposition \ref{lemma:imprDn}.]
The equality $h_{2n+1}=0$ yields $H(s)=H_1(s^2)$ for a p.g.f. $H_1(s)$. By \eqref{eq:X}, we have $X(s)=X_1(s^2)$ for $X_1(s)=H_1(s)/(H_1(s)-s)$. It follows that $X(s)$ is a power series of $s^2$, which implies $x_{2n+1}=0$ for all $n\in\mathbb{N}_0$. Then, by \eqref{eq:Dx},
\[
D_{2n} = h_0x_{2n}x_{2n+2}, \qquad D_{2n+1}=-h_0x_{2n+2}^2. 
\]
As $x_0=1$ and $x_{2n}$ is non--decreasing by Proposition \ref{prop:xn_mon}, the sequence $D_n$ has the required properties.
\end{proof}
However, such a simple trick is not sufficient to prove Conjecture \ref{conj:Dn} in the primitive case.

\section{Location and properties of zeros}\label{sec:zero_locs}

Following \cite[Ch.VII, Sec.5]{Feller} in the technique of analysis, we prove a series of technical lemmas about the location of zeros and the vanishing multiplicity of the power series of the form $H(s)-s^\ka$, $s \in \mathbb{C}$ and $\kappa \in \N$, where $H(s)$ is the p.g.f. of the r.v. $Z$. The power series of this form $H(s)-s^\ka$ appears in the denominators of generating functions $X(s)=H(s)/(H(s)-s^\kappa)$ for the corresponding recurrence \eqref{def:xn} with arbitrary natural $\ka$ in \eqref{eq:main}. The choice $\kappa=2$ corresponds the generating functions derived in Section \ref{sec:genfun} and Conjecture \ref{conj:Dn}. We allow arbitrary $\kappa \in \N$ in this section for the future references to the lemmas presented here. However, to single out the main result, which is used in this work from this auxiliary section, we would like to highlight Corollary \ref{cor:m2zerosH}. It provides location and multiplicity of roots of $H(s)-s^2$. 

Let us now recall the very basic properties of p.g.f. $H(s)$.

\begin{lemma}\label{prop:Hprops}
The function $H(s)$ is holomorphic in $\D$ and continuous on its boundary $\pD$. In addition, if $H^{(k)}(1) < +\infty$, $k \in \N$, then the derivatives $H^{(j)}(s)$, $0 \leqslant j \leqslant k$ are continuous on $\overline{\D}$.  
\end{lemma}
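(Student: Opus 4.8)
The plan is to exploit the fact that $H$ is a power series with non-negative coefficients $h_n$ satisfying $\sum_{n \geqslant 0} h_n = 1$. Since $0 \leqslant h_n \leqslant 1$, the radius of convergence of $\sum h_n s^n$ is at least $1$, so $H$ is holomorphic in $\D$. For continuity on $\pD$ (in fact on all of $\overline{\D}$), I would observe that for $\abs{s} \leqslant 1$ one has $\abs{h_n s^n} \leqslant h_n$, and $\sum_{n} h_n = 1 < +\infty$; hence the Weierstrass $M$-test gives absolute and uniform convergence of $\sum h_n s^n$ on $\overline{\D}$, and a uniform limit of continuous functions (the partial sums) is continuous. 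This settles the first sentence.

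For the second sentence, I would first fix the meaning of $H^{(k)}(1)$: differentiating term by term in $\D$ gives $H^{(j)}(s) = \sum_{n \geqslant j} n(n-1)\cdots(n-j+1)\, h_n s^{n-j}$ for every $j$, and since all terms are non-negative for $s \in [0,1)$, the monotone (Abel) radial limit $H^{(k)}(1) := \lim_{s \uparrow 1} H^{(k)}(s) = \sum_{n \geqslant k} n(n-1)\cdots(n-k+1)\, h_n$ is well defined in $[0, +\infty]$; the hypothesis is that it is finite, i.e.\ the $k$-th factorial moment $\mathbb{E}[Z(Z-1)\cdots(Z-k+1)]$ is finite. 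The key elementary observation is that for $0 \leqslant j \leqslant k$ and $n \geqslant k$ one has $n(n-1)\cdots(n-j+1) \leqslant n(n-1)\cdots(n-k+1)$, because the right-hand product is obtained from the left by appending the extra factors $(n-j),\dots,(n-k+1)$, each of which is $\geqslant 1$. Consequently
\[
\sum_{n \geqslant j} n(n-1)\cdots(n-j+1)\, h_n \;\leqslant\; \sum_{j \leqslant n < k} n(n-1)\cdots(n-j+1)\, h_n \;+\; H^{(k)}(1) \;<\; +\infty .
\]
Thus for each $j$ with $0 \leqslant j \leqslant k$ the series defining $H^{(j)}$ again has non-negative coefficients with finite sum, so by the same $M$-test argument it converges absolutely and uniformly on $\overline{\D}$ and represents a continuous function there. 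Since term-by-term differentiation is legitimate on the open disk, this continuous function is exactly $H^{(j)}$ on $\D$, and its boundary values furnish the continuous extension to $\overline{\D}$.

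I do not expect any serious obstacle here; the only point requiring a little care is the interpretation of the symbol $H^{(k)}(1)$ for a function a priori defined only on $\D$ — it must be read as the (possibly infinite) radial limit, equivalently as the $k$-th factorial moment — together with the bookkeeping with falling factorials showing that finiteness of the $k$-th factorial moment forces finiteness of all lower ones. Everything else is the Weierstrass $M$-test applied to a dominating convergent series of constants.
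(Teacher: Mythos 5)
Your proof is correct and follows essentially the same route as the paper: apply the Weierstrass $M$-test, dominating $\sum h_n \abs{s}^n$ by $\sum h_n = 1$ for the first part and dominating the $j$-th derivative series by factorial-moment sums for the second. You actually make explicit a step the paper leaves implicit — that finiteness of $H^{(k)}(1)$ forces finiteness of all lower factorial moments, via the bound $n(n-1)\cdots(n-j+1) \leqslant n(n-1)\cdots(n-k+1)$ for $n \geqslant k$ plus the finitely many remaining terms — which is a small but worthwhile clarification of why each $H^{(j)}$, $0 \leqslant j \leqslant k$, converges uniformly on $\overline{\D}$.
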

\begin{proof}[Proof of Lemma \ref{prop:Hprops}.]
For $\abs{s} \leqslant 1$, $\sum_{n \geqslant 0}\abs{h_ns^n} \leqslant \sum_{n \geqslant 0}h_n =1$, so the convergence of the power series $H(s)$ is absolute and uniform. It follows that $H(s)$ is holomorphic inside the unit disk and continuous on its boundary.

Similarly, the sum of absolute values of the terms in the series $H^{(k)}(s)$ in $\overline{\D}$ is less or equal to $ \sum_{n \geqslant 1}n!h_n/(n-k)! = H^{(k)}(1) < +\infty$. Thus, $H^{(k)}(s)$ converges uniformly to a continuous function for $\abs{s} \leqslant 1$. This implies that the derivatives of $H(s)$ of order $\leqslant k$ are well defined and continuous on $\partial\D$.  
\end{proof}

\begin{lemma}\label{lem:zerosH}
The function $H(s)-s^\ka$ has at most $\ka$ zeros in $\D$, counted with their multiplicities.
\end{lemma}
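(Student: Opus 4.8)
The plan is to prove the bound by a Rouché argument applied to the one–parameter deformation $F_t(s):=tH(s)-s^\ka$, $t\in[0,1]$: for $t<1$ the comparison of $F_t$ with $-s^\ka$ on the unit circle is \emph{strict} and forces exactly $\ka$ zeros in $\D$, and letting $t\to 1$ can only push zeros out through the boundary, never create new ones. Concretely, since $h_0>0$ the function $F(s):=H(s)-s^\ka$ is not identically zero, so its zeros in $\D$ are isolated; if $F$ had $\ka+1$ zeros in $\D$ (counted with multiplicity), they would all lie in some disk $\{|s|\leqslant R\}$ with $R<1$, and one could pick $r\in(R,1)$ with $F$ zero–free on $\{|s|=r\}$ (the moduli of the zeros of $F$ form an at most countable set, so the admissible radii are dense in $(0,1)$). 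Hence it suffices to show that for every such $r$ the function $F$ has at most $\ka$ zeros in $\{|s|<r\}$.

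To bound the count inside a fixed disk $\{|s|<r\}$, I would use the family $F_t(s)=tH(s)-s^\ka$, with $F_1=F$ and $F_0(s)=-s^\ka$. The only analytic input needed is the elementary maximum–modulus estimate $|H(s)|\leqslant\sum_{n\geqslant 0}h_n=1$ for $|s|\leqslant 1$, together with continuity of $H$ on $\overline{\D}$ (Lemma \ref{prop:Hprops}). On the unit circle this gives $|F_t(s)-(-s^\ka)|=t\,|H(s)|\leqslant t<1=|-s^\ka|$ for every $t\in[0,1)$, the inequality being strict at \emph{every} point of $\pD$ precisely because of the factor $t<1$. By Rouché's theorem (in the form requiring only holomorphy in $\D$ and continuity on $\overline{\D}$), each $F_t$ with $t\in[0,1)$ has exactly as many zeros in $\D$ as $-s^\ka$, namely $\ka$ counted with multiplicity.

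Finally I would let $t\to 1^{-}$. Since $|F_t(s)-F(s)|=(1-t)\,|H(s)|\leqslant 1-t$ uniformly on $\overline{\D}$, and since $m:=\min_{|s|=r}|F(s)|>0$ by the choice of $r$, for $t$ close enough to $1$ we get $|F_t(s)-F(s)|<m\leqslant|F(s)|$ on the circle $\{|s|=r\}$. A second application of Rouché on that circle shows that $F$ has the same number of zeros in $\{|s|<r\}$ as $F_t$ does; since the latter is at most the total number $\ka$ of zeros of $F_t$ in all of $\D$, we conclude that $F$ has at most $\ka$ zeros in $\{|s|<r\}$, and therefore at most $\ka$ zeros in $\D$.

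The genuinely delicate point — and the reason a bare application of Rouché on $\pD$ does not work — is that $F(s)=H(s)-s^\ka$ vanishes on the boundary at $s=1$ (because $H(1)=1$), so $\min_{\pD}|F|=0$ and no comparison estimate can be made strict there. The deformation $F_t$ with $t<1$, for which $|F_t|$ stays bounded below on $\pD$, together with the exhaustion of $\D$ by the subdisks $\{|s|<r\}$, is exactly the device that sidesteps this obstacle; everything else is routine. If one wishes to avoid the boundary version of Rouché altogether, the same argument runs using only circles $\{|s|=r'\}$ with $r'<1$, at the minor cost of carrying the constraint $t<(r')^\ka$ through the estimates.
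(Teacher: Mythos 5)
Your proof is correct and uses essentially the same idea as the paper: a Rouché comparison on a one--parameter deformation, followed by a limiting argument. Your family $F_t(s) = tH(s) - s^\kappa$ is, up to the overall nonzero factor $1/t$, exactly the paper's family $H(s) - \lambda s^\kappa$ (with $\lambda = 1/t > 1$), so the two families have identical zero sets; the only substantive difference is that where the paper dispatches the limit $\lambda \to 1^{+}$ with a brief appeal to ``continuity of zeros with respect to the parameter,'' you make that step fully rigorous via a second Rouché application on a subcircle $\abs{s}=r<1$ chosen to miss the zeros of $H(s)-s^\kappa$ --- a genuine tightening of the argument, and your closing observation correctly identifies why the naive comparison on $\pD$ fails (because $H(1)=1$ forces $H(s)-s^\kappa$ to vanish on the boundary).
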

\begin{proof}[Proof of Lemma \ref{lem:zerosH}]
For every $s \in \pD$, and every real $\la > 1$, $\abs{H(s)} \leqslant 1 < \abs{\la s^\ka}$. Hence, by Rouch\'e's theorem \cite[Ch.10, Ex.24]{Rudin}, $H(s)-\la s^\ka$ has the same number of zeros in $\D$ as $s^\ka$. By continuity of zeros inside $\D$ with respect to the parameter $\la$, as $\la \to 1^+$, the number of zeros cannot increase when $\la$ reaches $1$ (it can only decrease, if some zeros from $\D$ reach the boundary $\pD$ at $\la=1$).
\end{proof}

In subsequent lemmas, the positive integer $d$ is not restricted to $d \geq 2$ as before (it can also equal to $1$).

\begin{lemma}\label{lem:value1H}
The equality $H(s)= \pm s^\ka$ holds on $\partial \D$ only at points $s$ that satisfy $s^d =1$ for $d \in \N$, such that $d \mid n$ whenever $h_n \ne 0$ and $d \mid \ka$ in '$+s^\ka$' case, $d \mid 2\ka$ in '$-s^\ka$' case. In particular, if $H(s)-s^\ka$ is primitive, then $H(s)=s^\ka$ holds on $\pD$ only at $s=1$.
\end{lemma}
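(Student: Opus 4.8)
The plan is to analyze the modulus equation $|H(s)| = |s^\ka| = 1$ on $\pD$ and extract rigidity from the triangle inequality. First I would write $s = e^{i\theta}$ and observe that $|H(s)| \le \sum_{n \ge 0} h_n |s|^n = \sum_{n\ge 0} h_n = 1$ always holds for $s \in \overline{\D}$, with equality on $\pD$ forcing all the nonzero terms $h_n s^n$ to be positive real multiples of one another (this is the equality case of the triangle inequality for a convergent series of complex numbers, using $h_0 > 0$ as an anchor). Concretely, $|H(s)| = 1$ on $\pD$ implies $h_n s^n = h_n$ for every $n$ with $h_n \ne 0$, i.e. $s^n = 1$ whenever $h_n \ne 0$. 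Letting $d$ be the greatest common divisor of $\{n : h_n \ne 0\}$, this is equivalent to $s^d = 1$, and in that case $H(s) = \sum h_n = 1$.

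Next I would handle the sign. Once we know $s^d = 1$ and $H(s) = 1$, the equation $H(s) = + s^\ka$ becomes $s^\ka = 1$, which combined with $s^d = 1$ is equivalent to $s^{\gcd(d,\ka)} = 1$; but more to the point, since $H(s) = 1$ and we need $s^\ka = 1$, and $s$ already satisfies $s^d = 1$, the condition $d \mid \ka$ is exactly what guarantees every such $s$ works, matching the statement. For the $-s^\ka$ case, $H(s) = -s^\ka$ with $H(s) = 1$ gives $s^\ka = -1$, hence $s^{2\ka} = 1$ and $s^\ka \ne 1$; together with $s^d = 1$ this is captured by requiring $d \mid 2\ka$ (so that $s = e^{\pi i \ell/\ka}$ with appropriate parity is consistent with $s^d=1$). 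I would state both cases uniformly as: the solution set is nonempty iff the corresponding divisibility holds, and then every $d$-th root of unity with the right power behavior is a solution.

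Finally, for the "in particular" clause: if $H(s) - s^\ka$ is primitive, then by definition (in the $\kappa = 2$ setting spelled out in Section~\ref{sec:genfun}, and its evident generalization) there is no integer $e \ge 2$ dividing both all the exponents appearing in $H$ and $\ka$; equivalently $\gcd(\{n : h_n \ne 0\} \cup \{\ka\}) = 1$, so $\gcd(d,\ka) = 1$. Then $s^d = 1$ and $s^\ka = 1$ force $s = 1$, and indeed $H(1) = 1 = 1^\ka$, so $s = 1$ is the unique point of $\pD$ where $H(s) = s^\ka$.

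The main obstacle is not conceptual depth but bookkeeping: making the triangle-inequality equality analysis fully rigorous for an \emph{infinite} series (rather than a finite sum) and cleanly translating the two sign cases and the two divisibility conditions ($d \mid \ka$ versus $d \mid 2\ka$) into one crisp statement without case-chasing. I would isolate the equality case as a small sublemma ("if $|\sum a_n| = \sum |a_n| < \infty$ and $a_0 > 0$ then each $a_n \ge 0$"), apply it with $a_n = h_n s^n$, and then the rest is elementary root-of-unity arithmetic.
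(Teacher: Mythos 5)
Your proof is correct and follows essentially the same route as the paper's: both exploit the rigidity of the equality case in the triangle inequality, use $h_0 > 0$ as the anchor to pin the common argument at $0$, conclude $s^n = 1$ on the support, and then translate the resulting root-of-unity constraints into the stated divisibilities. The one streamlining choice worth adopting from the paper is to take $d = \ord(s)$ (the multiplicative order of $s$ as a root of unity) rather than $d = \gcd\{n : h_n \ne 0\}$: once $s^n = 1$ on the support and $s^\ka = \pm 1$ are established, $\ord(s)$ automatically divides every support index and $\ka$ (respectively $2\ka$) simultaneously, so the lemma's conditions hold for a single $d$ without the extra pass from $d$ to $\gcd(d,\ka)$ that your bookkeeping implicitly requires.
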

\begin{proof}[Proof of Lemma \ref{lem:value1H}.]
In the triangle inequality,
\[
1 = \abs{H(s)s^{-\ka}} \leqslant \sum_{n \geqslant 0}h_n\abs{s^{n-\ka}},
\]
the equality "$=$" is attained only when all non-zero terms have the same complex argument. This implies that $s^{l-k}$ is real positive whenever $h_lh_k \ne 0$. Equality $\abs{s}=1$ implies $s^{l-k}=1$ and all such $s \in \D$ must be roots of unity whose orders divide all the differences $l-k$ for $h_kh_l \ne 0$. Since $h_0 \ne 0$, for such a root of unity of the minimal order $d$, it follows that all $h_l \neq 0$ must lie in some arithmetic progression $l=dj$, $j \in \N$. Thus, one can write $H(s)=H_1(s^d)$. Then, for such a root of unity, $H_1(s^d)=1$ and $H(s)=s^\ka$ imply $s^\ka=1$, and $d \mid \ka$, or $s^\ka=-1$, which means $d \mid 2\ka$. The primitive case now becomes obvious.
\end{proof}
\begin{lemma}\label{lem:ord1}
    Let $r$ denote the order of vanishing of $H(s)-s^\ka$ at $s=1$. If $\mathbb{P}(Z=\ka)<1$, then $r \leqslant 2$.
\end{lemma}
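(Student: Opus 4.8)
\textbf{Proof proposal for Lemma~\ref{lem:ord1}.}
The plan is to show that $H(s)-s^\ka$ cannot vanish faster than quadratically at $s=1$, the crucial observation being that in the borderline regime $\E{Z}=\ka$ the quadratic coefficient equals $\var Z$, which is strictly positive precisely because $\mathbb{P}(Z=\ka)<1$. I would work with the factorization $H(s)-s^\ka=(s-1)g(s)$ valid on $\D$, where
\[
g(s)=\sum_{n\geq 1}h_n\bigl(1+s+\cdots+s^{n-1}\bigr)-\bigl(1+s+\cdots+s^{\ka-1}\bigr),
\]
coming from $H(s)-1=(s-1)\sum_{n\geq1}h_n(1+\cdots+s^{n-1})$ and $s^\ka-1=(s-1)(1+\cdots+s^{\ka-1})$; monotone convergence on the nonnegative power series gives $g(s)\to\E{Z}-\ka$ as $s\to1^-$ (the limit being $+\infty$ if $\E{Z}=+\infty$). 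Since $H(s)-s^\ka$ vanishes to an order exceeding $2$ only if $(H(s)-s^\ka)/(s-1)^2=g(s)/(s-1)$ tends to $0$, it suffices to exhibit a \emph{nonzero} limit, finite or infinite, for this ratio.

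If $\E{Z}\neq\ka$, then $g(1^-)\neq0$, so $g(s)/(s-1)$ tends to $+\infty$ or $-\infty$, which is nonzero --- indeed $H(s)-s^\ka$ then vanishes only to order $1$ (or to an order below $1$ if $\E{Z}=+\infty$). So I may assume $\E{Z}=\ka$, which forces $\E{Z}<+\infty$ and $g(1^-)=0$. Then l'H\^opital's rule gives
\[
\lim_{s\to1^-}\frac{g(s)}{s-1}=\lim_{s\to1^-}g'(s)=\tfrac12\bigl(\mathbb{E}[Z(Z-1)]-\ka(\ka-1)\bigr),
\]
the last equality again by monotone convergence, using $g'(s)=\sum_{n\geq1}h_n\sum_{j=1}^{n-1}js^{j-1}-\sum_{j=1}^{\ka-1}js^{j-1}$. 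Substituting $\ka=\E{Z}$, this simplifies to $\tfrac12\bigl(\E{Z^2}-(\E{Z})^2\bigr)=\tfrac12\var Z$. Finally, $\var Z=0$ would force $Z$ to be almost surely constant, hence $Z=\ka$ almost surely, contradicting $\mathbb{P}(Z=\ka)<1$; therefore $\var Z>0$ (possibly $+\infty$) and the limit is nonzero. Hence $H(s)-s^\ka$ is not $o\bigl((s-1)^2\bigr)$, so $r\leq2$ in every case; moreover $r=2$ exactly when $\E{Z}=\ka$ and $\var Z<+\infty$.

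The only point requiring care is keeping track of possibly infinite $\E{Z}$ or $\E{Z^2}$, which is why I would route everything through the monotone-convergence limits of $g$ and $g'$ instead of differentiating $H$ directly at the boundary point $s=1$; if one prefers the finite-moment conclusions as honest Taylor expansions, Lemma~\ref{prop:Hprops} provides the needed continuity of $H'$ and $H''$ on $\overline{\D}$. The identity ``quadratic coefficient $=\var Z$'' is the heart of the matter, and it is exactly what turns the hypothesis $\mathbb{P}(Z=\ka)<1$ into the bound $r\leq2$.
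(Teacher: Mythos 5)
Your argument is correct, and it reaches the same algebraic heart as the paper's proof --- the identity that, when $\E Z=\ka$, the quadratic coefficient of $H(s)-s^\ka$ at $s=1$ equals $\tfrac12\var Z$, which cannot vanish unless $Z$ is degenerate. The route, however, is genuinely different. The paper works with $H(s)-s^\ka$ directly: it applies the Cauchy mean value theorem repeatedly to $(H(s)-s^\ka)/(s-1)^k$ to produce Eq.~\eqref{eq:lim1}, infers from the assumption $r\geq 3$ that $H'(1),H''(1),H'''(1)$ exist and are finite, reads off $\E Z=\ka$, $\E{Z^2}=\ka^2$, and concludes degeneracy. You instead peel off one factor of $(s-1)$ explicitly, writing $H(s)-s^\ka=(s-1)g(s)$ --- note that your $g(s)$ is just $-G(s)$ from Lemma~\ref{lem:divideH}, so you have essentially rederived that factorization --- and then apply one-sided l'H\^opital to $g(s)/(s-1)$, evaluating $g(1^-)$ and $g'(1^-)$ by monotone convergence of the nonnegative coefficient series.

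The monotone-convergence route buys you a cleaner treatment of possibly infinite moments: you get a well-defined limit in $[0,+\infty]$ for $g(1^-)$ and $g'(1^-)$ without first having to argue that $r\geq 3$ forces $H^{(r)}(1)<+\infty$, which is what the paper has to do implicitly through Eq.~\eqref{eq:lim1}. It also yields, as a by-product, the precise dichotomy $r=2\iff\E Z=\ka$ and $\var Z<\infty$, which the paper's proof by contradiction does not state. The paper's approach is slightly more economical in that it reuses the Taylor-coefficient expression \eqref{eq:lim1}, which is needed again later in Corollary~\ref{cor:cfs}, so the two computations are shared. Both are valid; yours is a touch more self-contained and careful about infinite expectations, while the paper's integrates better with the surrounding machinery.
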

\begin{proof}[Proof of Lemma \ref{lem:ord1}]
    For the real $s \in [0, 1]$, the repeated application of Cauchy Middle Value Theorem for $(H(s)-s^\ka)/(s-1)^k$, when $k\leq r$, implies that finite one sided limits
    \begin{equation}\label{eq:lim1}
        \lim_{s \to 1^{-}}\frac{H(s)-s^\ka}{(s-1)^k} = \begin{cases}
            \frac{H^{(k)}(1)-\ka!/(\ka-k)!}{k!}, & \text{ for } k \leqslant \ka,\\
            \frac{H^{(k)}(1)}{k!}, & \text{ for } k > \ka,
        \end{cases}
    \end{equation}
    exist for $k \leqslant r$. Limits in (\ref{eq:lim1}) must equal $0$ for $k < r$ and, for $k=r$, limit must be finite and non--zero. It follows that $H^{(r)}(1)$ exist and are $< +\infty$. By Lemma \ref{prop:Hprops}, derivatives $H^{(j)}(s)$, $0 \leqslant j \leqslant r$ are continuous on $\overline{\D}$.
    
    Assume that $r \geq 3$. Then, one has $H'(1)=\ka$, $H''(1)=\ka(\ka-1)$, $H'''(1) \not\in \{0, +\infty\}$ by \eqref{eq:lim1}. On the other hand, $H'(1)=\E{Z}$, $H''(1)=\E{Z^2}-\E{Z}$ which implies $\E{Z}=\kappa$, $\E{Z^2}=\kappa^2$. Then, the variance $\var{Z}=0$ and consequently r.v. $Z$ is degenerated: $\mathbb{P}(Z=\ka)=1$, $H(s)=s^\ka$. Therefore, the limit in \eqref{eq:lim1} is zero for all $k$ and the assumption $r\geq3$ is contradicted.
\end{proof}
\begin{lemma}\label{lem:divideH}
For the power series $H(s)$ it holds that $H(s) - s^\ka = (1-s)G(s)$, where $G(s)=\sum_{n\geqslant0}g_n s^n$, and the coefficients
\[
g_n = \begin{cases}
        \sum_{k \leqslant n} h_k, & \text{ if } n < \ka,\\
        -1+\sum_{k \leqslant n} h_k, & \text{ for } n \geqslant \ka.
    \end{cases}
\]
Moreover:
\begin{itemize}
\item If $H'(1) > \ka$, then $G(s)$ has one simple real zero $s\in(0, 1)$, and $G(1)\ne 0$.

\item If $H'(1) = \ka$, then $G(s)$ vanishes for $s\in[0, 1]$ only at $s=1$.

\item If $H'(1) < \ka$, then $G(s) > 0$ for all $s\in[0, 1]$.
\end{itemize}

\end{lemma}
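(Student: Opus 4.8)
The plan is to establish the decomposition $H(s) - s^\ka = (1-s)G(s)$ first, and then treat the three cases via the sign behaviour of $G$ on $[0,1]$. For the factorization: since $H(1) - 1^\ka = \sum_{n\geq 0} h_n - 1 = 0$, the value $s=1$ is a zero of $H(s)-s^\ka$, so divisibility by $(1-s)$ holds at the level of formal power series. To read off the coefficients $g_n$, I would write $H(s) - s^\ka = \sum_{n\geq 0} c_n s^n$ with $c_n = h_n - [n=\ka]$, and use the standard identity that if $\sum c_n s^n = (1-s)\sum g_n s^n$ then $g_n = \sum_{k\leq n} c_k = \sum_{k\leq n} h_k - [\,n\geq \ka\,]$, which is exactly the claimed formula. (Convergence of $G(s)=\sum g_n s^n$ on $\overline{\D}$ is not an issue on $[0,1)$; near $s=1$ one can just work with $G(s) = (H(s)-s^\ka)/(1-s)$ directly, which is continuous on $\overline{\D}$ away from $s=1$ and extends continuously there because $H'(1)$ controls the limit, see below.)

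Next I would analyze $G$ on the real interval $[0,1]$. The key observations are: (i) $g_n$ is nondecreasing in $n$ for $n<\ka$ and again nondecreasing for $n\geq\ka$, with a downward jump of $1$ at $n=\ka$; in particular $g_n \to \sum_{k\geq 0} h_k - 1 = 0$ as $n\to\infty$, so $G(1) = \lim_{s\to 1^-}G(s)$. More usefully, $G(1) = \lim_{s\to 1^-}(H(s)-s^\ka)/(1-s) = -(H'(1)-\ka) = \ka - H'(1)$ by L'Hôpital / the one-sided limit in \eqref{eq:lim1} with $k=1$. So the sign of $G(1)$ is governed precisely by the trichotomy $H'(1) \gtrless \ka$: $G(1) > 0$ iff $H'(1) < \ka$, $G(1) = 0$ iff $H'(1)=\ka$, $G(1)<0$ iff $H'(1) > \ka$. (ii) For $s\in[0,1)$, $H(s) - s^\ka$ and $1-s$ are both real, so $G(s)$ is real; moreover $G(0) = g_0 = h_0 > 0$.

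Now the three cases. If $H'(1) < \ka$: I claim $G(s) > 0$ on all of $[0,1]$. Here I would argue $H(s) - s^\ka > 0$ for $s\in[0,1)$: indeed $H(s) \geq h_0 > 0$ and, comparing $H(s)$ to $s^\ka$ — both equal $1$ at $s=1$, and the derivative comparison $H'(1) < \ka$ means $s^\ka$ approaches $1$ faster from below — one shows $H(s) > s^\ka$ on $[0,1)$; combined with $1-s>0$ this gives $G(s)>0$ on $[0,1)$, and $G(1) = \ka - H'(1) > 0$. A clean way to get $H(s) > s^\ka$ on $[0,1)$ rigorously is to note $G(s) = \sum g_n s^n$ where the negative coefficients are only those with $n\geq \ka$; one can pair them against the positive tail $\sum_{k\geq 0}h_k = 1$ much as in the proof of Proposition \ref{prop:xn_mon}, or simply invoke that $H(s)-s^\ka = (1-s)G(s)$ with $G$ continuous, $G(0)>0$, $G(1)>0$, together with the fact that $H(s)-s^\ka$ has no zero in $(0,1)$ when $H'(1)<\ka$ (if it had a zero $s_*\in(0,1)$ then $G(s_*)=0$, but $G$ is convex-ish... — here I would instead use that $H(s)-s^\ka$ is strictly decreasing in the relevant range, or bound directly). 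If $H'(1) = \ka$: then $G(1)=0$, and for $s\in[0,1)$ the same argument gives $H(s) - s^\ka > 0$, hence $G(s) > 0$ on $[0,1)$, so $G$ vanishes on $[0,1]$ only at $s=1$. If $H'(1) > \ka$: then $G(0) = h_0 > 0$ while $G(1) = \ka - H'(1) < 0$, so by the intermediate value theorem $G$ has a zero in $(0,1)$; for simplicity and to get "exactly one simple zero" I would show $H(s) - s^\ka$ changes sign exactly once on $(0,1)$ — e.g. by checking that $s\mapsto (H(s)-s^\ka)/s^\ka = H(s)/s^\ka - 1$ is strictly... no; cleaner is: $G(s)=\sum_{n<\ka}(\sum_{k\le n}h_k)s^n + \sum_{n\geq\ka}(\sum_{k\le n}h_k - 1)s^n$, where the first block is a polynomial with positive coefficients and the second is a power series with negative coefficients that is increasing toward $0$; hence $G$ is a positive decreasing-then-... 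I would argue $G'$ has controlled sign, or simply that $s^{-\ka+1}$ times something is monotone, giving uniqueness of the zero and simplicity (nonzero derivative there because a double zero would force, via $H(s)-s^\ka = (1-s)G(s)$ and Lemma \ref{lem:ord1}-type reasoning, a degenerate $Z$). Finally $G(1)\neq 0$ is immediate from $G(1) = \ka - H'(1) < 0$.

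The main obstacle I anticipate is the "exactly one simple real zero" claim in the case $H'(1) > \ka$: existence is trivial by IVT, but ruling out a second zero (and a double zero) on $(0,1)$ requires either a convexity/monotonicity argument for $G$ on $(0,1)$ or an argument that $H(s)/s^\ka$ is strictly monotone on $(0,1)$ — the latter is false in general, so I expect the real work is finding the right monotone auxiliary quantity, likely $(H(s)-s^\ka)s^{-\ka}$ or the logarithmic derivative, combined with the convexity of $H$ (since $H'' \geq 0$ on $[0,1]$ as a power series with nonnegative coefficients). That convexity of $H$, together with the fact that $s\mapsto s^\ka$ is also convex with strictly larger second derivative growth near $1$ when matched appropriately, should pin down that $H(s) - s^\ka$, which is $0$ at $s=1$ with negative-ish slope and positive at $s=0$ (equal to $h_0$), crosses zero exactly once; writing $\psi(s) = H(s) - s^\ka$, one has $\psi(0) = h_0 > 0$, $\psi(1) = 0$, $\psi'(1) = H'(1) - \ka > 0$, so $\psi < 0$ just left of $1$, forcing at least one crossing, and $\psi$ convex would force exactly one — but $\psi$ need not be convex since $-s^\ka$ is concave. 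I would therefore fall back on the coefficient description of $G$ to nail monotonicity of $G$ on $[0,1)$ directly: $G(s) = h_0 + \text{(positive poly of degree} <\ka) + \sum_{n\geq\ka} g_n s^n$ with $g_n \uparrow 0$, and a careful pairing of terms (in the spirit of the proof of Proposition \ref{prop:xn_mon}) shows $G$ is strictly decreasing on $[0,1)$ once it becomes relevant, yielding uniqueness and simplicity of its zero. The remaining cases are then routine.
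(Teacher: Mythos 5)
Your proposal correctly sets up the decomposition (coefficient computation for $g_n$, the value $G(1)=\ka-H'(1)$) and correctly identifies the three cases via the sign of $G(1)$ and the intermediate value theorem, and you rightly flag that the real work is the uniqueness and simplicity of the zero. However, you do not close that gap: the several candidate arguments you propose (convexity of $H$, monotonicity of $H(s)/s^\ka$) you yourself discard as unworkable, and the fallback claim that ``$G$ is strictly decreasing on $[0,1)$ once it becomes relevant'' is both vague and, as stated, false near $s=0$: for $\ka\geq 2$ one has $G'(0)=g_1=h_0+h_1\geq 0$, so $G$ is typically \emph{increasing} near $0$. The proposal therefore does not actually produce a proof of the uniqueness/simplicity claim, nor of the $H'(1)=\ka$ case where one must rule out a zero in $(0,1)$.

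The missing ingredient, which is exactly what the paper uses, is Descartes' rule of signs for power series (Curtiss). Observe that the coefficient sequence $(g_n)$ is nonnegative for $n<\ka$ and nonpositive for $n\geq\ka$, so it changes sign at most once; the same is true of the coefficients $ng_n$ of $G'$. Descartes' rule then says $G$ and $G'$ each have at most one simple positive real zero in $(0,1]$. Combined with $G(0)=h_0>0$: if $G(1)<0$, $G$ has exactly one simple zero in $(0,1)$ by IVT; if $G(1)>0$, then a zero in $(0,1)$ would force two zeros (or an even-order zero), contradicting the sign rule; if $G(1)=0$, a second zero $a\in(0,1)$ together with $G'(0)>0$ (for $\ka\geq 2$) and Rolle would force $G'$ to vanish twice, again contradicting the sign rule. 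Your instinct that the coefficient structure of $G$ is the key was right — you were one citation away from the argument that makes it all work cleanly.
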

\begin{proof}[Proof of Lemma \ref{lem:divideH}.]
As $\sum_{n\geqslant0} h_n = 1$,
\[
H(s) - s^\ka = \sum_{n\geqslant 0} h_n(s^n - s^\ka)= \sum_{n < \ka} h_n(s^n - s^\ka) - \sum_{n > \ka} h_n(s^\ka - s^n),
\]
therefore
\[
G(s) = \frac{H(s) - s^\ka}{1-s}=\sum_{n < \ka} h_n s^n\frac{1-s^{\ka-n}}{1-s} - \sum_{n > \ka} h_n s^\ka\frac{1-s^{n-\ka}}{1-s}
\]
\[
= \sum_{n < \ka} h_n \left(\sum_{k=n}^{\ka-1}s^k\right) - \sum_{n > \ka} h_n\left(\sum_{k=\ka}^{n-1}s^k\right)
\]
\[
= \sum_{k < \ka} s^k \left(\sum_{n \leqslant k}h_n\right) - \sum_{k \geqslant \ka} s^k\left(\sum_{n>k}h_n\right)
\]
\[
= \sum_{k < \ka} s^k \left(\sum_{n \leqslant k}h_n\right) - \sum_{k \geq \ka} s^k\left(1 - \sum_{n \leqslant k}h_n\right).
\]
Interchanging $k$ and $n$ yields the above claimed formulas.

As the coefficients $g_n\geqslant$0, for $n < \ka$, and $g_n \leqslant 0$, for $n \geqslant \ka$, $G(s)$ and $G'(s)$ have at most one sign change each; by Descartes rule of signs for power series \cite{Curt}, it follows that each of $G(s)$  and $G'(s)$ can have at most $1$ simple positive real zero in $(0, 1]$. As $G(0)=h_0>0$, $G(s)$ must have one simple zero in $(0, 1]$ if $G(1) < 0$. If $G(1) > 0$, then $G(s)$ does not vanish in $[0, 1]$, because in such case it must vanish twice, or have a zero of even multiplicity, which would contradict the aforementioned Descartes rule. It remains to consider the possibility that $G(1)=0$.  If $G(s)$ vanishes at some other point $a \in (0, 1)$, then $G'(s)$ must change its sign between $0$ and $a$: indeed, as $G(0)=h_0>0$, $G'(0)=h_0+h_1 >0$ (for $\ka \geqslant 2$), $G'(s)$ must become negative in $(0, a)$ for $G(s)$ to descend to $0$ at $s=a$. Then, by Rolle's theorem, $G'(s)$ has at least two zeros: one in the interval $(0, a)$, as it was discussed above, and another in $(a, 1)$, contradicting the sign rule applied to $G'(s)$ \cite{Curt}. The same is also true when $\ka=1$: $G'(s) < 0$ for $s \in (0, 1]$ or $G(s)$ is a constant (because $g_n\leqslant 0$, for $n \geqslant 1$, when $\kappa=1$). Therefore $G(s)$ can have only one zero in $[0, 1]$, when $\ka=1$. One evaluates $G(1)$ by
\[
G(1) = \lim_{s \to 1^-} \frac{H(s)-s^\ka}{1-s} = (s^\ka - H(s))'\vert_{s=1} = \ka - H'(1).
\]
This proves all the properties of $G(s)$ claimed in Lemma \ref{lem:ord1}.
\end{proof}

\begin{lemma}\label{lem:rhoH} For any complex zero $\zeta \in \D$  of $H(s)-s^\ka$, it's absolute value $\abs{\zeta}$ belongs to the interval $(0, a]$; here $a \in (0, 1]$ denotes the smallest positive zero of $H(s)$. Moreover, $\abs{\zeta}=a$ is possible only when $\zeta=\abs{a}e^{2\pi {\bf i}j/d}$, ${\bf i}:=\sqrt{-1}$, for $0 \leqslant j \leqslant d-1$, where the integer $d\mid\ka$ and $d\mid n$ for each $n$ such that $h_n \ne 0$.
\end{lemma}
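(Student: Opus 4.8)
The plan is to translate the claim into a statement about the single real number $r:=\abs{\zeta}$ and then read off everything from the sign behaviour of $H(s)-s^\ka$ on $(0,1)$ that Lemma~\ref{lem:divideH} already supplies. (Throughout, I read $a$ as the least positive real zero of $H(s)-s^\ka$; it lies in $(0,1]$ because $H(0)-0^\ka=h_0>0$, so the origin is not a zero, and $H(1)-1^\ka=0$, so $s=1$ always is.) In particular $\zeta\neq0$, hence $r\in(0,1)$. Since all $h_n\ge0$, the triangle inequality gives
\[
r^\ka=\abs{\zeta^\ka}=\Big|\sum_{n\ge 0}h_n\zeta^n\Big|\le\sum_{n\ge 0}h_nr^n=H(r),
\]
that is, $H(r)-r^\ka\ge0$.

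To locate $r$, I would invoke Lemma~\ref{lem:divideH}: writing $H(s)-s^\ka=(1-s)G(s)$ with $G(0)=h_0>0$, either $G>0$ on all of $[0,1)$ (the case $H'(1)\le\ka$, where $a=1$), or $G$ has a single simple zero $s_0\in(0,1)$ with $G>0$ on $[0,s_0)$ and $G<0$ on $(s_0,1)$ (the case $H'(1)>\ka$, where $a=s_0<1$). In either case $H(s)-s^\ka>0$ throughout $(0,a)$, so $H(r)-r^\ka\ge0$ together with $r\in(0,1)$ forces $r\le a$; hence $\abs{\zeta}\in(0,a]$.

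Finally, to pin down the equality case $\abs{\zeta}=a$: since $r=\abs{\zeta}<1$ this can occur only when $a<1$, so $G(a)=0$ and $H(a)=a^\ka$. Then the displayed estimate collapses to an equality, $\big|\sum_n h_n\zeta^n\big|=\sum_n h_nr^n$, and — exactly as in the proof of Lemma~\ref{lem:value1H} — this requires all non-zero terms $h_n\zeta^n$ to share one common argument; matching the $n=0$ term $h_0>0$, that argument is $0$, so $h_n\zeta^n>0$, hence $\zeta^n>0$, whenever $h_n\ne0$. Writing $\zeta=a\,e^{\mathbf{i}\theta}$, this says $e^{\mathbf{i}n\theta}=1$ for each such $n$ (if $Z\equiv0$ then $H(s)-s^\ka=1-s^\ka$ has no zero in $\D$ and the lemma is vacuous, so I may assume $Z$ non-degenerate, and then some $n\ge1$ with $h_n\ne0$ exists); therefore $\theta/(2\pi)$ is rational, say $j/d$ in lowest terms with $0\le j\le d-1$, and then $d\mid n$ whenever $h_n\ne0$ and $\zeta=a\,e^{2\pi\mathbf{i}j/d}$. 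Lastly, $\zeta^\ka=H(\zeta)=\sum_n h_na^n=H(a)>0$, so $e^{\mathbf{i}\ka\theta}=1$ and $d\mid\ka$, which is exactly the description in the statement.

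Given Lemma~\ref{lem:divideH}, the argument is essentially routine. The hard part, such as it is, will be the equality analysis of the triangle inequality — extracting the rational angle $\theta$ and the divisibilities $d\mid n$ and $d\mid\ka$ — and not overlooking the degenerate possibilities $\E{Z}\le\ka$ (which forces $a=1$, so that $\abs{\zeta}=a$ simply cannot occur for $\zeta\in\D$) and $Z\equiv0$. Neither constitutes a genuine obstacle.
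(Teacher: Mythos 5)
Your proof is correct and follows essentially the same route as the paper: both take absolute values in $H(\zeta)=\zeta^\ka$ to get $H(\abs{\zeta})\geqslant\abs{\zeta}^\ka$, invoke Lemma~\ref{lem:divideH} for the sign of $H(s)-s^\ka$ on $(0,1)$ to deduce $\abs{\zeta}\leqslant a$, and settle the boundary case by analysing when the triangle inequality is tight, exactly as in Lemma~\ref{lem:value1H}. (You also correctly read $a$ as the smallest positive zero of $H(s)-s^\ka$ rather than of $H(s)$, which appears to be a misprint in the statement, and you usefully flagged the degenerate case $Z\equiv 0$.)
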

\begin{proof}
By the previous Lemma \ref{lem:divideH}, $H(s)-s^\ka$ has unique positive simple real zero $a \in (0, 1]$, such that $H(s) > s^\ka$ for $s \in [0, a)$, and $H(s) < s^\ka$ for $s \in (a, 1)$ (if $a=1$, then the later interval is empty). After taking absolute values on both sides of $H(\ze)=\ze^\ka$, one obtains
\begin{equation}\label{eq:moduli2}
\sum_{n \geq 0}h_n \abs{\zeta^n} \geqslant \abs{\zeta}^\ka,
\end{equation}
which is equivalent to $H(\abs{\ze}) - \abs{\ze}^\ka \geqslant 0$. Thus, $\abs{\ze} \in (0, a]$. Furthermore, $\abs{\ze}=a$ is possible for $H(\ze)=\ze^\ka$ only when equality is attained in the triangle inequality in \eqref{eq:moduli2}. Reasoning the same way as in Lemma \ref{lem:value1H}, all non-zero terms $h_n\ze^n$ and $\ze^\ka$ must be real and positive, and there exists such smallest $d \in \N$, which $d 
\mid \ka$, $d \mid n$ whenever $h_n \ne 0$ and $\ze^d = a^d$. The statement follows.
\end{proof}
\begin{corollary}\label{cor:1val1H}
Let $a \in (0, 1]$ denote the smallest positive zero of $H(s)-s^\ka$. If $\ka$ is even and $h_n \ne 0$ for at least one odd $n \in \N$, then $\abs{H(-1)} < 1$ and $H(s)-s^\ka$ has odd number of negative real zeros in $(-1, 0)$, each of them of odd order and located in $(-a, 0)$.
\end{corollary}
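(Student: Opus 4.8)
The plan is to study $f(s):=H(s)-s^{\ka}$ as a real function on the interval $[-1,0]$, count its sign changes there, and combine this with the localisation of zeros in Lemma~\ref{lem:rhoH} and the zero bound in Lemma~\ref{lem:zerosH}. First, $\abs{H(-1)}<1$: the triangle inequality gives $\abs{H(-1)}=\bigl|\sum_{n\geqslant 0}(-1)^nh_n\bigr|\leqslant\sum_{n\geqslant 0}h_n=1$, and equality would force all $h_n$ with $n$ odd to vanish (excluded because $h_n\ne0$ for some odd $n$) or all $h_n$ with $n$ even to vanish (excluded because $h_0>0$). The same hypothesis forces $\mathbb{P}(Z=\ka)<1$, since $\ka$ is even, so Lemmas~\ref{lem:ord1} and~\ref{lem:divideH} are available; in particular $a\in(0,1]$ is the unique real zero of $f$ in $(0,1]$, $f>0$ on $[0,a)$, and when $a<1$ that zero is simple.

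Since $\ka$ is even, $f(-1)=H(-1)-1<0$ while $f(0)=h_0>0$. The series $f$ is analytic in $\D$, continuous on $\overline{\D}$, and nonzero at both $-1$ and $0$, so it has only finitely many zeros in $(-1,0)$, each of finite order; as $f$ changes sign across $[-1,0]$, the number of its \emph{odd-order} zeros in $(-1,0)$ is odd, hence at least one. By Lemma~\ref{lem:rhoH} every zero $\zeta\in\D$ of $f$ satisfies $\abs{\zeta}\leqslant a$, and $\abs{\zeta}=a$ only if $\zeta=a\,e^{2\pi\mathbf{i}j/d}$ for an integer $d$ dividing every $n$ with $h_n\ne0$. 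As one such $n$ is odd, $d$ is odd and $-a$ is not of this form; together with $f(0)\ne0$ this shows that every real zero of $f$ in $(-1,0)$ actually lies in $(-a,0)$.

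It remains to show that each such zero has odd order, from which the ``odd number'' assertion follows by the preceding paragraph. Let $N$ be the total multiplicity of the real zeros of $f$ in $(-a,0)$, let $\epsilon\in\{0,1\}$ be the multiplicity of the positive real zero lying in $\D$ (so $\epsilon=1$ precisely when $a<1$), and note that non-real zeros of the real power series $f$ come in conjugate pairs, so Lemma~\ref{lem:zerosH} gives $\epsilon+N+2P\leqslant\ka$ for some integer $P\geqslant0$. For $\ka=2$ this, together with $N\geqslant1$, forces $P=0$ and $N\in\{1,2\}$; but $N=2$ would make the negative real zeros either one double zero or two simple zeros, i.e.\ an \emph{even} number of odd-order zeros, contradicting the oddness just established. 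Hence $N=1$, so there is exactly one negative real zero, it is simple (of odd order), and it lies in $(-a,0)$; this proves the corollary for $\ka=2$.

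Everything except the last step carries over verbatim to any even $\ka$; the main obstacle is excluding even-order zeros in $(-a,0)$ when $\ka\geqslant4$, where the inequality $\epsilon+N+2P\leqslant\ka$ alone permits, for instance, $N=3$ realised as a simple plus a double negative zero --- consistent with an odd count of odd-order zeros yet violating ``each of odd order''. I expect the general case to follow the same strategy but to require a finer accounting of how the at most $\ka$ zeros of $f$ in $\D$ split among the positive real axis, the negative real axis, and the conjugate pairs, combined with a sign-tracking argument on $[-1,0]$ like the ones used above.
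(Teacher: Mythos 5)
Your argument follows the paper's own line of reasoning for the parts the paper actually proves: $\abs{H(-1)}<1$ via the triangle inequality together with the parity hypothesis, the sign change of $H(s)-s^\ka$ on $[-1,0]$ giving an odd number of sign-change points (i.e., odd-order zeros), and Lemma \ref{lem:rhoH} to push every such zero into $(-a,0)$ (the key observation being that the integer $d$ must be odd, so $\ze=-a$ is excluded). Where you go further than the paper is on the clause ``each of them of odd order'': the paper's proof never addresses it --- it only counts sign-change points --- and, as you correctly observe, its argument alone would not exclude an even-order negative real zero. Your additional counting step via Lemma \ref{lem:zerosH} ($\epsilon+N+2P\leq\ka$, forcing $N=1$ when $\ka=2$) closes that gap cleanly for $\ka=2$, which is the only case the paper subsequently uses (in Corollary \ref{cor:m2zerosH}, whose proof already combines the corollary with Lemma \ref{lem:zerosH} in essentially the way you do here). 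Your candid acknowledgment that the ``each of odd order'' claim is not established by this method for even $\ka\geq4$ is warranted: the paper's own proof has exactly the same lacuna, since it only tracks sign changes; for general even $\ka$ the statement as literally written would need a finer argument (or is a mild overclaim, given that downstream only the $\ka=2$ case and only the odd-count-plus-localization facts are invoked). In short: correct and complete for $\ka=2$ by essentially the paper's route augmented with the necessary zero-count bound, and honestly flagged as open beyond that --- which, as it happens, is also where the paper's proof stops.
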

\begin{proof}[Proof of Corollary \ref{cor:1val1H}.]
As $0<h_0<1$, one readily verifies that $H(-1) = \pm 1$ is impossible, if $h_n \ne 0$ for at least one odd $n$ (see also Lemma \ref{lem:value1H}). Hence, $\abs{H(-1)} < 1$. It follows that $H(s)-s^\ka< 0$ at the point $s=-1$ and $H(s)-s^\ka=h_0 > 0$ at $s=0$, so the function $H(s)-s^\ka$ must have an odd number of sign change points in $(-1, 0)$. Furthermore, by Lemma \ref{lem:rhoH}, $\abs{s} \leq a$. As $s=-a$ is possible only for even integers $d$ in Lemma \ref{lem:value1H} (for $a=1$) or Lemma \ref{lem:rhoH} (for $a<1$), we must have $\abs{s} < a$ for each such sign change point $s \in (-a, 0)$.
\end{proof}

\begin{corollary}\label{cor:m2zerosH}
Assume that $H(s)-s^2$ is primitive. Then $H(s)-s^2$ has at most $2$ simple, distinct zeros inside $\D$ and one zero on the boundary $\pD$ of multiplicity at most $2$. More precisely, $H(s)-s^2$ has
\begin{enumerate}[a)]
\item a simple negative zero at $s=-\al^{-1} \in (-1, 0)$.
\item a simple positive zero at $s=\be^{-1} \in (\al^{-1}, 1)$, when $H'(1) > 2$.
\item a zero at $s=1$. If $H'(s) \ne 2$, then this zero is simple. If $H'(1)=2$, $H''(1) \not\in \{2, +\infty\}$, then $s=1$ is a double zero.
\end{enumerate}
\end{corollary}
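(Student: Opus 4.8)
\textbf{Proof proposal for Corollary \ref{cor:m2zerosH}.}

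The plan is to combine the counting Lemma \ref{lem:zerosH} (at most $\ka=2$ zeros in $\D$) with the location and multiplicity information in Lemmas \ref{lem:ord1}, \ref{lem:divideH}, \ref{lem:rhoH} and Corollary \ref{cor:1val1H}, specialized to $\ka=2$ under the primitivity hypothesis, which by Lemma \ref{lem:value1H} forces $s=1$ to be the only point of $\pD$ where $H(s)=s^2$. First I would dispose of the boundary: since $H(s)-s^2$ is primitive, $\mathbb{P}(Z\in 2\N_0+1)\ne 0$, so $\mathbb{P}(Z=2)<1$ and Lemma \ref{lem:ord1} gives $r\le 2$ for the order of vanishing at $s=1$; combined with the primitive case of Lemma \ref{lem:value1H}, this shows $s=1$ is the unique zero on $\pD$, of multiplicity $1$ or $2$. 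The factorization $H(s)-s^2=(1-s)G(s)$ of Lemma \ref{lem:divideH} then pins down when the zero at $1$ is double: $(1-s)$ already contributes one factor, and $G(1)=2-H'(1)$, so $G(1)=0$ exactly when $H'(1)=2$; in that case the remaining multiplicity statement ($s=1$ double iff additionally $H''(1)\notin\{2,+\infty\}$) follows from the $k=2$ line of \eqref{eq:lim1} in the proof of Lemma \ref{lem:ord1}, namely the limit there is $(H''(1)-2)/2$.

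Next I would count and locate the interior zeros. By Lemma \ref{lem:divideH} (with $\ka=2$), $G(s)$ has exactly one sign-change pattern: it has a unique simple real zero in $(0,1)$ when $H'(1)>2$, no zero in $[0,1]$ when $H'(1)<2$, and its only zero in $[0,1]$ is at $s=1$ when $H'(1)=2$. This real zero in $(0,1)$, when it exists, is the smallest positive zero $a$ of $H(s)-s^2$ from Lemma \ref{lem:rhoH}; I would name it $\be^{-1}\in(0,1)$, establishing part b). Independently, Corollary \ref{cor:1val1H} applies because $\ka=2$ is even and primitivity supplies an odd $n$ with $h_n\ne 0$: it yields $\abs{H(-1)}<1$ and an odd number (hence at least one) of negative real zeros in $(-a,0)$, each of odd order; I would call the outermost such zero $-\al^{-1}\in(-1,0)$, which gives part a) — and I still must argue it is simple and the unique interior negative zero.

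The main obstacle is the exact bookkeeping of multiplicities so that the total does not exceed the budget of $2$ interior zeros from Lemma \ref{lem:zerosH}. The argument I would run is: the zero $-\al^{-1}$ in $(-1,0)$ is genuinely inside $\D$ (it is not on $\pD$ since $\abs{H(-1)}<1$, and it has $\abs{\,\cdot\,}<a$ by Corollary \ref{cor:1val1H}), so it consumes at least one of the two interior slots; if $H'(1)>2$, the zero $\be^{-1}\in(0,1)$ consumes the other, forcing both to be simple and to be the only interior zeros — in particular no complex zeros and no higher-order negative zero survive. If $H'(1)\le 2$, then $G(s)>0$ on $[0,1]$ (or only vanishes at $1$), so $H(s)-s^2$ has no positive zero in $\D$, leaving up to two slots for negative/complex zeros; here I would invoke Lemma \ref{lem:rhoH} once more, noting that any complex zero $\ze$ has $\abs{\ze}\le a$, and use the sign-change/Descartes-type reasoning already embedded in Lemma \ref{lem:divideH} and Corollary \ref{cor:1val1H} applied to the auxiliary function governing negative real $s$, to conclude there is exactly one negative real zero, necessarily simple, and no non-real zeros. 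I expect the delicate point to be ruling out a conjugate pair of non-real zeros of modulus $<a$ when $H'(1)\le 2$ using only the stated lemmas; if the lemmas as written do not quite deliver this, the fallback is a direct Rouché comparison of $H(s)$ against $\la s^2$ on the circle $\abs{s}=a-\eps$ combined with the fact that $H(s)-s^2$ is real and positive at $s=0$ and negative at $s=-a+\delta$, which forces the two interior zeros to be a single simple negative real zero plus the one already accounted for, leaving no room for a complex pair.
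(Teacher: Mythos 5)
Your route is essentially the paper's own: count with Lemma~\ref{lem:zerosH}, locate with Lemmas~\ref{lem:divideH}, \ref{lem:rhoH}, Corollary~\ref{cor:1val1H}, pin the boundary with Lemmas~\ref{lem:value1H}, \ref{lem:ord1} and the $k=2$ limit in \eqref{eq:lim1}. In several places you are actually more careful than the paper's terse proof (you spell out that primitivity forces $\mathbb{P}(Z=2)<1$ so Lemma~\ref{lem:ord1} applies, and you invoke Lemma~\ref{lem:value1H} explicitly for uniqueness of the boundary zero).

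The point you flag as ``delicate'' --- ruling out a conjugate pair of non-real zeros when $H'(1)\le 2$ --- is in fact not delicate: the budget argument you already use in the $H'(1)>2$ case closes it immediately, and you should run it uniformly rather than splitting on the sign of $H'(1)-2$. Corollary~\ref{cor:1val1H} gives an odd number ($\ge 1$) of negative real zeros, each of odd order ($\ge 1$); against Lemma~\ref{lem:zerosH}'s total budget of $2$ inside $\D$, the only admissible configuration is exactly one negative zero of order exactly $1$. That leaves a single remaining slot, which cannot hold a conjugate pair since non-real zeros of a real power series come in pairs and would consume two slots. Thus any further interior zero is simple, real, and (being neither $0$ nor negative) positive --- present precisely when Lemma~\ref{lem:divideH} produces one, i.e.\ when $H'(1)>2$, with $\be<\al$ via Lemma~\ref{lem:rhoH}. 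The Rouch\'e fallback you sketch is therefore unnecessary; moreover, as stated it would not obviously work, since on a circle $\abs{s}=a-\eps$ neither $\abs{H(s)}$ nor $\abs{\la s^2}$ dominates the other in general, unlike on $\pD$ where Lemma~\ref{lem:zerosH} already applied Rouch\'e cleanly. Drop the fallback, apply the slot-counting to the $H'(1)\le 2$ case as well, and the proof is complete and matches the paper's.
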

\begin{proof}[Proof of Corollary \ref{cor:m2zerosH}.]
By Lemma \ref{lem:zerosH}, $H(s)-s^2$ can have at most $2$ distinct simple zeros inside $\D$ or $1$ zero of order $2$ in $\D$. By Corollary \ref{cor:1val1H}, there is precisely one real negative zero at $s=-\al^{-1}$ of order $1$. Hence, another possible zero of $H(s)-s^2$ must be also of order $=1$, so it must be real positive, because complex zeros of $H(s)-s^2$ with real coefficients should occur in conjugate pairs (see Lemma \ref{lem:divideH}). If such zero exists, then denote it by $s=\be^{-1} \in (0, 1)$. One must have $\be<\al$ by Lemma \ref{lem:rhoH}. It is obvious that $H(s)-s^2$ vanishes at $s=1$. By Lemma \ref{lem:ord1}, it must be of multiplicity $\leq 2$, and derivative calculations in \eqref{eq:lim1} result in conditions for $H'(1)$ and $H''(1)$.
\end{proof}

\section{Asymptotic expansion}\label{sec:asymp_exp}
In this section we decompose $X(s)$ in (\ref{eq:X}) into a simple fractions and prove the main results of the article.

To deal with zeros on the boundary of the disk of convergence of $H(s)$ when decomposing $X(s)$, we need a lemma on the local behavior near the point of singularity. 

\begin{lemma}\label{lem:diffdiv}
Let $k, n \in \N_0$, $\DD \subset \mathbb{C}$ be non-empty convex open set and $\overline{\DD}$ be the closure of $\DD$. Suppose that the function $f: \overline{\DD} \to \mathbb{C}$ is at least $n+k+1$ times continuously complex-differentiable inside the intersection of $\overline{\DD}$ an the open convex neighbourhood $\mathcal{U}$ of a point $\ze \in \partial{\DD}$; here, all the derivatives are taken in such a way, that the variable $s$ approaches $\ze$ while staying in $\overline{\DD}$. If
    \[
        f(\ze)=f'(\ze)=\dots=f^{(n)}(\ze)=0,
    \]
    then $\ze$ is a removable singularity for $q(s):=f(s)/(s-\ze)^{n+1}$ and its derivatives $q^{(j)}(s)$, $0 \leq j \leq k$. Hence, $q^{(j)}(s)$ may be deemed to be continuous at $\mathcal{U} \cap \overline{\DD}$.
\end{lemma}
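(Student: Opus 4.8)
The plan is to reduce the statement to a Taylor-expansion argument near $\ze$, carried out entirely within $\overline{\DD}$. Since $f$ is assumed $(n+k+1)$-times continuously differentiable on $\mathcal{U}\cap\overline{\DD}$ with the one-sided derivatives taken as $s\to\ze$ inside $\overline{\DD}$, and since $f(\ze)=f'(\ze)=\dots=f^{(n)}(\ze)=0$, Taylor's theorem with integral (or Peano) remainder gives, for $s\in\mathcal{U}\cap\overline{\DD}$,
\[
f(s)=\frac{1}{n!}\int_0^1 (1-t)^n f^{(n+1)}\bigl(\ze+t(s-\ze)\bigr)\,dt\;(s-\ze)^{n+1},
\]
where the segment from $\ze$ to $s$ stays in the convex set $\overline{\DD}\cap\mathcal{U}$. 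Hence $q(s)=f(s)/(s-\ze)^{n+1}$ equals the integral factor above, which I will call $g(s)$; the point of writing it this way is that $g$ is manifestly defined and continuous at $s=\ze$ as well, so $\ze$ is a removable singularity of $q$.

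Next I would upgrade this from continuity to $C^k$-smoothness of $q$. The cleanest route is to observe that $g(s)=\frac{1}{n!}\int_0^1(1-t)^n f^{(n+1)}(\ze+t(s-\ze))\,dt$ is itself $k$-times continuously differentiable in $s$ on $\mathcal{U}\cap\overline{\DD}$: one differentiates under the integral sign $j\le k$ times, which is justified because $f^{(n+1)}$ is $C^k$ by hypothesis, the segment of integration remains inside the convex neighbourhood, and the integrand and its $s$-derivatives are jointly continuous on the compact parameter set $t\in[0,1]$. This yields
\[
g^{(j)}(s)=\frac{1}{n!}\int_0^1 (1-t)^n\, t^{j}\, f^{(n+1+j)}\bigl(\ze+t(s-\ze)\bigr)\,dt,\qquad 0\le j\le k,
\]
each of which is continuous up to $s=\ze$. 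Since $q(s)=g(s)$ on the punctured neighbourhood and the common value extends continuously, $q$ and its derivatives $q^{(j)}$, $0\le j\le k$, are continuous on $\mathcal{U}\cap\overline{\DD}$, which is exactly the claim. (Alternatively, one can avoid differentiating under the integral and instead apply L'Hôpital's rule / repeated Taylor expansion directly to $q^{(j)}$, but the integral-remainder form makes the one-sided, stay-in-$\overline{\DD}$ nature of the derivatives transparent.)

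The main obstacle, and the only place that needs care, is the bookkeeping of \emph{one-sided} complex differentiation: the derivatives of $f$ at $\ze$ do not come from holomorphy on a full neighbourhood of $\ze$ but only from approach within $\overline{\DD}$, so one must check that Taylor's theorem and differentiation under the integral sign still apply in this restricted setting. Convexity of $\DD$ is what saves the day — it guarantees that for every $s\in\mathcal{U}\cap\overline{\DD}$ the whole segment $\{\ze+t(s-\ze):t\in[0,1]\}$ lies in $\overline{\DD}\cap\mathcal{U}$, so the remainder integral only ever evaluates $f^{(n+1)},\dots,f^{(n+1+k)}$ at points where they are assumed to exist and be continuous. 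Once this is noted, everything else is the routine Taylor/Leibniz computation sketched above, and the requirement that $f$ be $n+k+1$ times (rather than merely $n+1$ times) continuously differentiable is precisely what is consumed in the $k$-fold differentiation of $g$.
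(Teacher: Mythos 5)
Your proof is correct and follows essentially the same route as the paper: parametrize the segment from $\ze$ to $s$ (allowed by convexity), invoke Taylor's theorem with the integral-form remainder so that the vanishing of $f,\dots,f^{(n)}$ at $\ze$ kills the polynomial part and exhibits $q(s)$ as $\frac{1}{n!}\int_0^1(1-t)^n f^{(n+1)}(\ze+t(s-\ze))\,dt$, then differentiate $k$ times under the integral sign. The only cosmetic difference is that the paper first reduces to a real-variable function $g(\la)=f(\la(s-\ze)+\ze)$ before applying the integral remainder theorem, whereas you write the integral representation directly and also record the explicit factor $t^j$ in $g^{(j)}$; the underlying argument is identical.
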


\begin{proof}[Proof of Lemma \ref{lem:diffdiv}.]
    Let $s \in \mathcal{U} \cap \overline{\DD}$. For $\la \in [0, 1]$, let $s:=s(\la)=\la(s-\ze)+\ze$. Define the function $g: [0, 1] \mapsto \mathbb{C}$ by $g(\la) := f(s) = f(\la(s-\ze)+\ze)$. By the convexity, $s \in \mathcal{U} \cap \overline{\DD}$. Therefore, the complex-valued function $g(\la)$ of a real variable $\la$ is at least $n+k+1$ times real-differentiable in the interval $[0, 1]$ with one-sided derivatives at endpoints. Let $T_n(\la): = \sum_{j=0}^n g^{(j)}(0)\la^j/j!$ be the Taylor polynomial of $g(\la)$ at $\la=0$ of degree $n$, and let $R_n(\la)=g(\la)-s_n(\la)$ be the remainder term. As the Integral Remainder Theorem is applicable to such function as $g(\la)$ (see \cite[Section 12.5.4 in p. 94]{Shilov})
    \[
        R_n(\la) = 1/n!\int_{0}^{\la}g^{(n+1)}(\tau)(\la - \tau)^n\,d\tau,
    \] where $\tau \in [0, \la]$. On the other hand, the Chain Rule differentiation yields
    \[
        g^{(j)}(\tau) = f^{(j)}(s(\tau))(s-\ze)^j, \quad 0 \leq j\leq n+k. 
    \]
    Since the first $n$ derivatives of $f(s)$ vanish at $s=\ze$, we have $g^{(j)}(0)=0$, for $0 \leq j \leq n$. Therefore, $T_n(\la)=0$, $R_n(\la)=g(\la)$ and
    \[
         g(\la) = 1/n!\int_{0}^{\la}f^{(n+1)}(s(\tau))(s-\ze)^{n+1}(\la - \tau)^n\,d\tau   
    \]
    or
    \[
        g(\la)/(s-\ze)^{n+1} = 1/n!\int_{0}^{\la}f^{(n+1)}(s(\tau))(\la - \tau)^n\,d\tau.
    \]
    Setting $\la=1$, one obtains
    \[
        f(s)/(s-\ze)^{n+1} = 1/n!\int_{0}^{1}f^{(n+1)}(\tau(s-\ze)+\ze))(1 - \tau)^n\,d\tau.
    \]
    Hence,
    \[
        \lim_{s \to \ze} f(s)/(s-\ze)^{n+1} =f^{(n+1)}(\ze)/(n+1)!,
    \]
    as long as $s \in \overline{\DD}$.  Thus, $s=\ze$ is a removable singularity. Moreover, the above integrand is $k$ times continuously differentiable in $\mathcal{U} \cap \overline{\DD}$ with respect to the parameter $s$. Therefore, by repeatedly differentiating under the integral with respect to $s$ according to the Leibniz integral rule (an adaptation of \cite[Ex.2, Ch.4]{Conway} with an endpoint on $\pD$) and then taking the limit as $s \to \ze$ with $s \in \mathcal{U} \cap \overline{\DD}$, the function $f(s)/(s-\ze)^{n+1}$ is seen to be $k$ times continuously differentiable in $\mathcal{U} \cap \overline{\DD}$.  
\end{proof}

\begin{theorem}\label{thm:decompX}
    Assume that the $H(s)-s^2$ is primitive. As $H(s)-s^2$ vanishes at $s=1$ with order $r \leq 2$, assume that $H^{(2r+l)}(1) < +\infty$ for some $l \in \N$. Then the generating function $X(s)$ from \eqref{def:X} and \eqref{eq:X} is represented in $\overline{\D}$ by
    \begin{equation}\label{eq:decompX}
        X(s) = \frac{a}{1+\al s} + \frac{b}{1-\be s} 
        + \sum_{j=1}^{r}\frac{c_j}{(1-s)^j} + f(s),
    \end{equation}
    where $-\al^{-1}$ denotes the single negative real zero of $H(s)-s^2$ in $(-1, 0)$, $\be^{-1}$ (if present) denotes the smallest positive real zero of $H(s)-s^2$ in $(0, 1)$, $\al > \be > 1$, and the $l$-th derivative $F^{(l)}(s)$ of the remainder term function $F: \overline{\D} \to \mathbb{C}$ is holomorphic in $\D$ and continuous on $\pD$. The coefficients $a$, $b$, $c_j$, $1 \leq j \leq r$ are real numbers.
\end{theorem}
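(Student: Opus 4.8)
The plan is to perform a partial fractions decomposition of the rational-like function $X(s) = H(s)/(H(s)-s^2)$, using the zero structure of the denominator established in Corollary \ref{cor:m2zerosH}. First I would invoke that corollary: under the primitivity assumption, the only zeros of $H(s)-s^2$ in $\overline{\D}$ are the simple negative zero $s=-\al^{-1}\in(-1,0)$, the possible simple positive zero $s=\be^{-1}\in(\al^{-1},1)$ present precisely when $H'(1)>2$, and the zero at $s=1$ of order $r\in\{1,2\}$ (here $\al>\be>1$ follows from $\abs{-\al^{-1}},\abs{\be^{-1}}<1$ via Lemma \ref{lem:rhoH}). Since $H(s)$ is holomorphic in $\D$ and $H^{(2r+l)}(1)<+\infty$ guarantees enough smoothness at $s=1$ by Lemma \ref{prop:Hprops}, the singularities of $X(s)$ on $\overline{\D}$ are exactly at these points, all located on $\pD$ only in the case $s=1$ (the other two are inside $\D$).

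The core computation: define the candidate coefficients by the usual residue-type limits, namely $a = \lim_{s\to -\al^{-1}} (1+\al s)X(s)$, $b = \lim_{s\to \be^{-1}} (1-\be s)X(s)$, and the $c_j$ by the Laurent expansion of $X(s)$ at $s=1$, i.e. $c_j$ is obtained from the Taylor coefficients of $(1-s)^r X(s)$ at $s=1$. Because the zeros at $-\al^{-1}$ and $\be^{-1}$ are simple and lie strictly inside $\D$ where $H$ is holomorphic, these limits exist and are finite; because $H(s)-s^2$ vanishes at $s=1$ to order exactly $r$ and $H^{(2r+l)}(1)<\infty$, the function $(1-s)^r X(s)$ extends to a function that is $C^{l}$ (indeed more) near $s=1$ on $\overline{\D}$ — this is precisely where Lemma \ref{lem:diffdiv} is used, applied to $f_0(s):= (1-s)^r(H(s)-s^2)\cdot(\text{something})$, or more directly to $H(s)-s^2$ divided by $(s-1)^r$. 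Realness of $a,b,c_j$ follows since $H$ has real coefficients and all the relevant zeros are real. Then set
\[
F(s) := X(s) - \frac{a}{1+\al s} - \frac{b}{1-\be s} - \sum_{j=1}^{r}\frac{c_j}{(1-s)^j}.
\]
By construction $F$ has removable singularities at $-\al^{-1}$, $\be^{-1}$ (these being inside $\D$) and — crucially — at $s=1$: subtracting the principal parts of the Laurent expansion cancels the pole, and Lemma \ref{lem:diffdiv} upgrades this to the statement that $F$ and its first $l$ derivatives extend continuously to $\pD$ with $F^{(l)}$ holomorphic in $\D$.

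To make the regularity claim at $s=1$ precise I would argue as follows: write $H(s)-s^2 = (1-s)^r\, G_r(s)$ where $G_r(1)\ne 0$ and, by Lemma \ref{lem:diffdiv} with $\ze=1$, $n=r-1$, and $k$ chosen so that $r+k+1 \le 2r+l$ wait — more carefully, $G_r(s) = (H(s)-s^2)/(s-1)^r$ (up to sign), and the hypothesis $H^{(2r+l)}(1)<\infty$ together with Lemma \ref{prop:Hprops} ensures $H(s)-s^2$ is $C^{2r+l}$ on $\overline{\D}$ near $s=1$, so Lemma \ref{lem:diffdiv} gives that $G_r$ is $C^{r+l}$ near $s=1$ on $\overline{\D}$ with $G_r(1)\ne 0$; hence $1/G_r(s)$ and $H(s)/G_r(s)$ share this regularity, so $(1-s)^r X(s) = \pm H(s)/G_r(s)$ is $C^{r+l}$ near $s=1$, and its degree-$(r-1)$ Taylor polynomial at $s=1$ contributes exactly the $\sum c_j (1-s)^{-j}$ terms while the Taylor remainder of order $r-1$ is $O((1-s)^r)$ and, after dividing by $(1-s)^r$, remains $C^{l}$ on $\overline{\D}$ near $s=1$ (again by Lemma \ref{lem:diffdiv}, now applied to the remainder). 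Holomorphy of $F^{(l)}$ in $\D$ is automatic since $X$ and all subtracted rational functions are holomorphic in $\D\setminus\{-\al^{-1},\be^{-1}\}$ and the two interior singularities are removable.

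The main obstacle I expect is the bookkeeping at $s=1$: tracking exactly how many derivatives survive, matching the moment hypothesis $H^{(2r+l)}(1)<+\infty$ to the claimed smoothness $F^{(l)}$ continuous on $\pD$, and correctly applying Lemma \ref{lem:diffdiv} twice (once to get $G_r$ regular and non-vanishing, once to get the Taylor remainder of $(1-s)^r X(s)$ divided by $(1-s)^r$ regular). A secondary point to handle cleanly is the dichotomy $r=1$ versus $r=2$ and the presence or absence of the $\be^{-1}$ term, but since Corollary \ref{cor:m2zerosH} enumerates all cases this is routine. One should also note that the case $H'(1)=2,\ H''(1)=2$ (giving $\var Z=0$, degenerate $Z$) is excluded by primitivity, so $r\le 2$ always holds by Lemma \ref{lem:ord1}, and when $r=2$ one has $H^{(4+l)}(1)<\infty$, matching the $\E{Z^4}<\infty$ hypothesis in Theorem \ref{thm:main_res}.
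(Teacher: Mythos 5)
Your proposal is correct and follows essentially the same route as the paper's proof: both write $H(s)-s^2 = (s-1)^r G(s)$ with $G$ regularized by Lemma \ref{lem:diffdiv} (applied with $n=r-1$, $k=r+l$), take a degree-$(r-1)$ Taylor expansion of $H(s)/G(s)$ at $s=1$ to extract the $\sum_j c_j/(1-s)^j$ principal part, apply Lemma \ref{lem:diffdiv} a second time (with $n=r-1$, $k=l$) to the Taylor remainder divided by $(s-1)^r$, and then strip off the residue contributions at the interior poles $-\al^{-1}$ and $\be^{-1}$. Your remark that $s=1$ is not an isolated singularity and the ``Laurent'' coefficients must be interpreted via Taylor coefficients of $(1-s)^rX(s)$ is exactly the subtlety the paper's proof handles via Lemma \ref{lem:diffdiv}, so the bookkeeping matches.
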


\begin{proof}[Proof of Theorem \ref{thm:decompX}.]
    Since $H(0)=h_0 \ne 0$ and $s^m \ne 0$ for $s \ne 0$,
    the functions $H(s)$ and $H(s)-s^2$ have no common zero. By Lemma \ref{prop:Hprops} and Corollary \ref{cor:m2zerosH}, $X(s)$ possesses simple poles at the zeros of $H(s)-s^2$: one always occurs at $s=-\al^{-1}$, another occurs at $\be^{-1}$ when $H'(1)>2$. Therefore, $X(s)$ is meromorphic inside $\D$. Since $H^{(2r+l)}(1)< +\infty$, the derivatives $H^{(j)}(s)$, $0 \leq j \leq 2r+l$ are continuous on $\overline{\D}$ by Lemma \ref{prop:Hprops}. As $X(s)$ is a ratio of $H(s)$ and $H(s)-s^2$, it follows that the  derivatives $X^{(j)}(s)$, $0 \leq j \leq 2r+l$ are continuous on $\overline{\D
    }\setminus\{1\}$, since $s=1$ is the single possible vanishing point of $H(s)-s^2$ on $\pD$ according to Corollary \ref{cor:m2zerosH}. In contrast to $-\al^{-1}$ and $\be^{-1}$, the point $s=1$ is not necessarily an isolated singularity of $X(s)$, as $H(s)$ and $X(s)$, in general, might not be continued holomorphically outside $\D$.
    
    To deal with this, rewrite $X(s)$ as $X(s)=H(s)/((s-1)^r G(s))$, where $G(s):=(H(s)-s^2)/(s-1)^r$. Since $s=1$ is a zero of order $r$, $f(s):=H(s)-s^2$ satisfies $f^{(j)}(1)=0$ for $0 \leq j \leq r-1$, as it was shown in \eqref{eq:lim1} of Lemma \ref{lem:ord1}. By applying Lemma \ref{lem:diffdiv} to $f(s)$ with $n=r-1$, $k=r+l$, $\DD=\D$, $\zeta=1$ one finds that, for $0 \leq j \leq r+l$, the $j$--th derivative of $G(s)=f(s)/(s-1)^r$ is continuous in $\overline{\D}$. Then the function $Q(s) := H(s)/G(s)$ has continuous derivatives $Q^{(j)}(s)$ in $\overline{\D}$ of the same order as $G(s)$ does, and it is meromorphic in $\D$.
    
    Now, consider the Taylor polynomial of order $r-1$ of the function $Q(s)$ at $s=1$ and the corresponding remainder
    \[
        T(s) := \sum_{j=0}^{r-1} \frac{Q^{(j)}(1)}{j!}(s-1)^j, \qquad
        R(s) := Q(s) - T(s).
    \]
    Then one can write
    \begin{equation}\label{eq:preDecX}
        X(s) = \frac{Q(s)}{(s-1)^r} = \frac{R(s)}{(s-1)^r} + \frac{T(s)}{(s-1)^r}.
    \end{equation}
    Setting $c_j = (-1)^j Q^{(r-j)}(1)/(r-j)!$, for $1 
    \leq j \leq r$ and $c_j = 0$ for $j >r$, $r \leq 2$ one obtains
     \begin{equation}\label{eq:defc}
        T(s)/(s-1)^r = \sum_{j=1}^2 c_j/(1-s)^j.
    \end{equation}
    As $R(s)$ is a difference of $Q(s)$ and a polynomial, the derivatives $R^{(j)}(s)$, $0 \leq j \leq r+l$ are continuous in $\overline{\D}\setminus\{-\al^{-1}, \be^{-1}\}$. Since $R^{(j)}(1)=0$, for $j=0, 1, \dots, r-1$, Lemma \ref{lem:diffdiv} can be applied to $f(s)=R(s)$, with $n=r-1$, $k=l$. It follows that the $j$-th derivative of $R(s)/(s-1)^r$, $0 \leq j \leq l$ is continuous near $s=1$ in $\overline{\D}$. Thus, each of these derivatives of $R(s)/(s-1)^r$ are continuous in the whole $\overline{\D}\setminus\{-\al^{-1}, \be^{-1}\}$. However, $R(s)/(s-1)^r$ in $\D$ still has a simple poles at $-\al^{-1}$ and at $\be^{-1}$ if $H'(1)>2$. Let
    \begin{equation}\label{eq:defF}
        f(s) := \frac{R(s)}{(s-1)^r} - \frac{a}{1+\al s} - \frac{b}{1-\be s},
    \end{equation}
    where $a/\al$ and $b/\be$ are equal to the residues of $R(s)/(s-1)^r$ at $s=-\al^{-1}$ and $s=\be^{-1}$ respectively. Then, $f(s)$ and its derivatives up to the order $l$ are holomorphic inside $\D$ and continuous in $\overline{\D}$ \cite[Theorem 10.21]{Rudin}. Putting together \eqref{eq:preDecX}, \eqref{eq:defc} and \eqref{eq:defF}, we obtain the decomposition of $X(s)$ \eqref{eq:decompX} in $\overline{\D}$ with all the claimed properties. 
 \end{proof}

\begin{corollary}\label{cor:cfs}
The coefficients $a$, $b$, $c_1$, $c_2$ in Theorem \ref{thm:decompX} have the following expressions:
    \[
        a = \frac{1}{2 +\al H'(-\al^{-1})},
        \qquad b = \begin{cases}
                        0, &\text{ if } H'(1) \leq 2,\\
                        \frac{1}{2-\be H'(\be^{-1})}, &\text{ if } H'(1) > 2.
                    \end{cases}
    \]
    \smallskip
    \noindent If $r=1$, then $H'(1) \ne 2$, and $c_1 = 1/(2-H'(1))$.\\
    \smallskip
    \noindent If $r=2$, then $ H'(1) = 2 \ne H''(1)$, and
    \[
        c_1=\frac{2H^{'''}(1)-12H^{''}(1)+24}{3(H^{''}(1)-2)^2},\quad c_2=\frac{2}{H''(1)-2}.
    \]
\end{corollary}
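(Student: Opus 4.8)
The plan is to extract each coefficient $a$, $b$, $c_1$, $c_2$ by the usual residue/limit formulas applied to the decomposition \eqref{eq:decompX} of Theorem \ref{thm:decompX}, using the Taylor data of $H(s)$ at the corresponding singular point. For the simple poles at $s=-\al^{-1}$ and $s=\be^{-1}$, I would start from $X(s)=H(s)/(H(s)-s^2)$ and compute the residue directly: since these are simple zeros of the denominator, $\operatorname{res}_{s=-\al^{-1}}X(s) = H(-\al^{-1})/(H'(-\al^{-1})-2(-\al^{-1}))$, and on the other hand the term $a/(1+\al s)$ contributes residue $a/\al$ at $s=-\al^{-1}$. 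Equating and using the defining relation $H(-\al^{-1})=(-\al^{-1})^2=\al^{-2}$ (so that $H(-\al^{-1})$ can be replaced by $\al^{-2}$) should collapse the expression to $a = 1/(2+\al H'(-\al^{-1}))$; the analogous computation at $\be^{-1}$, together with Corollary \ref{cor:m2zerosH}(b) which says this pole is present exactly when $H'(1)>2$, gives the stated formula for $b$ (and $b=0$ otherwise).

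For $c_1$ and $c_2$ I would use that, by the construction in the proof of Theorem \ref{thm:decompX}, $c_j = (-1)^j Q^{(r-j)}(1)/(r-j)!$ where $Q(s)=H(s)/G(s)$ and $G(s)=(H(s)-s^2)/(s-1)^r$. So the task reduces to evaluating $G(1)$, $G'(1)$ (and, for $r=2$, also $H'(1)$, $Q(1)$) in terms of the derivatives of $H$ at $1$. When $r=1$, Lemma \ref{lem:ord1} and Corollary \ref{cor:m2zerosH}(c) force $H'(1)\ne 2$, and $G(1)=\lim_{s\to 1}(H(s)-s^2)/(s-1)$; by the derivative-limit formula \eqref{eq:lim1} with $k=1$ this equals $H'(1)-2$, whence $c_1 = Q(1) = H(1)/G(1) = 1/(H'(1)-2) = 1/(2-H'(1))$ after the sign convention $c_1/(1-s)$ versus $Q(1)/(s-1)$ is accounted for. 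When $r=2$, one has $H'(1)=2$ and $H''(1)\ne 2$ by Corollary \ref{cor:m2zerosH}(c); here I would Taylor-expand $H(s)-s^2$ at $s=1$ to third order, write $H(s)-s^2 = \tfrac{H''(1)-2}{2}(s-1)^2 + \tfrac{H'''(1)}{6}(s-1)^3 + o((s-1)^3)$, divide by $(s-1)^2$ to read off $G(1)=\tfrac{H''(1)-2}{2}$ and $G'(1)=\tfrac{H'''(1)}{6}$, and then compute $c_2 = Q(1) = H(1)/G(1) = 2/(H''(1)-2)$ and $c_1 = -Q'(1) = -\bigl(H'(1)G(1)-H(1)G'(1)\bigr)/G(1)^2$, substituting $H(1)=1$, $H'(1)=2$ and simplifying to the claimed $\tfrac{2H'''(1)-12H''(1)+24}{3(H''(1)-2)^2}$.

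The computations are all elementary once the bookkeeping is set up, so the main obstacle is purely one of care rather than depth: getting the signs right in passing between the $1/(1-s)^j$ normalization used in \eqref{eq:decompX} and the $(s-1)^{r-j}$ Taylor coefficients of $Q$ defining $c_j$, and correctly substituting the constraints ($H(1)=1$; $H'(1)=2$ in the $r=2$ branch; $H(-\al^{-1})=\al^{-2}$, $H(\be^{-1})=\be^{-2}$ at the other poles) so that $H$-values disappear and only the derivative expressions remain. I would also double-check that the hypothesis of Theorem \ref{thm:decompX} — finiteness of $H^{(2r+l)}(1)$ — guarantees that $H'''(1)$ is finite in the $r=2$ case (it is, since $2r+l\ge 5>3$), so that the third-order Taylor expansion is legitimate.
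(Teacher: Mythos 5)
Your proof is correct and follows essentially the same approach as the paper's: residue/limit extraction at the simple poles $-\al^{-1}$ and $\be^{-1}$ using $H(-\al^{-1})=\al^{-2}$, $H(\be^{-1})=\be^{-2}$, and the coefficient formula $c_j=(-1)^jQ^{(r-j)}(1)/(r-j)!$ with $Q(s)=H(s)/G(s)$ evaluated via the Taylor expansion of $H(s)-s^2$ at $s=1$ (the paper reaches $G(1)$, $G'(1)$ through Lemma~\ref{lem:diffdiv}/Cauchy mean value rather than a written-out Taylor expansion, but the computation is the same). The only slip is notational --- $c_1=-Q(1)$, not $Q(1)$, when $r=1$ --- which you flag yourself, and your explicit $r=2$, $c_1$ calculation usefully fills in a detail the paper omits.
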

    
\begin{proof}[Proof of Corollary \ref{cor:cfs}.]
    Since $-\al^{-1}$ is a solution of $H(s)=s^2$, one has $H(-\al^{-1})=\al^{-2}$. From the decomposition \eqref{eq:decompX} of $X(s)$ obtained in Theorem \ref{thm:decompX}, one has
    \[
        a = \lim_{s \to\,-\al^{-1}}(1+\al s)X(s) = \lim_{s \to\, -\al^{-1}}\frac{\al H(s)}{(H(s)-s^2)/(s+\al^{-1})}=
    \]
    \[
 \frac{\al H(-\al^{-1})}{(H(s)-s^2)'/(s+\al^{-1})' \vert_{s=\al^{-1}}}
    = \frac{\al^{-1}}{H'(-\al^{-1})+2\al^{-1}}=\frac{1}{2+\al H(-\al^{-1})}.
    \]
    Replacing $-\al^{-1}$ with $\be^{-1}$ in the above calculation, one finds $b$. For $r=1$, using $1$ in place of $-\al^{-1}$, one obtains $c_1$. For $r=2$, the evaluation of $c_1$ and $c_2$ is slightly more complicated. One has
    \[
        c_{2-j} = \lim_{s \to 1^-} \frac{(-1)^{2-j}}{j!} \big((s-1)^2X(s)\big)^{(j)}, \qquad \text{ for } j = 0, 1.
    \]
    The last limit is evaluated as follows. We write $(s-1)^2 X(s)=H(s)/G_2(s)$, where $G_2(s):=(H(s)-s^2)/(s-1)^2$. By Lemma \ref{lem:diffdiv}, $G_2(s)$ is at least twice continuously differentiable in $\overline{\D}$ according to the assumptions of Theorem \ref{thm:decompX}. This means the above limit evaluation can be replaced by
    \begin{equation}\label{eq:cform}
        c_{2-j} = \frac{(-1)^{2-j}}{j!} \left(\frac{H(s)}{G_2(s)}\right)^{(j)}\bigg\vert_{s=1}
    \end{equation}
    First, we evaluate $H(s)$ and $H'(s)$, using the appropriate assumptions of Theorem \ref{thm:decompX}. Then one finds $G_2(1)$, $G_2'(1)$ by quotient rule, using higher derivatives and Cauchy Middle Value theorem on the real line (or, alternatively, Lemma \ref{lem:diffdiv}) as $s \to 1^-$ to resolve $0/0$ ambiguities. Finally, one differentiates $j$--times the quotient $H(s)/G_2(s)$ and substitutes the previously found values of $H^{(j)}(1)$, $G_2^{(j)}(1)$, in order to evaluate $c_{2-j}$. For instance, $j=0$ in Eq. \eqref{eq:cform} yields
    \[
        c_2 = \frac{(-1)^{2-0}}{0!} \left(\frac{H(s)}{G_2(s)}\right)^{(0)}\bigg\vert_{s=1} = \frac{H(1)}{G_2(1)}=\left(\frac{(H(s)-s^2)^{(2)}}{((s-1)^2)^{(2)}}\Big\vert_{s=1}\right)^{-1}
    \]
    \[
        = \frac{2}{H''(1)-2}.
    \]
    The evaluation of $c_1$ for $r=2$ is similar, but more elaborate, so the technical details are omitted. 
\end{proof}

\begin{theorem}\label{thm:asymtx} Assume that $H(s)-s^2$ is primitive, has vanishing order $r \leq 2$ at $s=1$ and $H^{(2r+l)}(1) < +\infty$ for some $l \in \N_0$. Then, the Taylor coefficients of $X(s)$ have asymptotic expansion
\begin{equation}\label{eq:xexp}
x_n = a(-1)^n \al^n + b\be^n + p_{r-1}(n) + f_n,
\end{equation}
where $a$, $b$, $\al$, $\be$, $f_n$ are as in Theorem \ref{thm:decompX} and Corollary \ref{cor:cfs}, $p_{r-1}(s) \in \mathbb{R}[s]$ is a polynomial of degree $r-1$, and $f_n = o\left(n^{-l}\right)$, as $n \to \infty$.
\end{theorem}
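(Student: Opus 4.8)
The plan is to read off the Taylor coefficients of $X(s)$ term by term from the simple-fraction decomposition \eqref{eq:decompX} provided by Theorem \ref{thm:decompX}, and then to bound the coefficients of the remainder function using the regularity of its $l$-th derivative.

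\emph{Elementary terms.} Since $\al > \be > 1$ by Theorem \ref{thm:decompX}, inside $\D$ one has the geometric expansions $a/(1+\al s) = a\sum_{n \geq 0}(-1)^n\al^n s^n$ and $b/(1-\be s) = b\sum_{n \geq 0}\be^n s^n$, contributing $a(-1)^n\al^n$ and $b\be^n$ to $x_n$; when $H'(1) \leq 2$ one simply has $b = 0$ and the point $\be^{-1}$ is absent. The binomial series gives $1/(1-s)^j = \sum_{n \geq 0}\binom{n+j-1}{j-1}s^n$, and $\binom{n+j-1}{j-1} = (n+1)(n+2)\cdots(n+j-1)/(j-1)!$ is a polynomial in $n$ of degree $j-1$ with leading coefficient $1/(j-1)!$. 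Hence $\sum_{j=1}^{r} c_j/(1-s)^j$ contributes to $x_n$ the value $p_{r-1}(n) := \sum_{j=1}^{r} c_j\binom{n+j-1}{j-1}$, where $p_{r-1}(s) \in \R[s]$ has degree exactly $r-1$: its top coefficient is $c_r/(r-1)!$, and $c_r \neq 0$ by Corollary \ref{cor:cfs} (for $r = 1$, $c_1 = 1/(2-H'(1)) \neq 0$; for $r = 2$, $c_2 = 2/(H''(1)-2) \neq 0$ since $H''(1) \neq 2$).

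\emph{The remainder.} Write $f(s) = \sum_{n \geq 0}f_n s^n$ for the Taylor expansion of $f$, legitimate since $f$ is holomorphic in $\D$. Combining the three contributions above with \eqref{eq:decompX} gives $x_n = a(-1)^n\al^n + b\be^n + p_{r-1}(n) + f_n$, which is \eqref{eq:xexp}; it remains to prove $f_n = o(n^{-l})$ as $n \to \infty$. Differentiating the power series $l$ times, the $m$-th Taylor coefficient of $f^{(l)}$ equals $\tfrac{(m+l)!}{m!}f_{m+l}$. By Theorem \ref{thm:decompX}, $f^{(l)}$ is holomorphic in $\D$ and continuous on $\overline{\D}$ (for $l = 0$ this is $f$ itself); in particular its boundary values $\theta \mapsto f^{(l)}(e^{{\bf i}\theta})$ form a continuous, hence square-integrable, function on $\pD$. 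Applying Cauchy's formula on circles $\abs{s} = \rho < 1$ and letting $\rho \to 1^{-}$, where the limit may be taken under the integral by uniform continuity of $f^{(l)}$ on the compact set $\overline{\D}$, one sees that the Taylor coefficients of $f^{(l)}$ coincide with the Fourier coefficients of its boundary function. By Parseval's identity these are square-summable, hence tend to $0$; thus $\tfrac{(m+l)!}{m!}f_{m+l} \to 0$, and since $(m+l)!/m! \sim m^l$ we get $f_{m+l} = o(m^{-l})$, i.e. $f_n = o(n^{-l})$.

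The only nontrivial step is the last one: showing that the Taylor coefficients of a function holomorphic in $\D$ and continuous on $\overline{\D}$ are square-summable (equivalently, that they are the Fourier coefficients of the boundary function and so vanish at infinity). Everything else is bookkeeping with geometric and binomial series. A mild point of care is the passage to the radial limit $\rho \to 1^-$ inside the Cauchy integral, which is legitimate precisely because $f^{(l)}$ is uniformly continuous on $\overline{\D}$.
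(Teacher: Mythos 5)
Your proof is correct and follows essentially the same route as the paper: expand the partial fractions from Theorem \ref{thm:decompX} into power series, combine to obtain \eqref{eq:xexp}, and identify $\tfrac{(m+l)!}{m!}f_{m+l}$ with the $m$-th Fourier coefficient of the boundary function of $F^{(l)}$. The only difference is cosmetic: the paper closes with the Riemann--Lebesgue lemma, while you invoke Parseval (really Bessel's inequality) for the continuous, hence $L^2$, boundary function, and both give precisely the required decay to zero.
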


\begin{proof}[Proof of Theorem \ref{thm:asymtx}.]
The power series expansion at $s=0$ of the terms that appear in the decomposition equation \eqref{eq:decompX} of Theorem \ref{thm:decompX} are
\[
\frac{1}{1+\al s}=\sum_{n=0}^\infty (-1)^n\al^n s^n, \qquad \frac{1}{1-\be s}=\sum_{n=0}^\infty \be^n s^n,
\]
\[
\frac{1}{1-s} = \sum_{n=0}^{\infty}s^n, \qquad \frac{1}{(1-s)^2} = \sum_{n=0}^{\infty}(n+1)s^n,
\]
\[
f(s) = \sum_{n=0}^\infty f_n s^n.
\]
Hence,
\[
X(s) = \sum_{n=0}^\infty (-(1)^n a\al^n + b\be^n + p_{r-1}(n)+f_n)s^n, 
\]
where
\begin{equation}\label{eq:poly}
p_{r-1}(n) := c_1 + c_2(n+1) = (c_1+c_2)+c_2n.
\end{equation}
This proves \eqref{eq:xexp}. It remains to estimate the vanishing rate of the coefficients $f_n$. By Theorem \ref{thm:decompX}, the $l$-th derivative $F^{(l)}(s)$ is holomorphic inside $\D$ and continuous in $\overline\D$. The coefficient of $s^{n-l}$, $n \geq l$, in the Taylor series of $F^{(l)}(s)$ at $s=0$ is $n!/(n-l)!f_n$. On the other hand, Cauchy's integral formula \cite[Ch.5, 1.11]{Conway} yields
\begin{equation}\label{eq:CauchiInt}
\frac{n!}{(n-l)!}f_n = \frac{1}{2\pi {\bf i}} \int_{\pD} \frac{F^{(l)}(s)}{s^{n-l+1}}\, ds = \left[\begin{array}{rcl}
s  &=& e^{2\pi {\bf i} \theta}, \quad \\
ds &=& 2\pi {\bf i} e^{2\pi {\bf i}\theta}\,d\theta,\\
\theta &\in& [0, 1]
\end{array}\right]
\end{equation}
\begin{equation}\label{eq:FourierInt}
    = \int_0^1 F^{(l)}\left(e^{2\pi {\bf i}\theta}\right)e^{-2\pi {\bf i} (n-l)\theta}\,d\theta.
\end{equation}
The last integral gives $(n-l)$'th coefficient of the Fourier series for $F^{(l)}\left(e^{2\pi {\bf i}\theta}\right)$. As $F^{(l)}(s)$ is continuous on $\pD$, it follows that $F^{(l)}\left(e^{2\pi {\bf i}\theta}\right) \in C[0, 1]$, and, by Riemann-Lebesgue Lemma \cite[Theorem 2.8, p.13]{Katznelson}, $n!f_n/(n-l)! \to 0$ as $n \to +\infty$. Therefore, $f_j=o\left(n^{-l}\right)$.
\end{proof}

\begin{remark}
If the $f(s)$ admits holomorphic continuation outside the circle of radius $\rho >1$, centered at $s=0$, then $o\left(n^{-l}\right)$ in \eqref{thm:asymtx} can be strengthened to $O\left(\rho^{-n}\right)$. 
\end{remark}

\begin{remark}
The weakest condition that ensures $f_n=o(n^{-l})$, as $n \to +\infty$ is that of $F^{(l-1)}\left(e^{2\pi i\theta}\right)$ being absolutely continuous on $[0, 1]$. However, there seems to be no easy ways to re-cast this condition in terms of the p.g.f. $H(s)$.
\end{remark}

\begin{remark}
If $\abs{F^{(l)}(s)} \leq M < +\infty$ on $\pD$, where $M$ can be estimated numerically, then a slightly weaker estimate $\abs{f_n} \leq M(n-l)!/n!=O\left(n^{-l}\right)$ that follows from \eqref{eq:CauchiInt} and \eqref{eq:FourierInt} might be much more useful for numerical computations.  
\end{remark}

\begin{theorem}\label{thm:asymtD_n}
Let $H(s)-s^2$ be primitive, with $H^{(2r)}(1) < +\infty$. If its vanishing order at $s=1$ is $r$, then, for $n > n_0$,
\[
1 < D_{2n} < D_{2n+2}, \qquad D_{2n+3} < D_{2n+1} <-1, 
\]
\[
 D_{2n} \to +\infty, \qquad D_{2n+1} \to -\infty,
\]
as $n \to +\infty$.
\end{theorem}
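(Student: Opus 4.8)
The plan is to substitute the sharp asymptotics of $x_n$ into the Hankel form of $D_n$. Since $H^{(2r)}(1) < +\infty$, Theorem \ref{thm:asymtx} applies with $l=0$, giving
\[
x_n = a(-1)^n\al^n + g_n,\qquad g_n := b\be^n + p_{r-1}(n) + f_n,
\]
with $\al > \be > 1$, $\deg p_{r-1} = r-1$, $f_n = o(1)$, and $a,b,c_1,c_2$ as in Corollary \ref{cor:cfs}. Feeding this into $D_n = h_0(x_nx_{n+2} - x_{n+1}^2)$ from \eqref{eq:Dx}, and using that the alternating--geometric leading part is annihilated by the second--order Hankel operator, namely $(-1)^n\al^n\cdot(-1)^{n+2}\al^{n+2} - \bigl((-1)^{n+1}\al^{n+1}\bigr)^2 = 0$, one obtains the exact identity
\[
\frac{D_n}{h_0} = a(-1)^n\al^n M_n + \bigl(g_ng_{n+2} - g_{n+1}^2\bigr),\qquad M_n := \al^2 g_n + 2\al g_{n+1} + g_{n+2}.
\]
Thus the behaviour of $D_n$ is governed by $(-1)^n\al^n M_n$, once the ``secondary Hankel term'' $g_ng_{n+2} - g_{n+1}^2$ is shown to be of strictly smaller order.

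The heart of the argument is the analysis of $M_n$ and the signs of the relevant coefficients. A direct computation from \eqref{eq:poly} gives
\[
M_n = b(\al+\be)^2\be^n + (\al+1)^2 p_{r-1}(n) + 2c_2(\al+1) + o(1),
\]
so that, by Corollary \ref{cor:m2zerosH}, there are three disjoint regimes keyed to $H'(1) = \E{Z}$: (i) $H'(1) > 2$, where $r = 1$, $c_2 = 0$, $b \ne 0$, and $M_n \sim b(\al+\be)^2\be^n$; (ii) $H'(1) = 2$, where $r = 2$, $b = 0$, and $M_n \sim (\al+1)^2 c_2\, n$; (iii) $H'(1) < 2$, where $r = 1$, $b = c_2 = 0$, and $M_n \to (\al+1)^2 c_1$. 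In each case I claim $M_n > 0$ for all large $n$ and $\al^n M_n \to +\infty$, which needs $a > 0$ and, respectively, $b > 0$, $c_2 > 0$, $c_1 > 0$. Now $a > 0$ is automatic: $x_{2n} \geq 1$ by Proposition \ref{prop:xn_mon} forces $a \geq 0$, while $a = 0$ is impossible since $-\al^{-1}$ is a genuine simple pole of $X(s) = H(s)/(H(s)-s^2)$ because $H(-\al^{-1}) = \al^{-2} \ne 0$. For the remaining signs I would use the factorization $H(s) - s^2 = (1-s)G(s)$ of Lemma \ref{lem:divideH}: from $G(0) = h_0 > 0$, $G(1) = 2 - H'(1)$, and $G(-1) < 0$ (the last from $\abs{H(-1)} < 1$, Corollary \ref{cor:1val1H}), $G$ must be increasing through its simple zero $-\al^{-1}$ and decreasing through $\be^{-1}$ whenever the latter is present; this fixes the sign of $(H(s)-s^2)'$ at those points, which by the formulas of Corollary \ref{cor:cfs} yields $b > 0$ and $c_1 > 0$ in regime (iii). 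Finally, in regime (ii), $\E{Z} = 2$ and the non--degeneracy of $Z$ (forced by $h_0 > 0$ together with primitivity) give $H''(1) = \E{Z^2} - 2 > 2$, finite by $H^{(4)}(1) < +\infty$, so $c_2 = 2/(H''(1)-2) > 0$.

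It remains to finish by comparing orders of magnitude. The secondary term $g_ng_{n+2} - g_{n+1}^2$ is again a Hankel expression in $g_n$ whose $b\be^n$--part cancels, so it is $O(\be^n)$ in regime (i) and $O(n)$ in regimes (ii)--(iii); in every case this is $o(\al^n M_n)$ because $\al > \be$ and $\al > 1$. Hence $D_n = h_0 a(-1)^n\al^n M_n(1 + o(1))$ with $a M_n > 0$ eventually and $\al^n M_n \to +\infty$; this gives $D_{2n} \to +\infty$, $D_{2n+1} \to -\infty$, and in particular $D_{2n} > 1$, $D_{2n+1} < -1$ for all $n > n_0$. For the monotonicity, writing $S_m := g_mg_{m+2} - g_{m+1}^2$, I would examine the increments
\[
D_{2n+2} - D_{2n} = h_0 a\,\al^{2n}\bigl(\al^2 M_{2n+2} - M_{2n}\bigr) + h_0\bigl(S_{2n+2} - S_{2n}\bigr),
\]
and, symmetrically, $D_{2n+1} - D_{2n+3} = h_0 a\,\al^{2n+1}\bigl(\al^2 M_{2n+3} - M_{2n+1}\bigr) + h_0\bigl(S_{2n+1} - S_{2n+3}\bigr)$. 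From the explicit forms of $M_n$ one checks that $\al^2 M_{m+2} - M_m$ is eventually positive and tends to $+\infty$ in each regime (of order $\be^{2n}$, of order $n$, or a positive constant, respectively), so it dominates the lower--order increments of $S_m$; hence both increments are positive for $n > n_0$, that is, $D_{2n} < D_{2n+2}$ and $D_{2n+3} < D_{2n+1}$.

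The main obstacle is the leading--term cancellation in the $2\times2$ Hankel determinant: the dominant term $a(-1)^n\al^n$ of $x_n$ alone carries no information about $D_n$, which is why the second--order expansion of Theorem \ref{thm:asymtx} and the exact residue values of Corollary \ref{cor:cfs} are indispensable, and why the delicate point is establishing the positivity of $a,b,c_1,c_2$ --- in effect, tracking the monotonicity of $G(s) = (H(s)-s^2)/(1-s)$ at its real zeros $-\al^{-1}$ and $\be^{-1}$. Once the sign of the next--order coefficient $M_n$ is secured, the monotonicity and divergence of $D_n$ are a matter of magnitude estimates.
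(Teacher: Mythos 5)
Your proposal is correct and follows the same overall strategy as the paper's proof: substitute the two-term asymptotics of $x_n$ from Theorem~\ref{thm:asymtx} into the Hankel form $D_n = h_0(x_nx_{n+2}-x_{n+1}^2)$, observe that the dominant alternating--geometric part $a(-1)^n\al^n$ cancels identically in the Hankel operator, and read off the next-order term. Your exact decomposition $D_n/h_0 = a(-1)^n\al^n M_n + (g_ng_{n+2}-g_{n+1}^2)$ with $M_n=\al^2 g_n+2\al g_{n+1}+g_{n+2}$ is a cleaner way of writing the same computation the paper does only asymptotically; the three regimes you identify coincide with the paper's case split on $H'(1) \gtrless 2$ and the resulting asymptotics $D_n \sim (-1)^n h_0 a c_r(1+\al)^2 n^{r-1}\al^n$ (regimes (ii)/(iii)) and $D_n \sim (-1)^n h_0 ab(\al+\be)^2(\al\be)^n$ (regime (i)).

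The genuine added value of your argument is the sign analysis. The paper jumps from these asymptotic equivalences straight to $D_{2n}\to+\infty$, $D_{2n+1}\to-\infty$, which implicitly requires $ac_r>0$ (resp.\ $ab>0$), but it never verifies those signs. You supply that verification: $a>0$ from positivity of $x_{2n}\geq 1$ plus the non-removability of the simple pole at $-\al^{-1}$ (equivalently, from $(H-s^2)'(-\al^{-1})>0$ via the sign change of $G=(H(s)-s^2)/(1-s)$); $b>0$ from $G(0)>0>G(1)$ forcing $G'(\be^{-1})<0$; $c_1=1/(2-H'(1))>0$ immediately when $H'(1)<2$; and $c_2=2/(H''(1)-2)>0$ from $\operatorname{Var}(Z)>0$ (forced by $h_0>0$ and $\E Z=2$). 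These signs are exactly what is needed to pin down the direction in which $D_{2n}$ and $D_{2n+1}$ diverge, and your order-of-magnitude comparison of $\al^2 M_{m+2}-M_m$ against the increments of the secondary Hankel term then yields the strict monotonicity $D_{2n}<D_{2n+2}$, $D_{2n+3}<D_{2n+1}$ in the same way the paper deduces it from $\lim D_{n+2}/D_n \in\{\al^2,\al^2\be^2\}>1$. In short: same mechanism as the paper, but you close a small gap by explicitly proving the positivity of $a$, $b$, $c_1$, $c_2$ rather than assuming it.
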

\begin{proof}[Proof of Theorem \ref{thm:asymtD_n}.]
If $H'(1) \leq 2$, then $b=0$, $r=1$ or $2$. By Theorem \ref{thm:asymtx}, with $l=0$,
\[
x_n=(-1)^n a \al^n + p_{r-1}(n) + o(1).
\]
 Then,
\begin{align*}
x_n x_{n+2}&= a^2\al^{2n+2} + (-1)^n a \al^np_{r-1}(n+2) + (-1)^{n+2}a \al^{n+2}p_{r-1}(n)+ o(\al^n),\\
x_{n+1}^2 &= a^2\al^{2n+2} +(-1)^{n+1}2a\al^{n+1}p_{r-1}(n+1) + o(\al^n),
\end{align*}
since $\al > 1$.
Therefore,
\begin{align*}
D_n &= h_0\left(x_n x_{n+2} - x_{n+1}^2\right)\\
&=(-1)^n h_0a\al^n\left(p_{r-1}(n+2)+2\al p_{r-1}(n+1)+\al^2 p_{r-1}(n)\right) + o(\al^n). 
\end{align*}
From \eqref{eq:poly}, the leading term of $p_{r-1}(n+2)+2\al p_{r-1}(n+1)+\al^2 p_{r-1}(n)$ is equal to $c_r(1+\al)^2n^{r-1}$.
Therefore
\begin{equation}\label{eq:asympDn1}
D_n \sim (-1)^n h_0 a c_r (1+\al)^2 n^{r-1} \al^n, \text{ as } n \to \infty.
\end{equation}
Likewise, for $H'(1) > 2$, $b \ne 0$, $r=1$, Theorem \ref{thm:asymtx} (with $l=0$) yields
\[
x_n=(-1)^n a \al^n + b\be^n + c + o(1).
\]
Thus,
\[
x_n x_{n+2}= a^2\al^{2n+2} + (-1)^n a b \al^n\be^n(\be^2 + \al^2)+ b^2\be^{2n+2} + o(\al^n),
\]
and
\[
x_{n+1}^2 = a^2\al^{2n+2} +(-1)^{n+1}2ab\al^{n+1}\be^{n+1} + b^2\be^{2n+2} + o(\al^n),
\]
as $\al>\be>1$. Hence,
\[
D_n = h_0\left(x_n x_{n+2} - x_{n+1}^2\right) = (-1)^n h_0 ab\al^n\be^n (\al+\be)^2 + o(\al^n). 
\]
As $\al>1$ and $\be>1$, $D_{2n} \to +\infty$, $D_{2n+1} \to -\infty$, as $n \to \infty$, and,
\[
\lim_{n \to \infty} D_{n+2}/D_n = \begin{cases}
                                    \al^2, & \text{ if } H'(1) \leq 2,\\
                                    \al^2\be^2, & \text{ if } H'(1) > 2.\\
                                \end{cases}
\]
Hence, there exists $n_0 \in \N$, such that, for every $n > n_0$, $D_{2n}\geq1$, $D_{2n+1}\leq-1$ and $\abs{D_{n+2}} > \abs{D_n}$.
\end{proof}

\begin{remark}\label{rem:Dn}
The expression $x_n x_{n+2}-x_{n+1}^2$ from $D_n$ takes part in Aitken's--$\Delta^2$ convergence acceleration method \cite{Aitken, Pomeranz}, while the ratio $D_{n+1}/D_n$ is used as the numerical estimator for the radius of convergence of power series \cite{Mercer}.
\end{remark}

\begin{proof}[Proof of Theorem \ref{thm:main_res}.]
    As derivatives $H^{(j)}(1)$ can be expressed via the moments $\E{Z^i}$, $0 \leq i \leq j \leq 2r$ (and vice versa), like $H'(1)=\E{Z}$, $H''(1)=\E{Z^2}-\E{Z}$, Theorem \ref{thm:main_res} is just a re--statement of Theorem \ref{thm:asymtD_n}. 
\end{proof}

\begin{proof}[Proof of Corollary \ref{cor:ivp}.]
    Let us first consider the case $\mathbb{P}{\left(Z \in 2\N_0+1\right)} = 0$. This means that every r.v. $Z_i$ in $W(n)$ takes only even values with probability $1$. Consequently, for integer $u \geq 1$, $\vphi(0)=\vphi_1(0)$, $\vphi(1)=\vphi(2\cdot1 -1) = \vphi_1(1)$, where $\vphi_1(u)$ denotes the ultimate survival probability of the process $W_1(n)$ described in Section \ref{sec:genfun}. By replacing $Z$ with $Z/2$ in the well known (see, for instance \cite{Asmussen}) ultimate survival probability formula for $W_1(n)$ in $\ka=1$ case, one obtains $\vphi(0)=1-\E{Z}/2$, since $\E{Z/2} < 1$. Then, using recursion \eqref{eq:main} for $\vphi_1(n)$ with $\ka=1$, one obtains $\vphi(1)$, as claimed.
    
    Let us now consider the case $\mathbb{P}{\left(Z \in 2\N_0+1\right)} \ne 0$. Then $H(s)-s^2$ must be primitive. For the primitive case, $H'(1)=\E{Z} < 2$ results in the vanishing order $r=1$ at $s=1$ for $H(s)-s^2$, and $EZ^2 < +\infty$ yields $H''(1) < +\infty$. Therefore, Theorem \ref{thm:asymtx} and Theorem \ref{thm:asymtD_n} are applicable (with $r=1$ and $l=0$). Then
    \[
    x_n \sim (-1)^n a \al^n, \qquad D_n \sim (-1)^n h_0 a c_1(1+\al)^2\al^{n},
    \] as $n \to \infty$ by Theorem \ref{thm:asymtx} and by Eq. \eqref{eq:asympDn1} in the proof of Theorem \ref{thm:asymtD_n}. Since $y_n=h_0x_{n+1}$ holds by Eq. \eqref{eq:yfromx}, the limits of ratios $x_n/D_n$, $x_{n+1}/D_n$, $y_n/D_n$, $y_{n+1}/D_n$ are
    \[
        \frac{1}{h_0c_1(1+\al)^2}, \quad -\frac{\al}{h_0c_1(1+\al)^2}, \quad -\frac{\al}{c_1(1+\al)^2}, \quad \frac{\al^2}{c_1(1+\al)^2},
    \] respectively. Substituting these limits into expressions in \eqref{eq:ivp} and using $c_1=1/(2-H'(1))=1/(2-\E{Z})$ from Corollary \ref{cor:cfs}, $\vphi(\infty)=1$ for $\E{Z}<2$, one obtains the claimed formulas for $\vphi(0)$ and $\vphi(1)$.
\end{proof}

\section{Generating function for survival probabilities}\label{sec:alt}

Let $S_0 := 0$, $S_n = \sum_{i=1}^n(Z_i - \ka)$ for $n \geq 1$, and set $M := \max_{n\geq 1} S_n$. Recall that the positive part of a real number $a^+ := \max\{0, a\}$. We recall the classical stationarity property for the distribution of the maximum of a reflected random walk \cite[Ch. VI, sec. 9]{Feller}:

\begin{lemma}\label{lem:stat}
The r.v.s. $(M+Z-\ka)^+$ and $M^+$ are distributed identically.
\end{lemma}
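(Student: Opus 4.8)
The statement is the classical Lindley-type stationarity identity for the all-time maximum of a random walk with negative drift (here the increments are $Z_i-\kappa$, and $\E(Z-\kappa)<0$ is what will make $M$ a.s.\ finite under the moment hypotheses carried through the paper). The plan is to verify the distributional identity $(M+Z-\kappa)^+ \dist M^+$ by conditioning on the first step of the walk and using the shift-invariance of the i.i.d.\ sequence $(Z_i)_{i\ge 1}$. First I would record that $M=\max_{n\ge 1}S_n = \max\{0,\max_{n\ge 1}S_n\}$ up to the value $M^+$, so $M^+ = \bigl(\max_{n\ge 0}S_n\bigr)$ with the convention $S_0=0$; equivalently $M^+ = \max\{0, S_1 + \max_{n\ge 1}(S_n-S_1)\}$. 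The key observation is that $\widetilde M := \max_{n\ge 1}(S_n - S_1) = \max_{n\ge 0}(S_{n+1}-S_1)$ has, by the i.i.d.\ property, the same law as $\max_{n\ge 0} S_n = M^+$, and moreover $\widetilde M$ is independent of $S_1 = Z_1-\kappa$.

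\textbf{Key steps.} Writing $Z$ for an independent copy of $Z_1$, one then gets
\[
M^+ \;=\; \max\{0,\ (Z-\kappa) + \widetilde M\} \;=\; \bigl((Z-\kappa)+\widetilde M\bigr)^+,
\]
and since $\widetilde M \dist M^+$ and $\widetilde M$ is independent of $Z$, the right-hand side has the same distribution as $\bigl((Z-\kappa)+M^+\bigr)^+$. It remains to replace $M^+$ inside by $M$: because $a^+ + b^+$-type truncations interact well here, note that for any real $m$ one has $\bigl((Z-\kappa)+m^+\bigr)^+ = \bigl((Z-\kappa)+m\bigr)^+$ whenever this is needed — more precisely, I would argue that $M^+$ and $M$ can be used interchangeably inside the outer positive part, since on the event $\{M<0\}$ we have $M^+=0$ and one checks the resulting contribution matches (this uses that $M\ge S_1$ so $\{M<0\}\subseteq\{Z_1-\kappa<0\}$, and a short case analysis, or simply the standard fact that $\max_{n\ge 1}S_n$ already incorporates the empty-shift term). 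This yields $(M+Z-\kappa)^+ \dist M^+$, as claimed.

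\textbf{Main obstacle.} The only genuinely delicate point is the bookkeeping around the truncations: one must be careful that $M=\max_{n\ge 1}S_n$ (strict index $n\ge 1$, no $S_0$ term) rather than $\max_{n\ge 0}S_n$, so the recursive unfolding naturally produces $M^+$, not $M$, on the inside, and the reduction from $M^+$ to $M$ inside the outer $(\cdot)^+$ needs the elementary inequality $M\ge S_1 = Z_1-\kappa$ together with a case split on the sign of $M$. I would also need $M<\infty$ almost surely for the identity to be non-vacuous, which follows from the negative drift $\E Z<\kappa$ (guaranteed in the regime $\E Z<2=\kappa$ that is relevant for nonzero survival probabilities) via the strong law of large numbers, exactly as invoked in \cite[Lemma 1]{GS}. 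Everything else is a routine conditioning argument, and I would present it in two or three lines once the truncation identities are set up.
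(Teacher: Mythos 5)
Your core argument --- writing $M^+=\bigl((Z_1-\kappa)+\widetilde M\bigr)^+$ with $\widetilde M:=\max_{n\ge 1}(S_n-S_1)$, noting that $\widetilde M$ has the law of $M^+$ and is independent of $Z_1$ by the i.i.d.\ property, and concluding $(M^+ +Z-\kappa)^+\dist M^+$ --- is exactly the paper's own proof of this lemma, and that part is correct.

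The step you append afterwards, passing from $M^+$ to $M$ inside the outer positive part, is a genuine error and should be deleted. The identity $\bigl((Z-\kappa)+m^+\bigr)^+=\bigl((Z-\kappa)+m\bigr)^+$ you invoke fails precisely when $m<0$ and $Z>\kappa$: the left side then equals $Z-\kappa>0$ while the right side is $(m+Z-\kappa)^+<Z-\kappa$. Both conditions occur with positive probability in general under negative drift (for instance for geometric $Z$ one has $\mathbb{P}(Z>\kappa)>0$, and $\mathbb{P}(M<0)>0$ always: on $\{Z_1=0\}$ intersected with the event that the shifted walk has non-positive maximum, $M\leq -\kappa<0$, and this has probability at least $h_0\,\mathbb{P}(M^+=0)>0$). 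Hence $(M+Z-\kappa)^+\le(M^++Z-\kappa)^+$ holds everywhere with strict inequality on a set of positive measure, the expectations differ, and the two cannot be equal in distribution. The inequality $M\ge S_1=Z_1-\kappa$ you lean on gives no help either, because the $Z$ appearing in $(M+Z-\kappa)^+$ is a fresh independent copy, not $Z_1$, so no pointwise relation between $M$ and $Z-\kappa$ is available. The displayed statement of the lemma in the paper does say $M$ rather than $M^+$, but this is a misprint in the statement only: the paper's proof manipulates $(M^+ +Z-\kappa)^+$ from its very first line, and in the proof of Theorem~\ref{thm:Phi} the lemma is invoked in the form $\E{\bigl((M^++Z-2)^+-M^+\bigr)}=0$. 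The $M^+$ version is what is both true and needed, and it is exactly what your shift-invariance argument already establishes.
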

\begin{proof}[Proof of Lemma \ref{lem:stat}.]
One has
\begin{align*}
&(M^+ +Z-\ka)^+ \dist \max\{0, M^+ +Z-\ka\} \dist \max\{0, \max\{0, \max_{n \geq 1} S_n\}+Z-\ka\}\\
&\dist\max\{0, \max\{Z-\ka, \max_{n \geq 1}(S_n+Z-\ka)\}\} \dist \max\{0, \max_{n \geq 1}S_n\} \dist M^+.
\end{align*}
\end{proof}

One has $W(n) = u + \ka n - \sum_{i=1}^n Z_i = u-S_n$, and
\[
\vphi(u) = \mathbb{P}{\left(\bigcap_{n=1}^\infty \{W(n) > 0\}\right)} = \mathbb{P}{\left(\bigcap_{n=1}^\infty \{S(n)<u\}\right)} = \mathbb{P}{\left( M < u \right)}.
\]
For $i \in \N_0$, let $\pi_i := \mathbb{P}{(M^+=i)}$. Then $\vphi(u+1)=\sum_{i=0}^{u}\pi_i,\,u\in\mathbb{N}_0$ and, in particular, $\vphi(1)=\pi_0$. Recall that $H(s)=\E{s^Z}$ is the p.g.f. of $Z$ and denote the p.g.f. of $M^+$ by $G(s)=\E{s^{M^+}}$. 

\begin{proof}[Proof of Theorem \ref{thm:Phi}.]
Since $\E{Z^2} < +\infty$, $\E{M^+} < +\infty$, see \cite[Theorems 5 and 6]{Kiefer}. As $(M+Z-2)^+$ and $M^+$ are identically distributed, we have $\E{(M^+ +Z-2)}=\E{M^+}$, or
\[
\E{\left( (M^++Z-2)^+ - M^+\right)} = 0.
\]
On the other hand, by the Law of Total Expectation,
\begin{align*}
&\E{\left((M^++Z-2)^+ - M^+\right)} = \E{\left(\E{((M^++Z-2)^+ - M^+)}\mid M^+\right)}\\
&= \pi_0\E{(Z-2)^+} + \pi_1(\E{(Z-1)^+} -1) + (1-\pi_0-\pi_1)\E{\left(Z-2\right)}\\
&= \pi_0(\E{(Z-2)}+2h_0+h_1) + \pi_1\left(\E{\left(Z-2\right)}+h_0\right) + (1-\pi_0-\pi_1)\E{\left(Z-2\right)}\\
&=\E{(Z-2)}+ \pi_0(2h_0 + h_1) + \pi_1h_0.
\end{align*}
Therefore,
\begin{equation}\label{eq:cond1}
    (2h_0 + h_1)\pi_0 + h_0\pi_1 = 2 - \E{Z}.
\end{equation}
Similarly,
\[
\E{s^{(M^+ +Z-2)^+}} = \E{s^{M^+}} \text{ or } \E{\left(s^{M^+ +Z-2}-s^{M^+}\right)}=0.
\]
By conditioning on $M^+$ again, we obtain
\begin{align*}
&\E{\left(s^{M^+ +Z-2}-s^{M^+}\right)} = \E{\left(\E{\left(s^{M^+ +Z-2}-s^{M^+}\right)} \mid M^+\right)}\\
&= \pi_0\E{\left(s^{(Z-2)^+}- 1\right)} + \pi_1\E{\left(s^{(Z-1)^+} - s\right)} + \sum_{i=2}^\infty \pi_i \E{\left(s^{i+Z-2}-s^i\right)}\\
&=\pi_0\E{\left(s^{Z-2} + h_0(1-s^{-2}) + h_1(1-s^{-1}) - 1\right)}\\
&+ \pi_1\E{\left(s^{Z-1} +h_0(1-s^{-1}) - s\right)} +  \sum_{i=2}^{\infty} \pi_i \E{\left(s^{i+Z-2}-s^i\right)}\\
&= H(s)\left(\pi_0s^{-2} + \pi_1s^{-1} + \sum_{i=2}^\infty\pi_i s^{i-2}\right) - \sum_{i=2}^{\infty} \pi_i s^i\\
&+ \pi_0(h_0(1-s^{-2}) + h_1(1-s^{-1}) - 1) + \pi_1(h_0(1-s^{-1}) - s)\\
&=s^{-2}H(s)\left(\sum_{i=0}^\infty\pi_i s^i\right) - \sum_{i=0}^{\infty}\pi_is^i + (1-s^{-1})\left(\pi_0h_0(1+s^{-1}) + \pi_0h_1 + \pi_1h_0\right)\\
&=  G(s)(s^{-2}H(s)-1) + (1-s^{-1})\left(\pi_0h_0(1+s^{-1}) + \pi_0h_1 + \pi_1h_0\right)=0
\end{align*}
or
\begin{equation}\label{eq:lygtis}
    G(s)(H(s)-s^2) = (1-s)\left(\pi_0h_0 + (\pi_0h_0+\pi_0h_1 + \pi_1h_0)s\right).
\end{equation}
As $G(s)$ converges absolutely in $\overline{\D}$, one is allowed to evaluate  \eqref{eq:lygtis} at $s=-\al^{-1}$, where $H(s)=s^2$. In doing so one obtains the second linear equation for $\pi_0$ and $\pi_1$
\begin{equation}\label{eq:cond2}
\pi_0(h_0+h_1-h_0\al) + \pi_1h_0=0.
\end{equation}

Combining the last equation with \eqref{eq:cond1}, one obtains, for $\E{Z} < 2$
\[
\pi_0 = \vphi(1) = \frac{2-\E{Z}}{h_0(1+\al)}, \qquad \pi_1 = \frac{(2-\E{Z})(\al-1 - h_1/h_0)}{h_0(1+\al)}.
\]
By \eqref{eq:lygtis} and the last expressions of $\pi_0$ and $\pi_1$,
\[
G(s) = \frac{(2-\E{Z})(1-s)(1+\al s)}{(1+\al)(H(s)-s^2)}.
\]
Then, the g.f. $\Xi(s)=\sum_{i=0}^{\infty}\vphi(i+1)s^i$ of probabilities $\vphi(u+1),\,u\in\mathbb{N}_0$ is 
\begin{equation}\label{eq:lygtis2}
\Xi(s) = \frac{G(s)}{1-s} = \frac{(2-\E{Z})(1+\al s)}{(1+\al)(H(s)-s^2)}.
\end{equation}
It should be noted that \eqref{eq:lygtis2} is valid only for $\E{Z} \leq 2$: for $\E{Z} > 2$, the right hand-side of \eqref{eq:cond1} becomes negative, which means $\pi_0$ or $\pi_1$ is negative; thus, $\Xi(s)$ could not be a g.f. of probabilities $\vphi(u+1),\,u\in\mathbb{N}_0$. For $\E{Z} > 2$, $\Xi(s)=0$, which we incorporate into \eqref{eq:lygtis2} by using $(2-\E{Z})^+$. As $\Xi(s)$  does not contain $\vphi(0)$, it must be solved from the recursion \eqref{eq:main}: $\vphi(0)=h_0\varphi(2)+h_1\varphi(1)= h_0(\pi_0 + \pi_1) + h_1\pi_0$ which confirms the result of Corollary \ref{cor:ivp}.
\end{proof}

\section{Some specific examples}\label{sec:examples}
\subsection{Bernoulli's distribution.} Let $Z \sim \BB(p)$ denote the Bernoulli r.v. with success probability $0 < p < 1$. Then its p.g.f. $H(s)=q+ps$, where $q=1-p$. In this case 
\begin{align*}
X(s)=\frac{q+ps}{q+ps-s^2}=\frac{1}{(1+q) (1-s)} + \frac{q}{(1 + q) (1+s/q)}.
\end{align*}
Then 
\[
x_n=\frac{1+(-1)^{n}q^{1-n}}{1+q}, \quad n \in \N_0.
\]
The determinant $D_n$ evaluates to
\begin{align*}
 D_n &= h_0(x_{n+2}x_n-x_{n+1}^2)\\
     &= \frac{q}{(1+q)^2}\left(\left(1+\frac{(-1)^{n}}{q^{-1+n}}\right)\left(1+\frac{(-1)^{n+2}}{q^{1+n}}\right)- \left(1+\frac{(-1)^{n+1}}{q^{n}}\right)^2\right)\\
     &= \frac{q}{(1+q)^2}\left(1+\frac{(-1)^{n}}{q^{-1+n}} + \frac{(-1)^{n}}{q^{1+n}} + \frac{1}{q^{2n}} - 1 -\frac{2(-1)^{n+1}}{q^{n}}-\frac{1}{q^{2n}}\right)\\
     &=\frac{(-1)^n q}{(1+q)^2}\left(\frac{(-1)^{n}}{q^{-1+n}} + \frac{(-1)^{n}}{q^{1+n}}-\frac{2(-1)^{n+1}}{q^{n}}\right)\\
     &=\frac{(-1)^n q^{1-n}}{(1+q)^2}\left(q + q^{-1} + 2\right)=\frac{(-1)^n q^{1-n}}{(1+q)^2}\frac{(1+q)^2}{q}= \frac{(-1)^n}{q^n}.
\end{align*}
Therefore, for $Z \sim \BB(p)$, Conjecture \ref{conj:Dn} is true.

\subsection{Geometric distribution}

Let $Z \sim \GG(p)$ denote the geometric distribution 
$\mathbb{P}(Z=k)=p(1-p)^k$, $k=0,\,1,\,\ldots$ with a p.g.f.
\[
H(s)=\frac{p}{1-qs}, \qquad 0 < p,\, q <  1, \; p+q=1
\]
and
\[
X(s)=\frac{1}{(1-s)(1+s-qp^{-1}s^2)}=\frac{1}{(1-s)(1+\al s)(1-\be s)}
\]
where
\[
\al = (\sqrt{4p^{-1}-3}+1)/2, \qquad \be = 
(\sqrt{4p^{-1}-3}-1)/2, 
\]
satisfy

\begin{equation}\label{eq:beta}
\be > 1, \text{ for } 0 < p < 1/3, \qquad 0 < \be < 1, \text{  for } 1/3 < p < 1,
\end{equation}
\[
\al > \max\{1, \be\}, \text{ for } 0 < p < 1.
\]

For $p \ne 1/3$, $X(s)$ decomposes into
\[
X(s) = \frac{a}{1+\al s} + \frac{b}{1-\be s} + \frac{c_1}{1-s},
\]
where
\[
a = \frac{q+\al}{3q+2\al}, \qquad b = \frac{q-\be}{3q-2\be}, \qquad c_1 =\frac{p}{3p-1}, 
\]
(see Corollary \ref{cor:cfs}) satisfy
\begin{align}\label{eq:bc}
    1/2 < b < +\infty,& \text{ if } p \in (0, 1/3), &  -\infty < b <0,& \text{ if } p \in (1/3, 1),\\
 0 > c_1 > -\infty,& \text{ if } p \in (0, 1/3), &  + \infty > c_1 > 1/2,& \text{ if } p \in (1/3, 1),\nonumber
\end{align}
\[
4/9 < a < 1/2, \text{ for } 0 < p < 1.
\]

It follows that
\[
x_n = (-1)^n a\al^n + b\be^n + c_1,
\]
and
\begin{align*}
&\frac{(-1)^n}{h_0} D_n =(-1)^n(x_n x_{n+2}-x_{n+1}^2)\\
&= ab(\al + \be)^2\al^n\be^n + (-1)^nbc_1(\be-1)^2\be^n + c_1 a(1 + \al)^2\al^n. 
\end{align*}

Consider
\begin{align*}
&(-1)^n/h_0\left(D_{n+2}-\al^2D_n\right)\\
&=ab(\al + \be)^2(\be^2-1)\al^{n+2}\be^n + (-1)^n bc_1(\be-1)^2(\be^2-\al^2)\be^n\\ 
&=ab(\al + \be)^2(\be^2-1)\be^n\left(\al^{n+2} + (-1)^n \frac{c_1(1-\be)(\al-\be)}{a(\al + \be)(\be+1)} \right)\\
&=ab(\al + \be)^2(\be^2-1)\be^n\left(\al^{n+2} + (-1)^n f(p) \right),
\end{align*}
where
\[
f(p) := \frac{c_1(1-\be)(\al-\be)}{a(1+\be)(\al + \be)}
=\frac{p^2(\al+p-2)}{(1-p)^3}.
\]
The last expression for $f(p)$ was found with \texttt{Mathematica} \cite{Mathematica} and verified with \texttt{Sage} \cite{Sage}. Since $a>0$ and the signs of $\be^2-1$ and $b$ in \eqref{eq:beta}, \eqref{eq:bc} coincide for $p \in (0, 1) \setminus \{1/3\}$, it follows that the sign of $(-1)^n(D_{n+2}-\al^2D_n)$ matches the sign of $\al^{n+2}+(-1)^nf(p)$. 

For $f(p)$ it holds that $\lim_{p\to 0^+ }f(p)=0$, $\lim_{p\to 1^-} f(p)=1$ and $f'(p)>0$ for $p\in(0,1)$, see Figure \ref{fig:pic}. Therefore, $f(p) < 1$ and $\al^{n+2} + (-1)^n f(p) > 0$ for $p \in (0, 1)$ due to $\al > 1$. Hence, for every $n \in \N_0$, it holds that $D_{2n+3} < \al^2 D_{2n+1}< D_{2n+1} \leq -1$ and $D_{2n+2} > \al^2 D_{2n+1}> D_{2n+1} \geq 1$. 

\begin{figure}[H]
    \centering
    \includegraphics[scale=0.33]{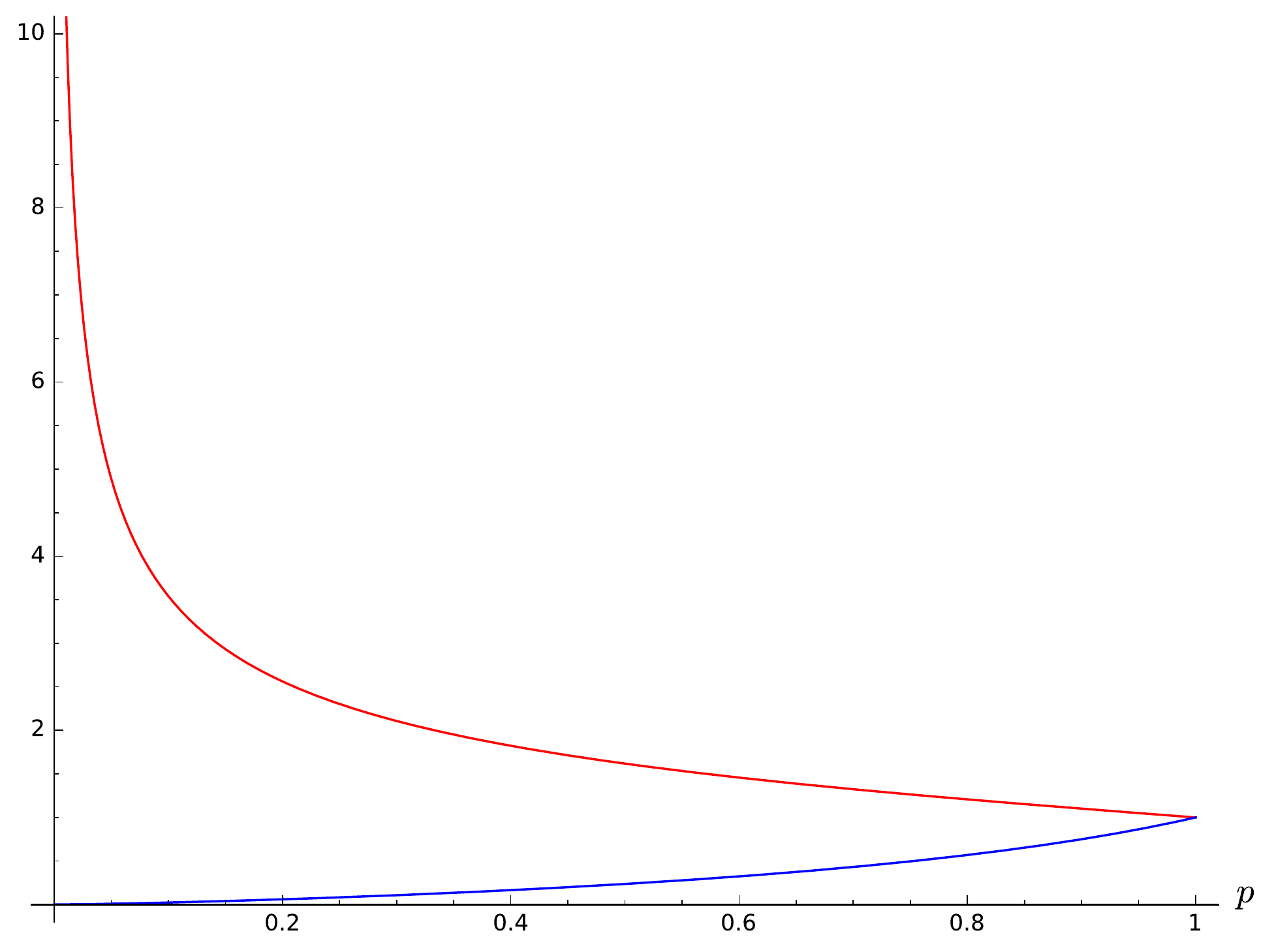}
    \caption{$\al=\al(p)$ (red) v.s. $f(p)$ (blue), $p \in (0, 1)$}
    \label{fig:pic}
\end{figure}

It remains to consider the value $p=1/3$. In this case,
\[
X(s)=\frac{1}{(1-s)(1+s-2s^2)}=\frac{1}{(1+2s)(1-s)^2}
\]
\[
=\frac{4/9}{1+2s} + \frac{2/9}{1-s} + \frac{1/3}{(1-s)^2}.
\]

This yields
\[
x_n = ((-2)^{n+2}+5+3n)/9, \qquad D_n = ((-2)^{n+2}(27n+63)-9)/81,
\]
\[
(-1)^n(D_{n+2}-D_n) = 2^{n+2}(n+5) > 0.
\]
In conclusion, Conjecture \ref{conj:Dn} is true for $Z \sim \GG(p)$. 

\section{Acknowledgments}\label{sec:ackn}

We thank anonymous reviewer for pointing out the derivation of the generating function $\Xi(s)$ for survival probabilities using the stationarity property for the maximum of the positive parts of sums of random variables which is outlined in Section \ref{sec:alt}. We also appreciate constructive criticism and revealed misprints/inaccuracies of another anonymous referee that helped to improve paper's overall quality and readability. Last but not least, we want to thank to professor Jonas Šiaulys for his comments on the manuscript.


\begin{thebibliography}{20}

\bibitem{Aitken} Aitken, A.: On Bernoulli's numerical solution of algebraic equations. Proceedings of the Royal Society of Edinburgh. {\bf 46}, 289--305 (1927). \url{http://dx.doi.org/10.1080/23311835.2017.1308622}

\bibitem{Andersen} Andersen, E.S.: On the collective theory of risk in case of contagion between the claims. Trans. Xvth Int. Actuar. {\bf 2}, 219--229 (1957)

\bibitem{Asmussen} Asmussen, S., Albrecher, H.: Ruin Probabilities. World Scientific: Singapore, (2010). \url{https://doi.org/10.1142/7431}

\bibitem{Conway} Conway, J.B.: Functions of one complex variable, 2nd ed. Springer--Verlag, New York, (1978)

\bibitem{Curt} Curtiss, D.R.: Recent Extensions of Descartes' Rule of Signs. Annals of Mathematics. {\bf 19} (4), 251--278  (1918)

\bibitem{DS} Damarackas, J., Šiaulys, J.: Bi-seasonal discrete time risk model, Appl. Math. Comput. {\bf 247}, 930--940 (2014). \url{https://doi.org/10.1016/j.amc.2014.09.040}

\bibitem{Dickson_Waters} Dickson, D.C.M., R. Waters, R.:, Recursive calculation of survival probabilities. ASTIN Bull. {\bf 21}, 199--221, (1991). \url{https://doi.org/10.2143/AST.21.2.2005364}

\bibitem{Feller} Feller, W.: An Introduction to Probability Theory and Its Applications. Vol. 2, 2nd ed., Wiley, New York, (1971)

\bibitem{Gantmacher} Gantmacher, F.R.: The theory of matrices. Vol. 1. translated by K. A. Hirsch, reprint of the 1959 translation. AMS Chelsea Publishing, Providence, RI, (1998)

\bibitem{Graham}Graham, E., van der Poorten, A., Shparlinsky I., Ward, T.: Recurrence sequences, Mathematical Surveys and Monographs, vol. 104, American Mathematical Society, Providence, RI, (2003). \url{https://doi.org/10.1090/surv/104}

\bibitem{Gerber}Gerber, H.U.: Mathematical fun with the compound binomial process. ASTIN Bull., {\bf 18}, 161--168, (1988). \url{https://doi.org/10.2143/AST.18.2.2014949}

\bibitem{Gerber1} Gerber, H.U.:, Mathematical fun with ruin theory. Insur. Math. Econ. {\bf 7}, 15--23, (1988). \url{https://doi.org/10.1016/0167-6687(88)90091-1}

\bibitem{GKS} Grigutis, A., Korvel, A., Šiaulys, J.: Ruin probability in the three-seasonal discrete-sime risk model, Mod. Stochastics: Theory Appl., {\bf 2}, 421--441, (2015). \url{https://doi.org/10.15559/15-VMSTA45}

\bibitem{GS1} Grigutis, A., Šiaulys, J.: Ultimate Time Survival Probability in Three-Risk Discrete Time Risk Model, Mathematics, {\bf 8} (2), 147--176, (2020). \url{https://doi.org/10.3390/math8020147}

\bibitem{GS} Grigutis, A., Šiaulys, J.: Recurrent Sequences Play for Survival Probability of Discrete Time Risk Model, Symmetry, {\bf 12} (12), 2111--2131, (2020). \url{https://doi.org/10.3390/sym12122111}

\bibitem{Katznelson}Katznelson, Y.: Introduction to Harmonic Analysis, 3rd Edition, Cambridge University Press, California, (2004). \url{https://doi.org/10.1017/CBO9781139165372}

\bibitem{Kiefer} Kiefer, J., Wolfowitz, J.: On the Characteristics of the General Queueing Process, with Applications to Random Walk, The Annals of Mathematical Statistics, {\bf 27}(1), 147–61, (1956). http://www.jstor.org/stable/2236981.

\bibitem{Mathematica} Mathematica (Version 9.0), Wolfram Research, Inc., Champaign, Illinois, 2012, \url{https://www.wolfram.com/mathematica}

\bibitem{Mercer} Mercer, G.N., Roberts, A.J.: A centre manifold description of contaminant dispersion in channels with varying flow properties. SIAM Journal on Applied Mathematics {\bf 50} (6), 1547--1565, (1990). \url{http://www.jstor.org/stable/2101904}

\bibitem{Pomeranz}Pomeranz, S.B.: Aitken’s $\Delta^2$ method extended, Cogent Mathematics, {\bf 4} (1) (2017). \url{http://dx.doi.org/10.1080/23311835.2017.1308622}

\bibitem{Rudin}Rudin, W.: Real and complex analysis, 3rd ed. McGraw-Hill, New York, (1987)

\bibitem{Sage} SageMath, the Sage Mathematics Software System (Version 8.1), The Sage Developers, 2017, \url{https://www.sagemath.org}

\bibitem{Shilov}Shilov, G.E.: Mathematical Analysis. Part 3: Functions in One Variable (in Russian). Nauka, Moscow, (1973)

\bibitem{Shiu} Shiu, E.S.W.: Calculation of the probability of eventual ruin by Beekman's convolution
series, Insur.~Math.~Econ., {\bf 7}, 41--47, (1988). \url{https://doi.org/10.1016/0167-6687(88)90095-9}

\end{thebibliography}

\end{document}